\theoremstyle{plain}
\newtheorem{thm}{Theorem}[section]
\newtheorem{lem}[thm]{Lemma}
\newtheorem{lemdef}[thm]{Lemma/Definition}
\newtheorem{pro}[thm]{Proposition}
\theoremstyle{definition}
\newtheorem{dfn}[thm]{Definition}
\newtheorem{rem}[thm]{Remark}
\DeclareMathOperator{\Hom}{Hom}
\DeclareMathOperator{\End}{End}
\DeclareMathOperator{\Ext}{Ext}
\DeclareMathOperator{\EXT}{Ext}
\DeclareMathOperator{\Rep}{R}
\DeclareMathOperator{\RRep}{\mathbf{R}}
\DeclareMathOperator{\Spec}{Spec}
\DeclareMathOperator{\modu}{mod}
\DeclareMathOperator{\Modu}{Mod}
\newcommand{\EKmod}{ \widehat{\mathcal{E}}}
\newcommand{\PKmod}{\widehat{\mathcal{P}} }
\newcommand{\QKmod}{ \widehat{\mathcal{Q}}}
\newcommand{\HKmod}{\widehat{\mathcal{H}}  }
\newcommand{\QPKmod}{\widehat{\mathcal{Q}/\mathcal{P}}  }
\newcommand{\HPKmod}{\widehat{\mathcal{H}/\mathcal{P}}  }
\newcommand{\RKmod}{ \widehat{\mathcal{R}}}
\newcommand{\SKmod}{ \widehat{\mathcal{S}}}
\newcommand{\RSKmod}{\widehat{\mathcal{R}/\mathcal{S}}  }
\newcommand{\eprime}{e}
\newcommand{\rp}{r}
\newcommand{\cp}{c}
\newcommand{\ellq}{\ell^\prime}
\newcommand{\rQ}{r^\prime}
\newcommand{\Gl}{\mathbf{Gl}}
\DeclareMathOperator{\id}{id}
\DeclareMathOperator{\idim}{inj.dim}
\DeclareMathOperator{\pdim}{proj.dim}
\DeclareMathOperator{\Dim}{\rm \underline{dim}}
\newcommand{\oTo}{\xymatrix{ \ar@{^{(}->}[r]|{\mathbf{O}}& }} % open immersion
\newcommand{\cTo}{\xymatrix{ \ar@{^{(}->}[r]|{\mathbf{|}}& }} % closed immersion
\newcommand{\coTo}{\xymatrix{ \ar@{^{(}->}[r]|{\mathbf{O}}|{\mathbf{|}}& }} %locally closed
\DeclareMathOperator{\surj}{\twoheadrightarrow }
\DeclareMathOperator{\Ker}{ker}
\DeclareMathOperator{\coKer}{coker}
\DeclareMathOperator{\Bild}{Im}
\newcommand{\ep}{\varepsilon}
\newcommand{\la}{\lambda}
\newcommand{\al}{\alpha}
\newcommand{\N}{\mathbb{N}}
\newcommand{\R}{\mathbb{R}}
\newcommand{\Z}{\mathbb{Z}}
\newcommand{\n}{\mathfrak{n}}
\newcommand{\m}{\mathfrak{m}}
\newcommand{\mcA}{\mathcal{A}}
\newcommand{\mcB}{\mathcal{B}}
\newcommand{\mcC}{\mathcal{C}}
\newcommand{\mcE}{\mathcal{E}}
\newcommand{\mcF}{\mathcal{F}}
\newcommand{\mcH}{\mathcal{H}}
\newcommand{\mcO}{\mathcal{O}}
\newcommand{\mcP}{\mathcal{P}}
\newcommand{\mcQ}{\mathcal{Q}}
\newcommand{\mcR}{\mathcal{R}}
\newcommand{\mcS}{\mathcal{S}}
\newcommand{\mcT}{\mathcal{T}}
\newcommand{\mcX}{\mathcal{X}}
\newcommand{\mcY}{\mathcal{Y}}
\newcommand{\mcZ}{\mathcal{Z}}
\DeclareMathOperator{\Gen}{Gen}
\DeclareMathOperator{\Cogen}{Cogen}
\DeclareMathOperator{\Add}{Add}
\DeclareMathOperator{\Dual}{D}
\DeclareMathOperator{\Gr}{Gr}
\newcommand{\GrMd}{\Gr_A \binom{M}{\underline{d}}}
\newcommand{\GrcMd}{\Gr_B \binom{\cp (M)}{\underline{d}, \underline{r}}}
\newcommand{\Grcdi}{\Gr_B \binom{\cp (M)}{\underline{d}, \underline{r}_i}}
\newcommand{\RdA}{\Rep_A(\underline{d})}
\newcommand{\RdrB}{\Rep_B(\underline{d},\underline{r})}
\newcommand{\RdrQB}{\Rep_{KQ_B}(\underline{d},\underline{r})}
\newcommand{\RdQA}{\Rep_{KQ_A}(\underline{d})}
\newcommand{\RRdA}{\RRep_A(\underline{d})}
\newcommand{\RrQinfty}{\Rep_{KQ^{\infty}}(1,\underline{r})}
\newcommand{\RrQn}{\Rep_{KQ^{>n}}(\underline{r})}
\newcommand{\OM}{\mathcal{O}_M}
\newcommand{\SN}{\mathcal{S}_{[N]}}
\newcommand{\SwN}{\mathcal{S}_{[\cp (N)]}}
\newcommand{\dd}{\underline{d}}
\newcommand{\rr}{\underline{r}}
\newcommand{\wdd}{(\underline{d}, \underline{r})}
\newcommand{\Gd}{\mathbf{Gl}_{\underline{d }}}
\newcommand{\Glr}{\mathbf{Gl}_{\underline{r}}}
\DeclareMathOperator{\dbslash}{/\!\!/}
\begin{document}

\title{On quiver Grassmannians and orbit closures for representation-finite algebras}

\author{William Crawley-Boevey}
\address{Department of Pure Mathematics, University of Leeds, Leeds LS2 9JT, UK}
\email{w.crawley-boevey@leeds.ac.uk}

\author{Julia Sauter}
\address{Fakult\"at f\"ur Mathematik, Universit\"at Bielefeld, Postfach 100 131, D-33501 Bielefeld, Germany}
\email{jsauter@math.uni-bielefeld.de}

\subjclass[2010]{Primary 16G60; Secondary 14L30, 14M15}

%16  (1959-now) Associative rings and algebras [For the commutative case, see 13-XX]
%16G  View Publications (1991-now) Representation theory of rings and algebras
%16G60  View Publications (1991-now) Representation type (finite, tame, wild, etc.)
%14  View Publications (1940-now) Algebraic geometry
%14L  View Publications (1973-now) Algebraic groups [For linear algebraic groups, see 20Gxx; for Lie algebras, see 17B45]
%14L30  View Publications (1980-now) Group actions on varieties or schemes (quotients) [See also 13A50, 14L24, 14M17]
%14  View Publications (1940-now) Algebraic geometry
%14M  View Publications (1973-now) Special varieties
%14M15  View Publications (1973-now) Grassmannians, Schubert varieties, flag manifolds [See also 32M10, 51M35]

\keywords{Quiver Grassmannian, Representation variety, Auslander algebra, Tilting Theory}

%\thanks{}

%\date{\today}

\begin{abstract}
We show that Auslander algebras have a unique tilting and cotilting module which is generated and cogenerated by a projective-injective; its endomorphism ring is called the projective quotient algebra.  
For any representation-finite algebra, we use the projective quotient algebra to construct desingularizations of quiver Grassmannians, orbit closures in representation varieties, and their desingularizations. This generalizes results of Cerulli Irelli, Feigin and Reineke. 
\end{abstract}
\maketitle

\section{Introduction}
Starting from a finite-dimensional module $M$ over a finite-dimensional associative algebra $A$ one can obtain an algebraic variety either by forming the so-called quiver Grassmannian 
$\GrMd$ parameterizing submodules of $M$ of dimension vector $\dd$, or by taking the orbit closure 
$\overline{\mcO_M}$ associated to $M$ in the representation variety parameterizing all $A$-modules of the 
same dimension vector as $M$.
In the situation where $A$ is the path algebra of a Dynkin quiver, Cerulli Irelli, Feigin and Reineke 
constructed desingularizations of quiver Grassmannians \cite{CIFR} and realized orbit closures 
as affine quotient varieties \cite{CFR}. One would like to generalize these constructions
to other algebras, and Keller and Scherotzke \cite{KS2} and Scherotzke \cite{Sch} obtained some results in the cases when $A$ is an iterated tilted algebra of Dynkin type or a self-injective algebra of finite representation type.
In this paper we generalize to the case when $A$ is an arbitrary finite-dimensional algebra of finite representation type.
We again construct desingularizations of quiver Grassmannians and realize orbit closures as affine quotient varieties. 
For the latter, our construction unifies the work of Cerulli Irelli, Feigin and Reineke
with a construction of closures of conjugacy classes used by Kraft and Procesi \cite{KP} to prove their normality.
In addition, we construct desingularizations of orbit closures.

In order to study a Dynkin quiver $Q$, a certain algebra (denoted $\tilde\Lambda_Q$ or $B_Q$) has been introduced independently by Hernandez and Leclerc \cite{HL} and by Cerulli Irelli, Feigin and Reineke.
We generalize this to an algebra $B=B_A$, which we call the \emph{projective quotient algebra}, associated to any finite-dimensional algebra $A$ of finite representation type.
We begin by discussing this algebra.
%For simplicity we assume that $A$ it is basic.
We write $A$-$\Modu$ for the category of (left) $A$-modules and $A$-$\modu$ for
the category of finite-dimensional modules.
Let $\mcQ$ be the category whose objects are surjective morphisms $f\colon P\to X$
in $A$-$\modu$ with $P$ projective, and whose morphisms 
from $f\colon P\to X$ to $f' \colon P' \to X'$ are given by pairs 
$(t,s)\in \Hom_A(P,P') \times \Hom_A(X, X' )$ such that $f'  t = s  f$.  
We can consider $\mcQ$ as a category of complexes with two terms,
and define $\mcH$ to be the corresponding homotopy category.
Assuming that $A$ has finite representation type, the category $\mcH$ has
only finitely many indecomposable objects, and we set $G$ to be the direct sum of them. Thus
$G$ is an additive generator for $\mcH$. The projective quotient algebra for $A$ 
is defined to be $B = \End_{\mcH}(G)^{op}$.

There is another characterization of the projective quotient algebra which shows how natural it is.
Recall that if $A$ has finite representation type, its Auslander algebra is $\End_A(E)^{op}$,
where $E$ is the direct sum of one copy of each indecomposable $A$-module.
Auslander \cite{AusII} characterized the algebras which arise this way as those 
of global dimension $\le 2$ and dominant dimension $\ge 2$.
%(Recall that the latter means that the first two terms in the minimal injective resolution of $A$ as an $A$-module are projective.) 
In the same spirit we have the following. 
(All tilting and cotilting modules are assumed
to be classical, so with projective dimension $\le 1$ and 
injective dimension $\le 1$ respectively.)

\begin{lem}
\label{l:introlem}
Let $\Gamma$ be a finite-dimensional algebra
%of global dimension $\le 2$, 
and let $\mcC$ be the class of modules generated and cogenerated by a 
projective-injective $\Gamma$-module. Then
$\Gamma$ is an Auslander algebra if and only if it has global
dimension $\le 2$ and
$\mcC$ contains a tilting module.
In this case there is a unique basic tilting module $T \in \mcC$ and it is also a cotilting module.
\end{lem}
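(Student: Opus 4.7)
The plan is to use Auslander's characterization recalled above: $\Gamma$ is an Auslander algebra iff it has global dimension $\le 2$ and dominant dimension $\ge 2$. Granting the global dimension hypothesis, the lemma reduces to the equivalence between dominant dimension $\ge 2$ and the existence of a tilting module in $\mcC$, together with the uniqueness and cotilting statements.

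For the direction ``$\Gamma$ Auslander $\Rightarrow$ tilting in $\mcC$'', I would let $Q$ be the basic projective-injective $\Gamma$-module and define
\[
 X := I^0(\Gamma)/\Gamma, \qquad T := Q \oplus X,
\]
where $I^0(\Gamma)$ is the injective envelope of $\Gamma$, which is projective-injective by dominant dimension $\ge 1$. The defining sequence $0 \to \Gamma \to I^0(\Gamma) \to X \to 0$ is then a projective resolution of $X$ (since $I^0(\Gamma)$ is projective-injective) and simultaneously the tilting resolution of $\Gamma$. Membership $T \in \mcC$ is immediate: $X$ is cogenerated by projective-injective via $X \hookrightarrow I^1(\Gamma)$ (projective-injective by dominant dimension $\ge 2$), and generated by projective-injective via the defining surjection $I^0(\Gamma) \twoheadrightarrow X$. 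Tilting properties: $\pdim T \le 1$; $\Ext^1(Q,T) = 0 = \Ext^1(T,Q)$ from projectivity and injectivity of $Q$; and $\Ext^1(X,X) = \Ext^2(X,\Gamma) = \Ext^1(\Omega X,\Gamma) = \Ext^1(\Gamma,\Gamma) = 0$ using the syzygy $\Omega X = \Gamma$ read off the defining sequence.

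For the converse, I would argue from the tilting resolution $0 \to \Gamma \to T^0 \to T^1 \to 0$ with $T^i \in \mcC$ and cogenerations $T^i \hookrightarrow Q^{a_i}$: the inclusion $\Gamma \hookrightarrow T^0 \hookrightarrow Q^{a_0}$ shows $I^0(\Gamma) \mid Q^{a_0}$ is projective-injective (dominant dimension $\ge 1$). For dominant dimension $\ge 2$, form the pushout $P = I^0(\Gamma) \sqcup_\Gamma T^0$; the inclusion $I^0(\Gamma) \hookrightarrow P$ is split by injectivity of $I^0(\Gamma)$, giving $P \cong I^0(\Gamma) \oplus T^1$. Then the cokernel $C = I^0(\Gamma)/\Gamma = P/T^0$ can be embedded into a projective-injective after combining with $T^1 \hookrightarrow Q^{a_1}$ via a diagram chase, forcing $I^1(\Gamma)$ to be projective-injective.

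For uniqueness, any basic tilting $T' \in \mcC$ has $Q$ as a summand because $\Ext^1(Q,T') = \Ext^1(T',Q) = 0$ (projectivity and injectivity of $Q$) makes $Q \oplus T'$ partial tilting, forcing $Q \mid T'$ by counting indecomposable summands; the remaining summand is identified with $X$ by using that the number of indecomposables in $\mcC$ equals the number of simple $\Gamma$-modules (via the Morita--Tachikawa-type equivalence $\mcC \simeq \modu A$ for Auslander algebras), so the basic tilting in $\mcC$ is forced to be the direct sum of one copy of each indecomposable object of $\mcC$. For the cotilting statement, $\idim X \le 1$ follows from the injective resolution $0 \to X \to I^1(\Gamma) \to I^2(\Gamma) \to 0$ inherited from $\Gamma$, and the cotilting coresolution $0 \to T_1 \to T_0 \to D\Gamma \to 0$ with $T_i \in \ad T$ is obtained using that $D\Gamma \in \Gen T$ (all injectives lie in $\Gen T$) together with minimal right $\ad T$-approximations, or equivalently by applying duality to the analogous tilting sequence for the Auslander algebra $\Gamma^{op}$ of $A^{op}$. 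The hardest step will be the final diagram chase embedding $C$ into a projective-injective in the converse direction, and the explicit construction of the cotilting coresolution, both of which rely on structural properties of $\mcC$ (closure under extensions, and the equivalence $\mcC \simeq \modu A$) specific to Auslander algebras.
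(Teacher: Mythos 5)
Your forward direction matches the paper's: $T=Q\oplus X$ with $X=I^0(\Gamma)/\Gamma$ is exactly the paper's $Q'\oplus\Bild\theta$, and the checks of the tilting axioms are the same. Where you diverge is in the converse and in the uniqueness/cotilting claims, and there is a real gap in the converse.

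In the converse you form the pushout $P=I^0(\Gamma)\sqcup_\Gamma T^0\cong I^0(\Gamma)\oplus T^1$ and obtain $0\to T^0\to P\to C\to 0$ with $C=I^0(\Gamma)/\Gamma$, and then assert that combining with $T^1\hookrightarrow Q^{a_1}$ ``via a diagram chase'' embeds $C$ in a projective-injective. This does not follow from what you have: $C$ is a \emph{quotient} of $P$, not a submodule, and the natural map $P\twoheadrightarrow T^1\hookrightarrow Q^{a_1}$ does not factor through $C$, because its kernel is $I^0(\Gamma)$ while $T^0\cap I^0(\Gamma)=\Gamma\subsetneq T^0$ inside $P$. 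You acknowledge this step as the hardest; as written it is missing the key input. The paper's route is different and closes the gap: it first observes, using only $\operatorname{gl.dim}\Gamma\le 2$ and the two short exact sequences defining membership in $\mcC$, that every $Y\in\mcC$ satisfies $\pdim Y\le 1$, $\idim Y\le 1$ and $\Ext^1(Y,Y)=0$, hence is partial tilting \emph{and} partial cotilting. In particular the hypothesised tilting module $Y\in\mcC$ is automatically cotilting. Then from $\Gamma\hookrightarrow Q$ (proj.-inj.) one gets $U=Q/\Gamma$ with $\pdim U\le 1$, and applying $\Hom(U,-)$ to the cogenerating sequence $0\to X\to I\to Y\to 0$ gives $\Ext^1(U,Y)=0$; the cotilting property of $Y$ then puts $U\in\Cogen Y$, so $U$ embeds in a proj.-inj.\ and dominant dimension $\ge 2$ follows. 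Some such use of the cotilting property (or an equivalent fact) seems unavoidable, and your pushout set-up does not supply it.

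For uniqueness and the cotilting statement you invoke the Morita--Tachikawa-type equivalence $\mcC\simeq A\text{-}\modu$ and the explicit construction of a cotilting coresolution via $\ad T$-approximations. Both would need justification and both are heavier than necessary. The observation above already shows that any $Y\in\mcC$ is partial tilting, so has at most $n$ non-isomorphic indecomposable summands; if $T,T'$ are two basic tilting modules in $\mcC$ then $T\oplus T'\in\mcC$ is partial tilting, and counting summands forces $T\cong T'$. The same count, together with $\idim T\le 1$ and $\Ext^1(T,T)=0$, shows $T$ is cotilting, with no need for the equivalence $\mcC\simeq A\text{-}\modu$ or an explicit coresolution.
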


\begin{proof}
Suppose $\Gamma$ has global dimension $\le 2$.
If $Y\in\mcC$ then there are exact sequences $0\to X\to I\to Y\to 0$ and $0\to Y\to P\to Z\to 0$
with $I$ injective and $P$ projective (in fact both projective-injective), 
and from the resulting long exact sequences it is easy to see that
$\pdim Y\le 1$, $\idim Y \le 1$ and $\Ext^1(Y,Y)=0$.
Thus any module $Y$ in $\mcC$ is a partial tilting and cotilting module, so has $\le n$ non-isomorphic
indecomposable summands, where $n$ is the number of simple $\Gamma$ modules,
and it has exactly $n$ if and only if $Y$ is tilting or equivalently cotilting.

If $\Gamma$ is an Auslander algebra, then the dominant dimension condition gives
a map $\theta:Q'\to Q''$ between projective-injective modules with $\Ker \theta \cong \Gamma$.
Then $Y = Q'\oplus \Bild\theta \in \mcC$ is a tilting module.

Conversely, if there is a tilting module $Y$ in $\mcC$ then $\Gamma$ embeds in a direct sum of copies of $Y$,
so in a projective-injective module. Thus there is an exact sequence $0\to \Gamma\to Q\to U\to 0$ with
$Q$ projective-injective. Then it is easy to see that $\pdim U \le 1$ and applying $\Hom(U,-)$ to
the first exact sequence for $Y$ above gives $\Ext^1(U,Y)=0$.
Thus, since $Y$ is a cotilting module, $U$ is cogenerated by $Y$, so it too embeds in a projective-injective.
It follows that $\Gamma$ has dominant dimension $\ge 2$, so is an Auslander algebra.
\end{proof}

Now let $A$ be an algebra of finite representation type. 
By the lemma,
its Auslander algebra $\Gamma$ has a uniquely determined basic tilting and cotilting module $T$ which is generated and cogenerated by a 
projective-injective module.

\begin{thm}
The projective quotient algebra $B$ for $A$ is isomorphic to $\End_\Gamma(T)^{op}$.
\end{thm}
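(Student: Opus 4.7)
The plan is to deduce the theorem from an equivalence of additive categories $F\colon\mcH\xrightarrow{\sim}\ad T$; the construction will use the Auslander correspondence $\Hom_A(E,-)\colon\modu A\xrightarrow{\sim}\proj\Gamma$, which carries projective $A$-modules to the projective-injective $\Gamma$-modules. Since $G$ is basic (the sum of one copy of each indecomposable object of $\mcH$) and $T$ is basic by Lemma~\ref{l:introlem}, any such equivalence forces $F(G)\cong T$, yielding
\[
B=\End_\mcH(G)^{op}\cong \End_\Gamma(F(G))^{op}\cong \End_\Gamma(T)^{op}.
\]
The first task is to match the indecomposables on each side: for any $(f\colon P\to X)$ in $\mcQ$, lifting the projective cover $p_X\colon P_X\twoheadrightarrow X$ through $f$ yields a splitting $P\cong P_X\oplus P'$ and a decomposition $(f\colon P\to X)\cong (p_X\colon P_X\to X)\oplus (P'\to 0)$ in $\mcQ$, while $(p_M\colon P_M\to M)$ is null-homotopic in $\mcH$ exactly when $p_M$ is split --- equivalently, when $M$ is projective. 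Hence the non-zero indecomposables of $\mcH$ are $(P\to 0)$ for each indecomposable projective $A$-module $P$ and $(p_M\colon P_M\to M)$ for each non-projective indecomposable $A$-module $M$, a total of $n$, matching the indecomposable summands of $T$ which, by the proof of Lemma~\ref{l:introlem}, are the projective-injectives $\Hom_A(E,P)$ together with the cokernels $I(\Hom_A(E,M))/\Hom_A(E,M)$ for non-projective indecomposable $M$, where $I(-)$ denotes the injective envelope of a $\Gamma$-module (projective-injective by the dominant dimension condition).

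Next I would define $F$ on $(f\colon P\to X)$ in $\mcQ$ by
\[
F(f):=\bigl[I(\Hom_A(E,X))/\Hom_A(E,X)\bigr]\,\oplus\,\Hom_A(E,P'),
\]
with $P'$ the summand arising from a decomposition $P\cong P_X\oplus P'$ as above. This recovers the bijection $(P\to 0)\mapsto\Hom_A(E,P)$ and $(p_M\colon P_M\to M)\mapsto I(\Hom_A(E,M))/\Hom_A(E,M)$ on indecomposables, so $F$ takes values in $\ad T$. Independence from the non-unique choice of splitting, together with the vanishing of the homotopy correction $h\colon X\to P'$ on the residual projective factors (because $f$ vanishes on $P'$), give well-definedness of $F$ on $\mcH$ and its functoriality on morphisms.

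The main obstacle will be verifying that $F$ is fully faithful. That null-homotopic morphisms in $\mcQ$ map to zero under $F$ is a direct check. The delicate converse --- that any $\Gamma$-module map $F(f)\to F(f')$ lifts back to a morphism $(t,s)$ in $\mcQ$, unique modulo homotopy --- requires lifting through the injective envelopes $I(\Hom_A(E,X))$ and invoking the Auslander equivalence to extract corresponding maps in $\modu A$, together with a separate analysis of the residual-projective component. Essential surjectivity onto $\ad T$ then follows from the matching of indecomposables above, completing the equivalence and hence the proof.
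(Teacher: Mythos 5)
Your strategy---building an explicit additive equivalence $F\colon\mcH\to\ad_\Gamma T$ and reading off the endomorphism rings---is a genuinely different route from the paper's, which establishes $\Dual C\cong T$ via the intermediate extension functor and then invokes standard tilting--cotilting duality for $B\cong\End_\Gamma(T)^{op}$. Your route could in principle work, but the construction of $F$ as written does not. The error is in the identification of the projective-injective $\Gamma$-modules: they are $\Hom_A(E,I)$ for $I$ an \emph{injective} $A$-module, not $\Hom_A(E,P)$ for $P$ projective. (The Nakayama isomorphism gives $D\Hom_A(P,E)\cong\Hom_A(E,\nu P)$ with $\nu P$ injective, and one checks these exhaust the projective-injectives.) Consequently the basic tilting module $T$ decomposes into the $\Hom_A(E,I)$ with $I$ injective indecomposable and the cokernels $I(\Hom_A(E,M))/\Hom_A(E,M)$ for $M$ \emph{non-injective} indecomposable --- a different partition of the indecomposable $A$-modules from yours unless $A$ is self-injective.

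Because of this, your proposed $F$ fails in two ways. For $P$ projective but not injective, $\Hom_A(E,P)$ is projective but not a direct summand of $T$ (a cokernel $I(\Hom_A(E,M))/\Hom_A(E,M)$ with $M$ non-injective can never be projective, else $\Hom_A(E,M)$ would split off its injective envelope), so $F(P\to 0)$ does not lie in $\ad T$. And for $U$ indecomposable, non-projective but injective (e.g.\ $U=S_1$ for $A=K(1\to 2)$), $\Hom_A(E,U)$ is already injective, so your formula gives $F(p_U\colon P_U\to U)=0$ even though $(p_U\colon P_U\to U)$ is a non-zero indecomposable in $\mcH$; hence $F$ cannot be an equivalence. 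Beyond this, the heart of the argument --- functoriality on morphisms and full faithfulness --- is only sketched as a program ("requires lifting through the injective envelopes \ldots together with a separate analysis \ldots") and not actually carried out. A corrected version of your approach would need a different formula (dualizing $C=c(E)$ suggests something like $(P\to X)\mapsto\Ker\bigl(\Hom_A(E,\nu P)\to D\Hom_A(X,E)\bigr)$), together with genuine work to verify full faithfulness; as it stands the proposal has a real gap.
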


This theorem and the next are are proved in \S\ref{s:pqa}.
For simplicity we suppose that $A$ is basic. 
Thus $A$ is a direct summand of $E$, 
%, and we denote by $\ep\in \Gamma$ the projection onto this summand. Also 
and also $A\to 0$ is a direct summand of $G$ in the category $\mcH$, 
and we denote
by $e\in B$ the projection onto this summand. Then 
$eBe \cong \End_{\mcH}(A\to 0)^{op} \cong A$, and we treat this as an identification.
There is a recollement
\[ 
\xymatrix{
\text{$B/BeB$-$\Modu$} \ar[rr]|{i} \;  && \; \text{$B$-$\Modu$} \; \ar@<2ex>[ll]^{p} \ar@<-2ex>[ll]_{q} \ar[rr]|>>>>>>>>>{e}  && \; 
\text{$A$-$\Modu$}
\ar@<2ex>[ll]^{r } \ar@<-2ex>[ll]_{\ell }.
}
\]
where $e$ also denotes the functor sending a $B$-module $M$ to the $A$-module $eM$, and
there is an associated intermediate extension functor~$c$.
We define $\Dual(-) = \Hom_K(-,K)$.

\begin{thm}
The intermediate extension functor $c:\text{$A$-$\Modu$} \to \text{$B$-$\Modu$}$
is given by $c(X) = (\Dual T)\otimes_\Gamma \Hom_A(E,X)$.
It satisfies $\Ext^1_B(c(X),c(Y)) = 0$, $\pdim c(X)\leq 1, \idim c(X)\leq 1$ for any $A$-modules $X,Y$.
\end{thm}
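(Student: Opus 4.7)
The plan is to use the tilting description $B=\End_\Gamma(T)^{op}$ from the preceding theorem to translate the intermediate extension computation from $B\text{-}\Modu$ to the Auslander algebra $\Gamma$, where the tilting and cotilting properties of $T$ control everything. First I would make the recollement functors explicit: in standard form $e(M)=eM$, $\ell(X)=Be\otimes_A X$, $r(X)=\Hom_A(eB,X)$, and $c(X)=\Ima(\ell(X)\to r(X))$, where the natural map is built from the unit/counit of the adjunctions. Via $B=\End_\Gamma(T)^{op}$, the idempotent $e$ corresponds to projection onto a direct summand $Te$ of $T$ satisfying $\End_\Gamma(Te)^{op}\cong A$; combined with Lemma~\ref{l:introlem} this forces $Te$ to be the basic projective-injective summand of $T$. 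Concretely, this yields bimodule identifications $Be\cong\Hom_\Gamma(T,Te)$, $eB\cong\Hom_\Gamma(Te,T)$, and $e\cdot\Dual(T)\cong\Dual(Te)$.

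To establish the formula $c(X)=\Dual(T)\otimes_\Gamma\Hom_A(E,X)$, I would set $M(X):=\Dual(T)\otimes_\Gamma\Hom_A(E,X)$ and verify the three conditions characterizing the intermediate extension: (i) $eM(X)\cong X$ as an $A$-module, (ii) $M(X)$ has no nonzero submodule annihilated by $e$, and (iii) $M(X)$ has no nonzero quotient annihilated by $e$. Condition (i) reduces to the counit isomorphism of the fully faithful functor $\Hom_A(E,-)\colon A\text{-}\Modu\to\Gamma\text{-}\Modu$ applied after rewriting $eM(X)=\Dual(Te)\otimes_\Gamma\Hom_A(E,X)$ in terms of the $(\Gamma, A)$-bimodule $\Dual(Te)$. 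Conditions (ii) and (iii) follow from the observation that $\Hom_A(E,X)$ is generated and cogenerated by $Te$ as a $\Gamma$-module (through the projective cover and injective envelope of $X$), and that $\Dual(T)\otimes_\Gamma-$ preserves these (co)generation properties. Equivalently, one can exhibit the factorization $\ell(X)\twoheadrightarrow M(X)\hookrightarrow r(X)$ explicitly via tensor-hom adjunctions and read off the image directly.

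Finally, the homological vanishings $\Ext^1_B(c(X),c(Y))=0$, $\pdim c(X)\leq 1$ and $\idim c(X)\leq 1$ are consequences of $T$ being simultaneously tilting and cotilting over $\Gamma$. Since $\Hom_A(E,X)$ lies in the tilting torsion class (being generated by $Te$), standard tilting theory yields $\pdim_B(\Dual(T)\otimes_\Gamma \Hom_A(E,X))\leq 1$; dually, the cotilting property of $T$ gives $\idim_B c(X)\leq 1$; and the tilting equivalence on torsion classes identifies $\Ext^1_B(c(X),c(Y))$ with the $\Ext^1$-group on the $\Gamma$-side, which vanishes by the Ext-vanishing axiom for the tilting torsion pair. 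The main obstacle lies in the middle step: verifying that $M(X)$ really is the image of $\ell(X)\to r(X)$ requires a careful control of the bimodule structures, and the substantive claim is that $\Dual(T)\otimes_\Gamma-$ sends $\Hom_A(E,X)$ to a $B$-module with no subquotients in $\Ker e$, a nontrivial (co)generation statement that crucially uses that $T$ is both generated and cogenerated by its projective-injective summand.
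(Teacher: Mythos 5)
Your approach is genuinely different from the paper's and, as written, has a gap at the step you yourself flagged as the ``main obstacle.''

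The paper does not route through $\End_\Gamma(T)^{op}$ or through the three characterizing conditions of Lemma~\ref{IntExt}. Instead it defines $C := c(E)$ and proves $c(X)\cong C\otimes_\Gamma \Hom_A(E,X)$ by a one-paragraph naturality argument: the assignment $\Hom_A(E,X)\to\Hom_B(c(E),c(X))$ induces a natural map $h_X\colon C\otimes_\Gamma\Hom_A(E,X)\to c(X)$; $h_E$ is an isomorphism; both sides commute with direct sums; and since $A$ is representation-finite every $A$-module is a direct sum of summands of $E$, so $h_X$ is an isomorphism for all $X$. The identification $C\cong DT$ is then a separate result (Theorem~\ref{t:dct}, proved by examining $\ell e(C)\twoheadrightarrow C\hookrightarrow re(C)$), and the statements $\pdim c(X)\le 1$, $\idim c(X)\le 1$ are proved directly from the raw description $c(M)(P\to X)=\coKer(\Hom_A(X,M)\to\Hom_A(P,M))$ via explicit resolutions (Lemmas~\ref{l:cpd} and \ref{l:cid}), not from tilting theory.

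The substantive gap in your proposal is the claim that ``$\Hom_A(E,X)$ is generated and cogenerated by $Te$ as a $\Gamma$-module (through the projective cover and injective envelope of $X$).'' Cogeneration is fine: applying the left exact functor $\Hom_A(E,-)$ to $X\hookrightarrow I$ gives an embedding into the projective-injective $\Hom_A(E,I)\in\Add(Te)$. But generation fails. Applying $\Hom_A(E,-)$ to a projective cover $P\twoheadrightarrow X$ does \emph{not} yield an epimorphism, because $E$ is not a projective $A$-module; moreover $\Hom_A(E,P)$ is a projective $\Gamma$-module, not an object of $\Add(Te)$. Concretely, for $A=K[x]/(x^2)$ and $X=K$ the simple module, the map $\Hom_A(E,A)\to\Hom_A(E,K)$ vanishes on the summand $\Hom_A(K,-)$ (since $A\to K$ kills the socle), so $\Hom_A(E,K)$ is not a quotient of a direct sum of copies of $Te=\Hom_A(E,DA)$. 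This invalidates your argument for condition (iii) and also the subsequent assertion that $\Hom_A(E,X)\in\Gen(T)$, which you use for the projective-dimension bound. Note also that the right-exact functor $DT\otimes_\Gamma-$ does not automatically preserve monomorphisms, so even the cogeneration half needs more care (one needs the relevant modules to lie where $DT\otimes_\Gamma-$ is exact). A secondary point: your identification of $Te$ with the projective-injective summand $Q'$ is correct, but does not follow merely from $\End_\Gamma(Te)^{op}\cong A$ plus Lemma~\ref{l:introlem} (non-isomorphic modules can share endomorphism rings); one must actually trace $e$ through the isomorphism $B\cong\End_\Gamma(T)^{op}$, e.g.\ via $Te\cong D(eC)\cong D(E)\cong\Hom_A(E,DA)$.
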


We use this result together with standard properties of intermediate extension functors to obtain our geometric results.
Henceforth let $K$ be an algebraically closed field. 
Still $A$ is a basic algebra of finite representation type, and
let $e_1,\dots,e_n$ be a
complete set of primitive orthogonal idempotents in $A$.
The dimension vector of an $A$-module $N$ is $\dd = (\dim e_i N) \in \N_0^n$.
Let $e_1,\dots,e_m$ be a complete set of orthogonal idempotents in $B$, with the first 
$n\le m$ being the corresponding idempotents for $A$. Dimension vectors for $B$ are given by pairs
$(\dd,\rr)$ with $\dd\in\N_0^n$ and $\rr\in\N_0^{m-n}$.
%If $N$ is an $A$-module of dimension $\dd \in \N_0^n$, the dimension vector of $\cp (N)$ is of the form $(\dd, \rr^N)$ for some $\rr^N\in \N_0^{m-n}$. 

Let $M$ be an $A$-module.
Recall that $\GrMd$ is a projective variety,
possibly with singularities.  
If $N$ is an $A$-module of dimension $\dd$, then there is a map of varieties
from the set of injective maps in $\Hom_A(N,M)$ to the Grassmannian, whose
image is the set of submodules isomorphic to $N$. We denote it $\mcS_{[N]}$;
it is locally closed and if non-empty it is irreducible.
Note that $\GrMd$ may have several irreducible components, 
but using that $A$ has finite representation type, they are of the form
$\overline{\mcS_{[N_i]}}$ for some $A$-modules $N_1,\dots,N_\ell$.
We consider $\mcS_{[c(N_i)]} \subseteq \Grcdi$, where $(\dd,\rr_i)$ is
the dimension vector of $c(N_i)$.
A \emph{desingularization} of a variety $X$ is a birational projective morphism 
from a smooth variety to $X$.
Our first geometric application is the following.

\begin{thm} \label{GrassDes} 
%Multiplication by $\eprime $ gives a desingularization 
The functor $e$ induces a desingularization
$
\bigsqcup_{i=1}^\ell
\overline{\mcS_{[c(N_i)]}}
%_{N\in {\rm gsub}} 
%\Grcd 
\to \GrMd
$.
\end{thm}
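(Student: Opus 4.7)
The plan is to build the morphism directly from the quotient functor $e$, then separately verify projectivity, birationality, and smoothness of the source. The decisive technical point will be establishing smoothness globally on each $\overline{\mathcal{S}_{[c(N_i)]}}$, not merely on its open stratum.

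First I construct the morphism. Since $e\in B$ is the idempotent with $eBe=A$, the operation $U\mapsto eU$ sends a $B$-submodule $U\subseteq c(M)$ of dimension vector $(\underline{d},\underline{r}_i)$ to the $A$-submodule $eU\subseteq ec(M)=M$ of dimension vector $\underline{d}$, algebraically in Pl\"ucker coordinates. This yields a morphism of projective varieties
\[
\pi_i\colon \Gr_B\binom{c(M)}{(\underline{d},\underline{r}_i)}\longrightarrow \GrMd,
\]
which I restrict to the closed subvariety $\overline{\mathcal{S}_{[c(N_i)]}}$; since $ec(N_i)=N_i$, the image of this restriction lies in $\overline{\mathcal{S}_{[N_i]}}$. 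Assembling over $i$ gives the candidate desingularization $\pi$, and projectivity is automatic because the source is closed in a projective Grassmannian.

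For birationality I show that $\pi_i$ restricts to an isomorphism $\mathcal{S}_{[c(N_i)]}\cong \mathcal{S}_{[N_i]}$. The inverse is $V\mapsto c(V)$, which is algebraic via the explicit formula $c(V)=(\Dual T)\otimes_\Gamma\Hom_A(E,V)$; the embedding $c(V)\hookrightarrow c(M)$ comes from the inclusion $V\hookrightarrow M$ by functoriality. One composition is $ec=\id$. For the other, if $U\subseteq c(M)$ is isomorphic to $c(N_i)$, then $U$ has no non-zero subobject or quotient annihilated by $e$ (this is the defining property of the essential image of $c$), so the canonical map $c(eU)\to c(M)$ coming from $eU\hookrightarrow M$ must land in $U$ and be an isomorphism onto $U$, forcing $U=c(eU)$. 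Surjectivity and birationality of $\pi$ then follow because each $\mathcal{S}_{[N_i]}$ is open and dense in $\overline{\mathcal{S}_{[N_i]}}$, the components cover $\GrMd$, and $\pi$ is proper.

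The main obstacle is smoothness of each $\overline{\mathcal{S}_{[c(N_i)]}}$. At any $U\in\mathcal{S}_{[c(N_i)]}$ the tangent space to the ambient Grassmannian is $\Hom_B(U,c(M)/U)$. Applying $\Hom_B(U,-)$ to $0\to U\to c(M)\to c(M)/U\to 0$ and combining $\Ext^1_B(c(N_i),c(M))=0$ (from the previous theorem with $X=N_i$, $Y=M$) with $\pdim c(N_i)\leq 1$ (so $\Ext^2_B(c(N_i),-)=0$) yields $\Ext^1_B(U,c(M)/U)=0$. Together with the dimension count
\[
\dim \mathcal{S}_{[c(N_i)]}=\dim\Hom_B(c(N_i),c(M))-\dim\End_B(c(N_i))=\dim\Hom_B(c(N_i),c(M)/c(N_i))
\]
(using $\Ext^1_B(c(N_i),c(N_i))=0$), this shows the ambient Grassmannian is smooth of the correct dimension at every point of $\mathcal{S}_{[c(N_i)]}$, so this stratum is open in a single irreducible component whose closure must be $\overline{\mathcal{S}_{[c(N_i)]}}$. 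The hardest step is extending smoothness to the entire closure: I would attempt this by proving that every $U'\in\overline{\mathcal{S}_{[c(N_i)]}}$ is itself of the form $c(N')$ for some $A$-module $N'$, i.e.\ that the essential image of $c$---characterized by the vanishing of sub/quotient objects annihilated by $e$---is closed inside the relevant Grassmannian; the identical cohomological calculation then gives $\Ext^1_B(U',c(M)/U')=0$ throughout, and smoothness of the closure follows.

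Putting everything together, $\pi$ is a projective birational morphism from a disjoint union of smooth irreducible varieties onto $\GrMd$, hence a desingularization.
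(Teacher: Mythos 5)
Your construction of the morphism, its projectivity, and the set-theoretic identification $\pi^{-1}(\mcS_{[N_i]})=\mcS_{[c(N_i)]}$ are all in line with the paper. The genuine gap is in your treatment of smoothness. You correctly compute $\Ext^1_B(U,c(M)/U)=0$ for $U\cong c(N_i)$ by combining $\Ext^1_B(c(N_i),c(M))=0$ with $\pdim c(N_i)\le 1$, but you then propose to extend this to the whole closure $\overline{\mcS_{[c(N_i)]}}$ by arguing that every point $U'$ there is of the form $c(N')$, i.e.\ that the essential image of $c$ cuts out a closed subset of the Grassmannian. This is not true in general: submodules of $c(M)$ are automatically stable (being submodules of a stable module), but they need not be costable, so degenerations of $c(N_i)$ inside $c(M)$ generally fail to be bistable, hence fail to be of the form $c(N')$. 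That route will not close.

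Moreover the detour is unnecessary: the paper's argument (via \cite[Proposition 7.1]{CIFR}) shows the entire Grassmannian $\GrcMd$ is smooth. The point is that for an \emph{arbitrary} submodule $U\subseteq c(M)$ of the right dimension vector, the vanishing $\Ext^1_B(U,c(M)/U)=0$ follows from properties of $c(M)$ alone together with $\mathrm{gl.dim}\,B\le 2$: from $0\to U\to c(M)\to c(M)/U\to 0$ one gets $\pdim U\le 1$ and $\idim c(M)/U\le 1$ (using $\pdim c(M)\le 1$, $\idim c(M)\le 1$ and global dimension $\le 2$), and then $\Ext^1_B(c(M),c(M))=0$ propagates to $\Ext^1_B(U,c(M)/U)=0$ by chasing the two long exact sequences. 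No hypothesis on $U$ being in the image of $c$ is required. With this, $\overline{\mcS_{[c(N_i)]}}$ is a connected component of a smooth variety. Finally, your birationality argument is essentially sound in spirit, but the claim that $V\mapsto c(V)$ is an algebraic inverse on the stratum is asserted rather than proved; the paper instead verifies the tangent map is injective on $\mcS_{[c(N_i)]}$ and compares dimensions with $\mcS_{[N_i]}$, which is the cleaner route.
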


Modules of any given dimension vector $\dd\in \N_0^n$ are parameterized by a variety $\RdA$
and the group $\Gd$ acts with the orbits being the isomorphism classes. 
We write $\OM$ for the orbit corresponding to a module $M$.
Our second geometric application is as follows.

\begin{thm}
If $K$ has characteristic zero and $M$ is an $A$-module, then
$\overline{\OM}$ is isomorphic to the affine quotient variety
$\RdrB\dbslash\Glr$ where $(\dd,\rr)$ is the dimension vector of $c(M)$,
and also to $\overline{\mcO_{c(M)}} \dbslash \Glr$.
\end{thm}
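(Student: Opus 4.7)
The plan is to construct a natural morphism $\pi:\RdrB\to\RdA$ from the functor $e$, show that it factors through the GIT quotients bijectively, and then upgrade to an isomorphism by standard arguments in characteristic zero.

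After choosing a basis compatible with the decomposition of $K^{\dd+\rr}$ induced by the idempotents, the functor $e$ (restricting a $B$-module to its underlying $A$-module) yields a morphism of varieties $\pi:\RdrB\to\RdA$. This morphism is $\Gd$-equivariant and $\Glr$-invariant, the latter because $\Glr$ acts only on the kernel of $e$. Hence $\pi$ factors through a morphism $\bar\pi:\RdrB\dbslash\Glr\to\RdA$. The point $c(M)\in\RdrB$, which has dimension vector $(\dd,\rr)$ by hypothesis, maps under $\pi$ to $M$, so by $\Gd$-equivariance $\OM$ lies in the image of $\bar\pi$. The two isomorphisms in the statement amount to $\bar\pi$ being an isomorphism onto $\overline{\OM}$, and the analogous statement for the restriction of $\bar\pi$ to $\overline{\mcO_{c(M)}}\dbslash\Glr$.

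I would exploit the structural properties of the intermediate extension $c$. By the adjunction properties of the recollement, every $B$-module $N'$ with $eN'=N$ fits into a canonical factorisation $\ell(N)\to N'\to r(N)$ whose image is $c(N)$, so $c(N)$ occurs as a subquotient of $N'$ whose complement is supported on $B/BeB$. Combined with the vanishing $\Ext^1_B(c(X),c(Y))=0$ from the previous theorem, this enables two things: (a) every degeneration $M\leadsto N$ in $\RdA$ lifts to a degeneration $c(M)\leadsto N'$ in $\RdrB$, so that $\pi(\overline{\mcO_{c(M)}})=\overline{\OM}$; and (b) the closed $\Glr$-orbits in $\RdrB$ correspond to isomorphism classes of $B$-modules of the form $c(N)\oplus S$, with $N$ an $A$-module of dimension $\dd$ and $S$ a semisimple $B/BeB$-module of the complementary dimension. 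Existence of such an $S$ requires $\dim c(N)\le(\dd,\rr)$ componentwise, and the $\Ext^1$-rigidity together with the behaviour of $\dim c$ along degenerations then pins down $N\in\overline{\OM}$. Together (a) and (b) yield a bijection between closed points of $\RdrB\dbslash\Glr$ and of $\overline{\OM}$.

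To upgrade this to a scheme isomorphism I would combine characteristic-zero GIT (ensuring reducedness of the quotient) with the birationality of $\bar\pi$ onto $\overline{\OM}$, since over the dense orbit $\OM$ the fiber is a single $\Glr$-orbit; Zariski's Main Theorem then concludes. The main obstacle I foresee is step (b): the precise classification of closed $\Glr$-orbits and the exclusion of closed orbits lying over $\RdA\setminus\overline{\OM}$. This rests on the tilting-cotilting rigidity of $c$ and the behaviour of $\dim c$ under degenerations, without which $\RdrB\dbslash\Glr$ could a priori acquire extra components or map onto a strictly larger subvariety of $\RdA$.
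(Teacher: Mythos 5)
Your outline matches the paper's broad strategy up to a point: both start from the $\Gd$-equivariant, $\Glr$-invariant morphism $e\colon \RdrB\to\RdA$ and its induced map $\bar\pi\colon \RdrB\dbslash\Glr\to\RdA$, and both use the intermediate-extension characterisation (Lemma~\ref{IntExt}) together with the dimension-vector condition of Lemma~\ref{ImRes} to identify the set-theoretic image of $\bar\pi$ with $\overline{\OM}$. Up to that point your plan is sound.

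The gap is in the final step. You propose to establish bijectivity on closed points, observe that $\bar\pi$ is birational over $\OM$, and then conclude by Zariski's Main Theorem. This does not go through. A bijective birational morphism of reduced varieties is an isomorphism only if the \emph{target} is normal, and here the target is $\overline{\OM}$, whose normality is precisely the kind of information one hopes to extract from a theorem like this (this is the content of the Kraft--Procesi argument for conjugacy classes, and the paper explicitly invokes that comparison). Without normality, $\bar\pi$ could very well be the normalisation map of a non-normal $\overline{\OM}$ — the usual cuspidal-cubic phenomenon — and reducedness of the GIT quotient does nothing to exclude this. Knowing that the GIT quotient is normal (if one could show it) would only exhibit it as the normalisation, still not an isomorphism.

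The paper closes this gap by a fundamentally different, ring-theoretic argument: Lemma~\ref{FFT} shows that the comorphism $K[\RdA]\to K[\RdrB]^{\Glr}$ is \emph{surjective}, i.e.\ $\bar\pi$ is a closed immersion. This is proved via the deframing construction and Le~Bruyn--Procesi's First Fundamental Theorem for quivers, which identify the invariant ring with an algebra of traces of cycles, all of which are shown to be pulled back from $\RdA$. The Reynolds operator is then used to pass from the path algebra to the quotient $B$. The closed-immersion statement is strictly stronger than bijectivity and is exactly what makes the identification of $\RdrB\dbslash\Glr$ with the (possibly non-normal) closed subvariety $\overline{\OM}$ an isomorphism of varieties with no normality hypothesis. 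To fix your argument you would need to replace the ZMT step with an honest computation of invariants (or a proof that $\overline{\OM}$ is normal from some other source, which is not available in this generality).
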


Associated to the recollement there is a stability notion for $B$-modules.
We denote by $\RdrB^s$ the open subset of stable modules in $\RdrB$
and by $\RdrB^s / \Glr$ the corresponding geometric quotient (the GIT
moduli space of stable $B$-module). 
%We denote by $\overline{\mcO_{c(M)}}^s$ the open subset of stable modules in $\overline{\mcO_{c(M)}}$ and by $\overline{\mcO_{c(M)}}^s / \Glr$ the corresponding quotient.
Using this we obtain another desingularization.

\begin{thm}
If $K$ has characteristic zero and $M$ is an $A$-module, then
the natural map 
\[\left( \overline{\mcO_{c(M)}} \cap \RdrB^s\right)/ \Glr \to \overline{\mcO_M}
\]
is a desingularization.
\end{thm}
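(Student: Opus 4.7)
I plan to verify the three defining properties of a desingularization: smoothness and irreducibility of the source, together with projectivity and birationality of the morphism. The morphism itself arises from $M \cong ec(M)$: applying the functor $e$ pointwise sends $\overline{\mcO_{c(M)}}$ into $\overline{\mcO_M}$ by continuity, and the map is $\Glr$-invariant because $\Glr$ acts trivially on the $A$-module structure on the summand $K^{\dd}$. Restricting to the $\Glr$-invariant open subset $\overline{\mcO_{c(M)}} \cap \RdrB^s$ and passing to the geometric quotient yields the claimed morphism.

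For birationality, I would use that $\Ext^1_B(c(M),c(M)) = 0$ (from the theorem on intermediate extensions), so the orbit $\mcO_{c(M)}$ is open in $\RdrB$ and $\overline{\mcO_{c(M)}}$ is an irreducible component; moreover $c(M)$ lies in the essential image of $c$ and is therefore stable, so $\mcO_{c(M)} \subseteq \RdrB^s$. To identify the fibres of $e\colon \mcO_{c(M)} \to \mcO_M$ with $\Glr$-orbits, suppose $Y_1, Y_2 \in \mcO_{c(M)}$ both restrict to the same $A$-module $N$ on $K^{\dd}$; any $B$-isomorphism $\phi\colon Y_1 \to Y_2$ restricts to an $A$-automorphism $\bar\phi$ of $N$, and composing with $c(\bar\phi)^{-1}$ modifies $\phi$ into an isomorphism that fixes $K^{\dd}$, i.e.\ an element of $\Glr$. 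This gives $\mcO_{c(M)}/\Glr \cong \mcO_M$ as an isomorphism of dense opens. For smoothness, every stable $Y$ satisfies $Y \cong c(eY)$, hence $\Ext^1_B(Y,Y)=0$; so $Y$ is a smooth point of $\RdrB$ lying in a single irreducible component, making $\overline{\mcO_{c(M)}} \cap \RdrB^s$ smooth. The same lifting argument shows that $\Glr$ acts freely on this locus, so the geometric quotient is smooth and irreducible.

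The main obstacle is projectivity. The strategy is to match the recollement-based stability with King's $\theta$-stability on $\RdrB$ for the character on $\Glr$ associated to the idempotents $e_{n+1},\dots,e_m$ of $B$, so that standard GIT produces a projective morphism $\RdrB^s/\Glr \to \RdrB \dbslash \Glr$. The previous theorem identifies $\RdrB \dbslash \Glr \cong \overline{\mcO_M}$, and restricting to the closed subvariety $\overline{\mcO_{c(M)}} \cap \RdrB^s$ of $\RdrB^s$ preserves projectivity over $\overline{\mcO_M}$. Surjectivity is then automatic: the image is closed and contains the dense open orbit $\mcO_M$. The technical heart lies in verifying that the recollement stability matches a GIT stability and that the affine GIT quotient of the closed subvariety $\overline{\mcO_{c(M)}}$ coincides with $\overline{\mcO_M}$.
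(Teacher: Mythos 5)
Your overall strategy matches the paper's: match the recollement stability with King's $\theta$-stability via the deframed quiver to get a projective morphism $\RdrB^s/\Glr \to \RdrB\dbslash\Glr$, identify $\RdrB\dbslash\Glr \cong \overline{\mcO_M}$ using the previous theorem, show smoothness of $\RdrB^s$, observe that $(\overline{\mcO_{c(M)}}\cap\RdrB^s)/\Glr$ is a connected component (since rigidity of $c(M)$ makes $\overline{\mcO_{c(M)}}$ an irreducible component), and establish birationality by identifying the preimage of $\mcO_M$ with $\mcO_{c(M)}$. Your birationality argument is in the same spirit as the paper's, which uses Lemma~\ref{IntExt}(2) (the subquotient characterization) to show the fibre over $\mcO_M$ in $\RdrB$ lies entirely in $\mcO_{c(M)}$, and then cites the principal-bundle structure of $\mcO_{c(M)}\to\mcO_M$.

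However, there is a genuine error in your smoothness argument. You claim that every stable $Y$ satisfies $Y\cong c(eY)$, hence $\Ext^1_B(Y,Y)=0$. This is false: stability only gives a monomorphism $c(eY)\hookrightarrow Y$ (Lemma/Definition~\ref{stable}), not an isomorphism. The isomorphism $Y\cong c(eY)$ characterizes \emph{bistable} modules, which form a strictly smaller class; for instance a generic proper submodule of $c(I)$ with $I$ injective is stable but not bistable. Had your claim been true, every stable point would be rigid and $\RdrB^s$ would be a finite union of open orbits, which is not the case. The correct argument, which the paper uses, is weaker but sufficient: by Lemma~\ref{l:Fstable} a stable $Y$ embeds in some $c(I)$, and then Lemma~\ref{Ext2}, combined with $\idim Y\le 2$ from $\mathrm{gl.dim}\,B\le 2$, gives $\Ext^2_B(Y,Y)=0$. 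This vanishing, via Lemma~\ref{Ext2smooth}, already yields smoothness of the scheme $\RRep_B(\dd,\rr)$ at $Y$. Similarly, your appeal to ``the same lifting argument'' for freeness of the $\Glr$-action rests on the false bistability claim; the correct justification is the standard GIT fact that $\theta$-stable points have trivial stabilizer in $\Glr$ (which follows from the stabilizer in $\Gl_{(1,\rr)}$ being the central $K^*$).
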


Part of this work was completed while the first author was visiting CRC 701 at Universit\"at Bielefeld. The author thanks his hosts for their hospitality.

\section{Recollements}
Recall (see \cite{BBD}) that a \emph{recollement} of abelian categories is a diagram 
\[
\xymatrix{
\mcA \ar[rr]|{i} \;  && \; \mcB \; \ar@<2ex>[ll]^{p} \ar@<-2ex>[ll]_{q } \ar[rr]|{\eprime}  && \; \mcC   \ar@<2ex>[ll]^{r} \ar@<-2ex>[ll]_{\ell}
}
\]
consisting of of three abelian categories and six functors satisfying the following conditions.
\begin{itemize}
\item[(1)]
$(q,i), (i,p), (\ell , \eprime ) , (\eprime , r)$ are adjoint pairs of functors. 
\item[(2)]
The natural transformations $\rho \colon \eprime r \to \id_{\mcC}, 
\la \colon \id_{\mcC} \to \eprime \ell $ are isomorphisms.  
\item[(3)] The natural transformations $P\colon \id_{\mcA} \to pi$ and 
$Q \colon qi\to \id_{\mcA}$ are isomorphisms. 
\item[(4)] The functor $i$ is an embedding onto the full subcategory of $\mcB$ with objects $b$ such that $\eprime b =0$.  
\end{itemize}

The condition (1) implies that $i$ and $e$ are exact,and the conditions (2) and (3) are equivalent to the fully faithfulness of $\ell$, $r$ and $i$.

\subsection*{The intermediate extension functor} 
Associated to a recollement there is a functor 
$c \colon \mcC \to \mcB$
called the \emph{intermediate extension functor} 
given by 
\[
c(M) =\Bild (\ell (M)\xrightarrow{\gamma_M} r(M))
\]
where $\gamma_M$ is the adjoint of the 
inverse of the natural map $\rho_M\colon \eprime r(M)\to M$,
or equivalently the adjoint of the inverse of the natural map $\lambda_M\colon  M\to \eprime \ell (M)$. 
For $N$ in $\mcB$ set $\eprime (N):= M$. Then there are two adjoint maps 
$\al_N\colon \ell (M)\to N, \;\beta_N\colon N\to r(M)$ and the following diagram commutes 
\[ 
\xymatrix{
\ell (M)\ar[rd]_{\gamma_M} \ar[r]^{\al_N}& N\ar[d]^{\beta_N} \\
& r(M)
}
\]

The following lemma summarizes results in \cite[\S 4]{Ku} and \cite[Proposition 4.4]{FP}.

\begin{lem}\
\label{IntExt} 
\begin{itemize}

\item[(1)] $\eprime c(M)\cong M$ naturally in $M$. 
\item[(2)] Let $N$ be in $\mcB$ with $\eprime (N)\cong M$. Then $c(M)$ is a subquotient of $N$. More precisely, from the above diagram we get short exact sequences 
\[ 
\begin{aligned}
0\to \Ker \beta_N  &\to N \to \Bild \beta_N\to 0\\
0\to c(M) & \xrightarrow{j} \Bild \beta_N \to \coKer (j)\to 0 
\end{aligned}
\] 
with $\eprime (\Ker \beta_N)=0 = \eprime (\coKer (j))$. 
\item[(3)] Let $N$ be in $\mcB$ with $\eprime (N)\cong M$. Then, $N\cong c(M)$ if and only if $N$ has no non-zero subobjects or quotients in $\mcA$.
\item[(4)] The functor $c$ preserves epimorphisms and monomorphisms. 
\item[(5)] If $\mcB$ is a category with sums and products in which the natural map 
$\bigoplus_{\al} N_{\al} \to \prod_{\al} N_{\al}$ is monic for all indexed sets of objects 
$\{ N_{\al}\}$, then $c$ commutes with direct sums. 
\item[(6)] The functor $c$ maps simples in $\mcC$ to simples in $\mcB$. There is a bijection between sets of isomorphisms classes of simples 
\[ 
\{ \text{ simples in }\mcA\} \sqcup \{ \text{ simples in }\mcC\} \to \{ \text{ simples in }\mcB\}
\]
giving by sending a simple $N$ in $\mcA$ to $i(N)$ and a simple $M$ in $\mcC$ to $c(M)$. 
\item[(7)] There are short exact sequences of natural transformations 
\[ 
0\to ip\ell \to \ell \to c \to 0, \quad 0\to c\to r \to iqr \to 0
\]
\end{itemize}
\end{lem}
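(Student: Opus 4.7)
My plan is to establish the seven claims in the listed order, first deriving (1) and (2) from adjunction yoga, then bootstrapping (3), (4), (6), (7) from them, and finally treating (5), which I expect to be the main subtlety. For (1), apply the exact functor $e$ to $c(M) = \Bild \gamma_M$: unravelling the adjunction definition shows that $e(\alpha_N) = \lambda_M^{-1}$ and $e(\beta_N) = \rho_M^{-1}$ for any $N$ with $eN = M$, so $e\gamma_M$ corresponds to $\rho_M^{-1}$ under the $(\ell,e)$-adjunction and is therefore an isomorphism, giving $ec(M) \cong M$ naturally. For (2), both $\gamma_M$ and $\beta_N\alpha_N$ have $(\ell,e)$-adjoint equal to $\rho_M^{-1}$, so the triangle commutes; the identity $c(M) = \Bild(\beta_N\alpha_N) \subseteq \Bild\beta_N$ then yields the two displayed short exact sequences, and exactness of $e$, together with (1) and $e(\beta_N) = \rho_M^{-1}$, forces $e(\Ker\beta_N) = 0 = e(\coKer j)$.

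For (3), the forward direction uses $\Hom_\mcB(i(A), r(X)) = \Hom_\mcC(ei(A), X) = 0$ to exclude nonzero subobjects of $c(M) \subseteq r(M)$ lying in $\Bild i$, and dually $\Hom_\mcB(\ell(M), i(A)) = 0$ for quotients; the converse is immediate from (2), since the hypothesis forces $\Ker\beta_N = 0$ and $\coKer j = 0$. For (4), $c(f)$ on a monomorphism $f$ is the restriction of the monomorphism $r(f)$ to $c(M) \subseteq r(M)$, hence mono; the epimorphism case is dual using $\ell$ right exact. For (6), a proper nonzero subobject $X \subset c(M)$ with $M$ simple has $eX \in \{0, M\}$: both cases contradict (3), the first because $X \in \Bild i$, the second because $c(M)/X \in \Bild i$; the bijection then partitions simples of $\mcB$ according to whether $e$ annihilates them, with inverses $i$ and $c$. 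For (7), I would identify $ip(N)$ as the largest subobject of $N$ lying in $\Bild i$ (image of the counit) and $iq(N)$ as the largest such quotient (image of the unit). Then $\Ker\gamma_M \subseteq ip\ell(M)$ since $e(\Ker\gamma_M) = 0$ by (1), while $ip\ell(M) \to r(M)$ vanishes by $\Hom_\mcB(i(A), r(M)) = 0$, yielding the reverse inclusion and the first sequence; the second is dual, using that $c(M)$ has no quotients in $\Bild i$ by (3).

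The main obstacle will be (5), since the right adjoint $r$ does not in general commute with direct sums. My strategy is to use that $c$ preserves monos by (4), so the inclusions $M_\alpha \hookrightarrow \bigoplus M_\alpha$ induce a natural map $\varphi\colon \bigoplus c(M_\alpha) \to c(\bigoplus M_\alpha)$. Applying $e$ (which commutes with sums as a left adjoint) yields an isomorphism by (1), so $\Ker \varphi$ and $\coKer \varphi$ both lie in $\Bild i$. By (3), $c(\bigoplus M_\alpha)$ admits no quotients in $\Bild i$, so $\coKer \varphi = 0$. For the kernel, I would invoke the hypothesis that $\bigoplus \hookrightarrow \prod$ is mono: any subobject $i(A) \subseteq \bigoplus c(M_\alpha) \hookrightarrow \prod c(M_\alpha)$ has vanishing projection to each $c(M_\alpha)$ by (3) applied factorwise, forcing $i(A) = 0$ and $\varphi$ to be an isomorphism.
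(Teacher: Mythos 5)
Your argument is correct, but note that the paper does not actually prove this lemma: it is stated as a summary of results from Kuhn's and Franjou--Pirashvili's work, with a citation in place of a proof. So your self-contained derivation from the recollement axioms is necessarily a different route, and it holds up. Parts (1)--(4) and (6) are exactly the standard adjunction computations ($e\gamma_M$ is an isomorphism because its $(\ell,e)$-adjoint is $\rho_M^{-1}$; $\Hom_{\mcB}(i(A),r(M))=\Hom_{\mcC}(0,M)=0$ and dually; $\Ker \eprime=\Bild i$ is a Serre subcategory since $\eprime$ is exact), and your treatment of (5) is the right one: the hypothesis that $\bigoplus\to\prod$ is monic is used precisely to kill the kernel of $\bigoplus c(M_\al)\to c(\bigoplus M_\al)$, whose components into each $c(M_\al)\subseteq r(M_\al)$ vanish, while the cokernel dies by (3). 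The one place where you lean on something you do not establish is in (7): the identification of $ip(N)$ with the largest subobject of $N$ lying in $\Bild i$ (equivalently, with $\Ker(N\to r\eprime(N))$) is itself a non-trivial fact --- it is \cite[Proposition 4.2]{FP}, which the paper invokes separately in the proofs of Lemmas~\ref{stable} and \ref{costable}. It is derivable from the axioms (the counit $ip(N)\to N$ is monic because its kernel lies in $\Bild i$ and full faithfulness of $i$ forces it to vanish), but as written your proof of (7) quietly outsources this step; either prove it or cite it as the paper does. With that caveat, everything you wrote is sound.
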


The next lemma is essentially in \cite{FP}.

\begin{lem}
The application of the functor $\eprime $ on the following morphisms spaces 
\begin{itemize}
\item[(1)] $\Hom_{\mcB} (c(M), F) \to \Hom_{\mcC} (\eprime c(M), \eprime (F)) \cong \Hom_{\mcC} (M, \eprime (F))$ and  
\item[(2)] $\Hom_{\mcB}  (F,c(M)) \to \Hom_{\mcC} (\eprime (F), \eprime c(M)) \cong \Hom_{\mcC} (\eprime (F), M)$
\end{itemize}
are injective. In particular, the functor $c$ is fully faithful and preserves indecomposable objects. 
\end{lem}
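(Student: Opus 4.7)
The plan is to exploit the two short exact sequences of natural transformations from Lemma~\ref{IntExt}(7), namely $0\to ip\ell\to \ell\to c\to 0$ and $0\to c\to r\to iqr\to 0$, together with the adjunctions $(\ell,\eprime)$ and $(\eprime,r)$. The point is that these sequences exhibit $c(M)$ as a quotient of $\ell(M)$ and as a subobject of $r(M)$, so $\Hom_{\mcB}$-spaces out of or into $c(M)$ embed into $\Hom_{\mcB}$-spaces out of $\ell(M)$ or into $r(M)$, which by adjunction compute the relevant $\Hom_\mcC$-spaces. The main content will be checking that these embeddings agree with applying $\eprime$ and using $\eprime c(M)\cong M$.

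For~(1), apply $\Hom_{\mcB}(-,F)$ to the epimorphism $\ell(M)\twoheadrightarrow c(M)$ to get an injection $\Hom_{\mcB}(c(M),F)\hookrightarrow \Hom_{\mcB}(\ell(M),F)$, and compose with the adjunction isomorphism $\Hom_{\mcB}(\ell(M),F)\cong \Hom_{\mcC}(M,\eprime F)$. To identify this composition with the map induced by $\eprime$, I would note that $\eprime(ip\ell(M))=0$ (since $\eprime i=0$), so applying $\eprime$ to $\ell(M)\twoheadrightarrow c(M)$ is an isomorphism; combined with the adjunction unit $\la_M:M\to \eprime\ell(M)$ (which is an iso by condition~(2) of a recollement), one obtains exactly the natural identification $\eprime c(M)\cong M$ of Lemma~\ref{IntExt}(1). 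For~(2) the argument is dual: apply $\Hom_{\mcB}(F,-)$ to the monomorphism $c(M)\hookrightarrow r(M)$, compose with the adjunction isomorphism $\Hom_{\mcB}(F,r(M))\cong \Hom_{\mcC}(\eprime F,M)$ given by the counit $\rho_M:\eprime r(M)\to M$, and verify compatibility with $\eprime$ using that $\eprime(iqr(M))=0$.

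For the ``in particular'' statement, I would take $F=c(N)$ in~(1), obtaining an injection $\Hom_{\mcB}(c(M),c(N))\hookrightarrow \Hom_{\mcC}(M,N)$. Surjectivity is immediate once one observes that the map has a set-theoretic section: any $g\in\Hom_{\mcC}(M,N)$ lifts to $c(g)\in\Hom_{\mcB}(c(M),c(N))$, and the naturality of $\eprime c\cong\id_\mcC$ forces the composition $g\mapsto c(g)\mapsto \eprime c(g)$ to recover $g$. Hence $c$ is fully faithful. Since $c$ is additive (being the image of the natural transformation $\gamma:\ell\to r$ between additive functors), full faithfulness gives ring isomorphisms $\End_{\mcC}(M)\cong \End_{\mcB}(c(M))$, which preserve the property of having only trivial idempotents and therefore preserve indecomposability.

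The main (and only) subtlety is the identification of the two natural candidates for the map $\Hom_{\mcB}(c(M),F)\to \Hom_{\mcC}(M,\eprime F)$, one via precomposition with $\ell(M)\twoheadrightarrow c(M)$ and adjunction, the other via the functor $\eprime$ and the iso $\eprime c(M)\cong M$. This is a naturality diagram chase that uses nothing beyond the recollement axioms and the construction of $c$ as the image of $\gamma_M$; no further input is required.
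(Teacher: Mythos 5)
Your proposal is correct and follows essentially the same route as the paper: factor the canonical map $\gamma_M$ as an epimorphism $\ell(M)\twoheadrightarrow c(M)$ followed by a monomorphism $c(M)\hookrightarrow r(M)$, pass to Hom-spaces where these become injections, apply the two adjunction isomorphisms, and verify compatibility with $\eprime$ via the identification $\eprime c(M)\cong M$. Your full-faithfulness argument (that $\eprime$ on morphism spaces is injective by the first part and surjective because $g\mapsto c(g)\mapsto \eprime c(g)$ is the identity) is precisely the paper's, phrased in terms of a set-theoretic section rather than a composite being the identity.
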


\begin{proof}
Since $c(M)= \Bild (\ell (M) \to r(M))$ we have a factorization  
$ \ell (M) \xrightarrow{p_M} c(M) \xrightarrow{i_M} r(M) $
with $p_M$ epimorphism and $i_M$ monomorphism. Now, 
we just need to see that we have a commutative diagram 
\[
\xymatrix{ 
\Hom_{\mcB}(c(M) , F) \ar[r]^{\eprime }\ar@{=}[d] & \Hom_{\mcC}(\eprime c(M), \eprime (F))\ar[r] & \Hom_{\mcC}(M, \eprime (F))\ar@{=}[d]\\
           \Hom_{\mcB}(c(M) , F) \ar[r]^{- \circ p_M} & \Hom_{\mcB}(\ell (M) , F) \ar[r]^{\eprime}    & \Hom_{\mcC}(M, \eprime (F))
}
\]
where the $\eprime$ in the lower row is the adjunction isomorphism. 
Since $p_M$ is an epimorphism, the lower row is an injective linear map.
The diagram commutes since $e(p_M)$ is the identity. 
Analoguously, in (2) the morphism identifies with 
$
\Hom_{\mcB}(F,c(M) ) \xrightarrow{i_M\circ -}  \Hom_{\mcB}(F,r(M) ) \xrightarrow{\eprime}   \Hom_{\mcC}(M, \eprime (F)) 
$
where $\eprime $ stands for the adjunction isomorphism. Since $i_M$ is a monomorphism, the lower line is an injective linear map. 
For the fully faithfulness, the following map is the identity   
\[ 
 \Hom_{\mcC}(M,N) \to \Hom_{\mcB}(c(M), c(N)) \to  \Hom_{\mcC}(\eprime c(M), \eprime c (N)) \cong  \Hom_{\mcC}(M,N), 
\]
therefore $ \Hom_{\mcC}(M,N)\to \Hom_{\mcB}(c(M), c(N))$ is injective and the second map $\Hom_{\mcB}(c(M), c(N)) \to  \Hom_{\mcC}(M,N)$ is surjective. 
Now, the second map is injective by the previous part of the proof, therefore the second map is an isomorphism. It follows that also $ \Hom_{\mcC}(M,N)\to \Hom_{\mcB}(c(M),c(N))$ is an isomorphism. Since $c$ is fully faithful, it preserves indecomposables.
\end{proof}

\subsection*{Stable and costable objects} Associated to a recollement there are the notions of stable and costable objects
(see \cite[\S 2.6, \S 4.8]{KS} for the case of Kan extensions). 
We consider a recollement as above 
with intermediate extension functor $c$.
We characterize stable and costable objects in the following two lemmas. For an additive functor $f\colon \mcS\to\mcT $ we denote by $\Ker f$ the full subcategory in $\mcS$  whose objects are send to zero under~$f$. 

\begin{lemdef} \label{stable}
We say that an object $F$ in the middle category $\mcB$ is \emph{stable} (or \emph{$\eprime $-stable}) 
if one of the following equivalent conditions is fulfilled
\begin{itemize}
\item[(1)] $\Hom_{\mcB} (G, F) =0$ for every object $G$ with $\eprime (G)=0$. 
\item[(2)] The natural map $F\to r\eprime (F)$ is a monomorphism.
\item[(3)] Every non-zero subobject $F^\prime \subset F$ fulfills $\eprime (F^\prime )\neq 0$. 
\item[(4)] $F\in \Ker p$. 
\end{itemize}
From (3) it follows that every subobject of a stable object is stable. 
Furthermore, if $F$ is stable, then there is a (natural) monomorphism $ c \eprime (F)\to F$ (compare Lemma~\ref{IntExt} part (2)). 
\end{lemdef}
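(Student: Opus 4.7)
The plan is to use the basic dictionary of a recollement: the embedding property (4) of the axioms identifies the objects $G$ with $e(G)=0$ with those in the essential image of $i$, the adjunction $(i,p)$ converts statements about Hom-vanishing into statements about $p(F)$, and exactness of $e$ together with the triangle identity for $(e,r)$ gives that $e(\eta_F)\colon e(F)\to ere(F)$ is an isomorphism, where $\eta_F\colon F\to re(F)$ denotes the unit (i.e.\ the natural map in (2)). These three facts suffice to establish all four equivalences and the two subsequent claims.

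For (1)$\Leftrightarrow$(4), I would apply the adjunction $\Hom_\mcB(i(A),F)\cong \Hom_\mcA(A,p(F))$ and the identification of $\Ker e$ with the essential image of $i$: $F\in \Ker p$ iff every $A\in\mcA$ has $\Hom_\mcA(A,p(F))=0$ (set $A=p(F)$ for one direction), iff $\Hom_\mcB(G,F)=0$ for every $G$ with $e(G)=0$. For (1)$\Leftrightarrow$(3), a non-zero subobject $F'\subseteq F$ with $e(F')=0$ is itself a witness to the failure of (1); conversely, given any $\phi\colon G\to F$ with $e(G)=0$, exactness of $e$ forces $e(\Bild\phi)=0$, so (3) implies $\Bild\phi=0$. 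For (3)$\Leftrightarrow$(2), the key observation is that the triangle identity gives $\rho_{e(F)}\circ e(\eta_F)=\id_{e(F)}$ with $\rho$ an isomorphism by axiom (2) of a recollement, so $e(\eta_F)$ is an isomorphism and, since $e$ is exact, $e(\Ker\eta_F)=0$. Then (3) implies $\Ker\eta_F=0$, which is (2); conversely, any $F'\subseteq F$ with $e(F')=0$ composes to zero in $re(F)$ by the adjunction $\Hom_\mcB(F',re(F))\cong\Hom_\mcC(e(F'),e(F))$, so $F'\subseteq\Ker\eta_F=0$.

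The claim that every subobject of a stable object is stable is then immediate from (3), since a subobject of a subobject is a subobject. For the monomorphism $ce(F)\to F$, I would set $M=e(F)$ and use the commutative triangle $\beta_F\circ\alpha_F=\gamma_M$ from the discussion preceding Lemma~\ref{IntExt}, where $\alpha_F\colon \ell e(F)\to F$ and $\beta_F=\eta_F\colon F\to re(F)$. By definition $ce(F)=\Bild\gamma_M\subseteq re(F)$. Since $F$ is stable, $\beta_F$ is a monomorphism, so the image $I:=\Bild\alpha_F\subseteq F$ is sent isomorphically by $\beta_F$ onto $\beta_F(I)=\Bild(\beta_F\alpha_F)=\Bild\gamma_M=ce(F)$. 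Inverting $\beta_F|_I$ yields the desired natural monomorphism $ce(F)\to F$, and naturality in $F$ is automatic from the naturality of $\alpha$, $\beta$ and $\gamma$.

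The main obstacle is really just bookkeeping in step three: one has to remember that the exactness of $e$ applied to $\eta_F$ uses the triangle identity in a specific way, and one has to be careful to distinguish the two natural transformations $\lambda$ and $\rho$ of the recollement from the adjunction units and counits involved here. Everything else is a direct unwinding of adjunctions.
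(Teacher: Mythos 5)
Your proof is correct and covers the same equivalences, but you take a genuinely more self-contained route for condition (4). The paper establishes (1)$\Rightarrow$(3)$\Rightarrow$(2)$\Rightarrow$(1) essentially as you do, but for condition (4) it cites Franjou--Pirashvili (their Propositions 4.2 and 4.9) for the facts that $\Ker(F\to re(F))=ip(F)$ and that $\Ker p\subseteq$ the stable class. You instead prove (1)$\Leftrightarrow$(4) directly from the adjunction $\Hom_{\mcB}(i(A),F)\cong\Hom_{\mcA}(A,p(F))$ together with the recollement axiom identifying $\Ker e$ with the essential image of $i$, using the standard trick of specializing $A=p(F)$. This avoids the external citation entirely and is arguably cleaner, at the modest cost of re-deriving what [FP] already states. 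Your treatment of the monomorphism $ce(F)\to F$ is also more explicit than the paper's, which simply points to Lemma~\ref{IntExt}(2) without spelling out that stability makes $\beta_F$ a monomorphism onto a subobject isomorphic to $\Bild\gamma_M=ce(F)$; your argument amounts to a restatement of that lemma in the present special case, and both are sound.
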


\begin{proof}
Assume $F$ fulfills (1). Let $F^\prime \subset F$ be a non-zero subobject (if it exists). By (1) it follows that $e(F^\prime )\neq 0$, so (3) is fulfilled.
Assume $F$ fulfills (3). Let $F^\prime \subset F$ be the kernel of the map $F\to r\eprime (F)$. Since $\eprime $ is exact, we get $e(F^\prime)=0$, so by (3) it follows $F^\prime =0$. Therefore, (2) is fulfilled. 
Assume $F$ fulfills (2). Let $G$ be an object with $e(G)=0$. Using (2) we get a monomorphism 
\[
0\to \Hom_{\mcB} (G,F) \to \Hom_{\mcB} (G, re(F)) 
\]
and by the adjointness $\Hom_{\mcB} (G, re(F)) = \Hom_{\mcC} (e(G), e(F)) =0$. Therefore $\Hom_{\mcB} (G,F)=0$ and (1) is fulfilled. 
Now, (4) implies (2) by \cite[Proposition 4.9]{FP}. On the other hand, from loc.\ cit.\ Proposition 4.2 we know that the kernel of $F\to r\eprime (F)$ is precisely $ip (F)$. So, if $F \to r\eprime (F)$ is a monomorphism, then $ip (F)=0$ and since $i$ is fully faithful, it follows $p(F)=0$. 
\end{proof}

\begin{lemdef} \label{costable}
We say that an object $H$ in the middle category $\mcB$ is \emph{costable} (or \emph{$\eprime $-costable}) if one of the following equivalent conditions is fulfilled 
\begin{itemize}
\item[(1)]
$\Hom_{\mcB} (H,G) =0$ for every $G$ with $\eprime (G)=0$.  
\item[(2)] The natural map $\ell \eprime (H) \to H$ is an epimorphism. 
\item[(3)] Every non-zero quotient object $H\surj H^\prime$ fulfills $\eprime (H^\prime) \neq 0$.
\item[(4)] $H\in \Ker q$.
\end{itemize} 
From (3) it follows that every quotient of a costable object is costable again. 
Furthermore, if $H$ is costable, then there is a (natural) epimorphism $H\to c\eprime  (H)$ (compare Lemma~\ref{IntExt} part (2)). 
\end{lemdef}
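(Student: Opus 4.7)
The proof is entirely dual to that of Lemma/Definition~\ref{stable}, obtained by replacing subobjects with quotients, kernels with cokernels, the natural map $F\to r\eprime(F)$ with $\alpha_H\colon\ell\eprime(H)\to H$, and swapping the adjunctions $(\eprime,r)$ and $(\ell,\eprime)$. Concretely, I would establish the cycle (1)$\Rightarrow$(3)$\Rightarrow$(2)$\Rightarrow$(1) and then the equivalence (4)$\Leftrightarrow$(1). For (1)$\Rightarrow$(3): a non-zero quotient $H\surj H^\prime$ with $\eprime(H^\prime)=0$ is itself a non-zero morphism in $\Hom_{\mcB}(H,H^\prime)$, contradicting~(1). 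For (3)$\Rightarrow$(2): applying $\eprime$ to the cokernel $C$ of $\alpha_H$ gives $\eprime(C)=0$ by exactness of $\eprime$ and the recollement isomorphism $\eprime\ell\cong\id_{\mcC}$, whence (3) forces $C=0$. For (2)$\Rightarrow$(1): if $\eprime(G)=0$, then the epi $\alpha_H$ and the adjunction $(\ell,\eprime)$ give an injection $\Hom_{\mcB}(H,G)\hookrightarrow\Hom_{\mcB}(\ell\eprime(H),G)\cong\Hom_{\mcC}(\eprime(H),\eprime(G))=0$. Finally, via the adjunction $(q,i)$ and full faithfulness of $i$, $q(H)=0$ is equivalent to $\Hom_{\mcB}(H,i(A))=0$ for all $A\in\mcA$, which is (1); hence (4)$\Leftrightarrow$(1). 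The quotient-of-costable assertion is then immediate from (3).

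It remains to construct the natural epimorphism $H\surj c\eprime(H)$. Set $M=\eprime(H)$. By (2), $\alpha_H\colon\ell(M)\surj H$ is epic, so $H\cong\ell(M)/\Ker\alpha_H$ will map onto $\ell(M)/\Ker p_M=c(M)$ provided $\Ker\alpha_H\subseteq\Ker p_M$, where $p_M\colon\ell(M)\surj c(M)$ is the canonical epi coming from the factorization $\gamma_M=i_M\circ p_M$. The triangle identity together with $\eprime\ell\cong\id$ makes $\eprime(\alpha_H)$ an isomorphism, so exactness of $\eprime$ yields $\eprime(\Ker\alpha_H)=0$. The adjunction $(\eprime,r)$ then gives $\Hom_{\mcB}(\Ker\alpha_H,r(M))\cong\Hom_{\mcC}(0,M)=0$, so the composition $\Ker\alpha_H\hookrightarrow\ell(M)\xrightarrow{\gamma_M}r(M)$ vanishes; since $i_M$ is monic, this also kills $\Ker\alpha_H\xrightarrow{p_M}c(M)$, giving the required kernel containment. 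Naturality is inherited from $\alpha$ and $p$.

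The main obstacle is this last construction: the equivalences (1)--(4) and the quotient assertion are routine dualizations, but producing the natural epi to $c\eprime(H)$ requires combining costability with the universal property of $c(M)$. The key observation is that any subobject of $\ell(M)$ annihilated by $\eprime$ admits no morphism into $r(M)$ (by the adjunction $(\eprime,r)$), and passing through $r(M)$ is the cleanest way to detect the kernel containment $\Ker\alpha_H\subseteq\Ker p_M$.
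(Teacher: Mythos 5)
Your proof is correct and the cycle $(1)\Rightarrow(3)\Rightarrow(2)\Rightarrow(1)$ matches the paper's argument essentially verbatim (the paper even has a small typo in its $(3)\Rightarrow(2)$ step, writing ``by (2)'' where it means ``by (3)''; you have the logic right).

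Where you diverge from the paper is in two places. First, for condition (4): the paper proves $(4)\Leftrightarrow(2)$ by citing two propositions from the reference \cite{FP} (namely that the cokernel of $\ell\eprime(H)\to H$ is $iq(H)$, and that $\Ker q$-objects satisfy (2)). You instead give a self-contained argument for $(4)\Leftrightarrow(1)$: $q(H)=0$ iff $\Hom_\mcA(q(H),A)=0$ for all $A$ iff $\Hom_\mcB(H,i(A))=0$ for all $A$ (by the adjunction $(q,i)$), and the latter is (1) because, by recollement axiom (4), the essential image of $i$ is exactly $\Ker\eprime$. This is cleaner and more elementary than invoking \cite{FP}; the one nitpick is that what you actually need is recollement axiom (4), not ``full faithfulness of $i$'' as such.

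Second, for the natural epimorphism $H\twoheadrightarrow c\eprime(H)$: the paper gives no proof, simply pointing to Lemma~\ref{IntExt}(2). Your argument is correct, but it is more roundabout than necessary. Recall the commutative triangle stated just before Lemma~\ref{IntExt}: for any $N$ with $\eprime(N)=M$ one has $\gamma_M = \beta_N\circ\alpha_N$. Applied to $N=H$, this factorization immediately gives $\Ker\alpha_H\subseteq\Ker\gamma_M=\Ker p_M$ (using that $i_M$ is monic), so the surjection $H=\ell(M)/\Ker\alpha_H\twoheadrightarrow\ell(M)/\Ker p_M=c(M)$ falls out without needing the detour through $\eprime(\Ker\alpha_H)=0$ and the adjunction $(\eprime,r)$. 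Equivalently, since $\alpha_H$ is epi, $\Bild\gamma_M=\Bild(\beta_H\alpha_H)=\Bild\beta_H$, and $H\twoheadrightarrow\Bild\beta_H=c(M)$ directly, which is exactly the content of Lemma~\ref{IntExt}(2) in the costable case. Your longer route is sound but obscures that the key fact is already sitting in the diagram the paper set up.
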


\begin{proof}
Assume $H$ fulfills (1). Let $H^\prime$ be a non-zero quotient of $H$. By (1) we get $\eprime (H^\prime )\neq 0$ and (3) is fulfilled. 
Assume $H$ fulfills (3). Let $H^\prime$ be the cokernel of the map $\ell \eprime (H) \to H$. Since $e$ is exact, we get $\eprime (H^\prime )=0$, so by (2) it follows $H^\prime =0$. 
Assume $H$ fulfills (2). Let $G$ be an object with $\eprime (G)=0$. Using (2) we get a monomorphism 
\[ 
0\to \Hom_{\mcB} (H,G) \to \Hom_{\mcB}(\ell \eprime (H), G)
\]
and by the adjointness $\Hom_{\mcB} (\ell \eprime (H), G) =\Hom_{\mcC} (\eprime (H), \eprime (G))=0$ and therefore $\Hom_{\mcB} (H,G)=0$, so (1) is fulfilled. 
Now, (4) implies (2) by \cite[Proposition 4.9]{FP}. On the other hand, from loc.\ cit.\ Proposition 4.2 we know that the cokernel of $\ell\eprime (H)\to H$ is precisely $iq (H)$. So, if it is an epimorphism, then $iq (H)=0$ and since $i$ is fully faithful, it follows $q(H)=0$. 
\end{proof}

\begin{dfn}
\label{d:bistable}
We say $F$ is \emph{bistable} if it is stable and costable. 
\end{dfn}

It follows directly from Lemma \ref{IntExt} that $F$ is bistable if and only if $F\cong c\eprime (F)$. 
Now, every recollement of abelian categories with middle term $\mcB$ is determined by its associated TTF-triple, which is a triple $(\mcX , \mcY , \mcZ)$ of subcategories of $\mcB$  such that $(\mcX , \mcY)$ and $(\mcY , \mcZ)$ are torsion pairs, compare e.g. \cite{PV}.
For our given recollement, the TTF-triple is 
\[(\ker q , \Ker \eprime , \Ker p)= (\text{costables}, \Ker \eprime , \text{stables}).\]

\subsection*{Functor categories}
We are interested in \emph{Krull-Schmidt categories}, by which we mean a small additive $K$-category $\mcR$
with finite-dimensional Hom sets and split idempotents.
We denote by $\RKmod$ the category of $K$-linear contravariant functors 
$\mcR\to \text{$K$-$\Modu$}$. 
%(in Auslander's work this category is usually called $\left( \mathcal{C}^{op}, K\!- \! {\rm mod}\right)  $). 
If $X$ is an object in $\mcR$, 
then the representable functor $\Hom_{\mcR}(-,X)$ is a projective object of $\RKmod$,
and projective objects isomorphic to one of this form are said to be \emph{finitely generated}.
The functor $D\Hom_{\mcR}(X,-)$ is an injective object of $\RKmod$, and
injective objects of this form are said to be \emph{finitely cogenerated}.

Any $K$-linear functor $f\colon \mcS\to\mcR$ of Krull-Schmidt categories induces a restriction functor
$\RKmod\to\SKmod$.
Using tensor products over categories, and the usual hom-tensor adjointness,
one obtains left and right adjoints $\ell,r\colon \SKmod\to\RKmod$ given by
\[
\ell(F)(X) = \Hom_{\mcR}(X,f(-))\otimes_{\mcS} F,
\quad
r(F)(X) = \Hom_{\SKmod}(\Hom_{\mcR}(f(-),X),F)
\]
for $F\in \SKmod$ and $X\in \mcR$.

Recall that if $\mcS$ is a full Krull-Schmidt subcategory of $\mcR$, 
then $\mcR /\mcS$ denotes the quotient category whose objects are the same as in 
$\mcR$ and with morphisms given by the quotient vector space
\[ 
\Hom_{\mcR/\mcS} (X, X^\prime ) := \Hom_{\mcR}(X,X^\prime)/ I_{\mcS}(X, X^\prime)
\]
where $I_{\mcS}(X,X^\prime)$ is the vector subspace with elements 
the morphisms $X\to X^\prime$ which factor through an object in $\mcS$. 
In this situation we have a recollement of the following form.

\begin{lem}
For $\mcR$ a Krull-Schmidt category and $\mcS$ a full additive subcategory of $\mcR$,
there is a recollement
\[ 
\xymatrix{
\RSKmod \ar[rr]|{i} \;  && \; \RKmod \; \ar@<2ex>[ll]^{p} \ar@<-2ex>[ll]_{q} \ar[rr]|{\eprime}  && \; \SKmod \ar@<2ex>[ll]^{r} \ar@<-2ex>[ll]_{\ell}.
}
\]
where $\eprime$ is the restriction functor given by the 
inclusion $\mcS\subset \mcR$
and $i$ is given by composition with the natural functor $\mcR\to \mcR/\mcS$. 
\end{lem}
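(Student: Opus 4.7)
The plan is to construct the six functors as Kan extensions and verify the four recollement axioms. Two facts drive everything: the inclusion $\iota\colon \mcS \hookrightarrow \mcR$ is fully faithful, and the quotient functor $\pi\colon \mcR \to \mcR/\mcS$ is identity on objects and full (surjective on Hom-sets). I would set $\eprime$ to be restriction along $\iota$, with left and right adjoints $\ell = \iota_!$ and $r = \iota_*$ given by the tensor and cotensor formulas recalled earlier in the excerpt, producing the adjoint pairs $(\ell,\eprime)$ and $(\eprime,r)$. For $i$ I would take precomposition with $\pi$, which is well-defined since $\pi$ is identity on objects, and describe its adjoints $q = \pi_!$ and $p = \pi_*$ explicitly on $F \in \RKmod$ by
\[
q(F)(X) \;=\; F(X)\Big/\sum_{g}\Ima(F(g)), \qquad p(F)(X) \;=\; \bigcap_{h}\Ker(F(h)),
\]
where $g$ runs over morphisms $X \to Y$ in $\mcR$ with $Y \in \mcS$, and $h$ over morphisms $Z \to X$ with $Z \in \mcS$. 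Both descend to functors on $\mcR/\mcS$ because $I_{\mcS}$ is a two-sided ideal of morphisms.

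Next I would verify the four axioms. Axiom (1) is the standard Kan extension adjunction applied to both $\iota$ and $\pi$. For axiom (2), the isomorphisms $\id \cong \eprime\ell$ and $\eprime r \cong \id$ follow from the fact that $\iota$ is fully faithful, a standard property of Kan extensions. For axiom (3), $i$ is fully faithful because $\pi$ is identity on objects and full, so any natural transformation $iG_1 \to iG_2$ consists of $K$-linear maps $G_1(X) \to G_2(X)$ that are automatically natural for morphisms in $\mcR/\mcS$; together with the explicit formulas this gives $qi \cong \id$ and $\id \cong pi$, since $iG$ already kills $I_{\mcS}$-morphisms so neither the quotient nor the subobject introduces anything new. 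For axiom (4), a functor $F \in \RKmod$ factors through $\pi$ iff every morphism in $\mcR$ factoring through $\mcS$ acts as zero on $F$; taking $\id_Y$ for $Y \in \mcS$ and using additivity, this is equivalent to $F(Y) = 0$ for all $Y \in \mcS$, i.e.\ $\eprime F = 0$.

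The main obstacle is verifying that the explicit formulas for $q$ and $p$ really yield functors on $\mcR/\mcS$ and really are adjoint to $i$. Functoriality on $\mcR/\mcS$ is bookkeeping with the ideal property of $I_{\mcS}$: given $f\colon X \to X'$ in $\mcR$ and $g\colon X' \to Y$ factoring through $\mcS$, one has $gf$ also factoring through $\mcS$, so $F(f)$ sends the defining subspace of $q$ into itself and the one of $p$ into itself. The adjunctions can then be checked termwise from the formulas via Yoneda, or deduced formally by identifying $q, p$ with the left and right Kan extensions along $\pi$. Exactness of $i$ and $\eprime$ is automatic once both admit two-sided adjoints, completing the verification.
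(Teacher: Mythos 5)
The paper states this lemma without proof, treating it as a standard fact about functor categories, so there is no argument of the paper to compare against. Your construction via Kan extensions along $\iota$ and $\pi$ is correct and complete: the formulas $\ell = \iota_!$, $r = \iota_*$ are exactly the tensor/cotensor formulas the paper records, the explicit descriptions of $q(F)$ and $p(F)$ as the quotient by the trace of $\mcS$ and the reject of $\mcS$ (respectively) are right, the ideal property of $I_{\mcS}$ gives descent to $\mcR/\mcS$, and the verifications of the four axioms are sound; the only stylistic remark is that in the axiom (4) step the "additivity" invocation is unnecessary, since evaluating a functor $F$ that kills $I_{\mcS}$ at $\id_Y$ for $Y\in\mcS$ already forces $F(Y)=0$, and the converse is the composition $F(f)=F(f_1)F(f_2)$ factoring through $F(Y)=0$.
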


%\begin{proof}
%From \cite[Prop. 1.1 ]{AR}, we get that $i$ is fully faithful. 
%\end{proof}

\section{The categories $\mcQ$ and $\mcH$ and two auxiliary recollements}
\label{s:qhdef}

%--------------------------------------------------------------
%%%%%% Section 2 - the cats Q and Q/E --------------------------
%-------------------------------------------------------------

Let $A$ be a basic finite-dimensional $K$-algebra and
let $e_1,\dots,e_n$ be a complete set of primitive orthogonal idempotents in $A$.
Since $A$ is basic, the modules $P_i = Ae_i$ are a complete set of non-isomorphic indecomposable
projective $A$-modules and their tops $S_i$ are a complete set of non-isomorphic simple $A$-modules.

%We denote by $A$-$\modu$ the category of finitely generated left $A$-modules. By convention an $A$-module is always understood as an object in $A$-$\modu$. We will write $S_1, \ldots , S_n$ for the simple $A$-modules (up to isomorphisms) and $P_i \to S_i$, $1\leq i\leq n$ for their respective projective cover. 

\subsection*{The category $\mcQ$ of quotients of projectives and a first auxiliary recollement}

Let $\mcQ$ be the category defined as in the introduction.
Let $\mathcal{E}$ be the full subcategory of $\mcQ$ consisting of the objects $1\colon P\to P$,
and let $\mathcal{P}$ be the full subcategory of $\mcQ$ consisting of the objects $0\colon P\to 0$.
The following is clear.

\begin{lem}\label{initial}
The category $\mcQ$ is Krull-Schmidt and its indecomposable objects are of the following three types up to isomorphism
\begin{itemize}
\item[(1)] a projective cover $f_U\colon P_U \to U$ of a non-projective indecomposable $A$-module $U$,
\item[(2)] indecomposables in $\mcE$, so of the form $1\colon P_i\to P_i$ for $1\leq i\leq n$, 
\item[(3)] indecomposables in $\mcP$, so of the form $0\colon P_i \to 0$ for $1\leq i\leq n$. 
\end{itemize}
\end{lem}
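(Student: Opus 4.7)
The plan is to establish Krull-Schmidt first and then use a projective-cover splitting argument to show that every object of $\mcQ$ decomposes as a projective cover plus a sum of objects in $\mcP$. The three listed types will then exhaust the indecomposables.

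For Krull-Schmidt, the Hom spaces are clearly finite-dimensional, being subspaces of $\Hom_A(P,P')\times\Hom_A(X,X')$, so it suffices to check that idempotents split. Given an idempotent endomorphism $(t,s)$ of $f\colon P\to X$ in $\mcQ$, both $t$ and $s$ are idempotents in $A$-$\modu$, which is Krull-Schmidt, so they split: $P = P_1\oplus P_2$ and $X = X_1\oplus X_2$ with $t$ and $s$ the projections onto the first factors. The intertwining $sf=ft$ forces $f$ to be block-diagonal, $f = f_1\oplus f_2$ with $f_i\colon P_i\to X_i$. Each $P_i$ is projective as a summand of $P$, and each $f_i$ is surjective since $f$ is, so both summands lie in $\mcQ$, and the idempotent splits.

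For the classification, let $\pi\colon P_X\to X$ be a projective cover of $X$. Since $P$ is projective, lift $f$ to $g\colon P\to P_X$ with $\pi g=f$. Then $\pi(\Ima g)=\Ima f=X$ and $\Ker\pi\subseteq \rad P_X$, so by Nakayama $\Ima g=P_X$; as $P_X$ is projective, $g$ splits, and writing $P=P_X\oplus Q$ with $g$ the projection, we get $f|_{P_X}=\pi$ and $f|_Q=\pi g|_Q=0$. Hence
\[
(P\xrightarrow{f} X)\;\cong\;(P_X\xrightarrow{\pi} X)\;\oplus\;(Q\xrightarrow{0}0)
\]
in $\mcQ$. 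If $f$ is indecomposable, either $Q=0$ and $\pi$ is a projective cover of an indecomposable $X$ (since any decomposition of $X$ would split $\pi$ into projective covers of the summands), giving case (1) when $X$ is non-projective and case (2) when $X=P_i$ is indecomposable projective, or else $X=0$ and $Q$ is an indecomposable projective, giving case (3). The three families are pairwise non-isomorphic: cases (1) and (2) have $X\neq 0$ while case (3) does not, and within the first two, case (2) is distinguished by having $\Ker f=0$.

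The only step of substance is the projective-cover lifting-and-splitting, which rests on Nakayama's lemma; the Krull-Schmidt verification and the non-isomorphism of the three families are formal bookkeeping.
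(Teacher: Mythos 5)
Your proof is correct, and it supplies the details that the paper omits (the paper simply states the lemma is ``clear'' with no argument given). Both the idempotent-splitting check for Krull--Schmidt and the lift-and-split decomposition $(P\to X)\cong(P_X\to X)\oplus(Q\to 0)$ via Nakayama are the standard arguments the authors evidently had in mind, and your verification that the three families exhaust the indecomposables and are pairwise non-isomorphic is complete.
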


%Clearly $\mcT$ is equivalent to the category of finitely generated left modules for the algebra of $2\times 2$ upper triangular matrices with entries in $A$. In particular it is an abelian category

\begin{rem}
Clearly $\mcQ$ is a full subcategory of the category $\mcT$ whose 
objects are the morphisms $f\colon Y\to X$ in $A$-$\modu$
and whose morphisms are given by commutative diagrams.
Now $\mcT$ is abelian, indeed it is equivalent to the category of finite-dimensional modules for
the algebra of $2\times 2$ upper triangular matrices with entries in $A$.
It is not difficult to see that $\mcQ$ is
functorially finite and extension-closed  in $\mcT$. Thus it is an exact category and,
by \cite[Theorem 2.4]{AS}, 
the category $\mcQ$ has Auslander-Reiten sequences.
It is easy to see that 
the indecomposable Ext-projectives are the objects of the form $1\colon P_i\to P_i$ and $0\colon P_i\to 0$,
and the indecomposable Ext-injectives are the objects of the form $f_{I_i}\colon P_{I_i}\to I_i$ and $0\colon P_i\to 0$, where $I_i$ is the injective envelope of $S_i$. 
\end{rem}

%\subsection{The intermediate extension $\cq$ in $\QKmod$}
Considering $\mcP$ as a full subcategory of $\mcQ$,
the first auxiliary recollement we consider is
\[ 
\xymatrix{
\QPKmod \ar[rr]|{i'} \;  && \; \QKmod \; \ar@<2ex>[ll]^{p'} \ar@<-2ex>[ll]_{q'} \ar[rr]|>>>>>>>>>{e'}  && \; 
\text{$A$-$\Modu$} \ (\cong \PKmod)
\ar@<2ex>[ll]^{r'{\!\!\quad\quad \quad }} \ar@<-2ex>[ll]_{\ell'{\!\!\quad\quad \quad }}.
}
\]
The functor $e'$ is given by $e'(F) = F(A\to 0)$ with its induced $A$-module structure.

\begin{thm}
The adjoints $\ell',r'\colon \text{$A$-$\Modu$}\to\QKmod$ of $e'$ are given by
\[ 
\begin{aligned}
%\ellq \colon  A\!-\!\Modu &\to \QKmod, &&
\ellq (M) (f\colon P\to X) 
&=
\Hom_A (P,M) \\
%\cong \Hom_A(P,A)\otimes_A M\\
%\rQ \colon A\!-\!\Modu &\to \QKmod,  &&
\rQ(M) (f\colon P\to X)
&=
%\Hom_A (\Hom_{\mcQ}(A\to 0, P\to X), M) = 
\Hom_A(\Ker f,M)
\end{aligned} 
\]
and the intermediate extension functor $c'\colon \text{$A$-$\Modu$}\to\QKmod$ is given by
\[ 
c' (M)(f\colon P\to X) = \coKer (\Hom_A(X,M) \to \Hom_A(P,M)). 
\]
\end{thm}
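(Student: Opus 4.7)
The plan is to apply the general recollement formula for functor categories from the last lemma of Section~2, taking $\mcR = \mcQ$ and $\mcS = \mcP$. We use the standard equivalence $\PKmod \simeq A\text{-}\Modu$ sending a functor $F$ to the $A$-module $F(A \to 0)$ (with the $A$-action coming from $\End_{\mcP}(A \to 0) \cong A$). Under this equivalence one has $F(P \to 0) \cong \Hom_A(P, M)$ naturally in the projective $P$, where $M$ is the module corresponding to~$F$.

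For the left adjoint $\ellq$, a morphism $(f\colon P \to X) \to (P' \to 0)$ in $\mcQ$ is a pair $(t,s)$ with $s\colon X \to 0$ (so $s=0$) and $t\colon P \to P'$ arbitrary; hence $\Hom_{\mcQ}((f), (P'\to 0)) = \Hom_A(P, P') = \Hom_{\mcP}((P \to 0), (P'\to 0))$. As a covariant functor of $(P' \to 0)$ on $\mcP$ this is representable, so Yoneda collapses the tensor product in the general formula to give $\ellq(M)(f\colon P \to X) = F(P \to 0) = \Hom_A(P, M)$.

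For the right adjoint $\rQ$, a morphism $(P' \to 0) \to (f\colon P \to X)$ in $\mcQ$ is a map $t\colon P' \to P$ with $ft=0$, i.e.\ a map $P' \to \Ker f$, so $\Hom_{\mcQ}((P'\to 0), (f)) = \Hom_A(P', \Ker f)$. As a contravariant functor of $(P'\to 0)$ on $\mcP$, this corresponds under the equivalence to the $A$-module $\Ker f$. Translating $\Hom_{\PKmod}$ into $\Hom_A$ via the same equivalence then yields $\rQ(M)(f\colon P\to X) = \Hom_A(\Ker f, M)$.

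Finally, $c'(M)(f)$ is by definition the image of $\gamma_M(f)\colon \Hom_A(P, M) \to \Hom_A(\Ker f, M)$. The key step is to identify $\gamma_M(f)$ with the restriction map $\phi \mapsto \phi|_{\Ker f}$, and this is the main subtlety. Since $e'(\gamma_M)$ is an isomorphism, $\gamma_M$ is the identity on objects of $\mcP$. Moreover, every morphism $(P' \to 0) \to (f\colon P\to X)$ in $\mcQ$ comes from some $t\colon P' \to \Ker f$, and elements of $\Hom_A(\Ker f, M)$ are detected by pulling back along such $t$ (for instance with $P'$ a projective cover of $\Ker f$). Naturality of $\gamma_M$ along these morphisms then forces $\gamma_M(f)$ to be restriction. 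Applying $\Hom_A(-, M)$ to the short exact sequence $0 \to \Ker f \to P \to X \to 0$ produces
\[
0 \to \Hom_A(X, M) \to \Hom_A(P, M) \to \Hom_A(\Ker f, M),
\]
so the image of $\gamma_M(f)$ is $\coKer(\Hom_A(X, M) \to \Hom_A(P, M))$, as claimed.
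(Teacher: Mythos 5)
Your proof is correct but takes a genuinely different route from the paper's. The paper verifies the two adjunctions directly: for $\ell'$ it observes that the proposed formula is right exact and commutes with direct sums, so it suffices to check the adjunction on $M = A$, where it collapses by Yoneda because $\ell'(A) \cong \Hom_{\mcQ}(-,(A\to 0))$; for $r'$ it uses that $e'$ is right exact and commutes with direct sums to reduce to representable functors, again by Yoneda. You instead instantiate the general formulas
$\ell(F)(X)=\Hom_{\mcR}(X,f(-))\otimes_{\mcS}F$ and $r(F)(X)=\Hom_{\SKmod}(\Hom_{\mcR}(f(-),X),F)$
from the functor-category lemma with $\mcR=\mcQ$, $\mcS=\mcP$, and then compute the Hom-sets
$\Hom_{\mcQ}((f),(P'\to 0))=\Hom_A(P,P')$ and $\Hom_{\mcQ}((P'\to 0),(f))=\Hom_A(P',\Ker f)$ explicitly, letting the (co-)Yoneda lemma collapse the tensor and Hom. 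Both routes are correct and roughly the same length; yours is arguably more systematic in that it makes the general machinery do the work.

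For $c'$, the paper simply states the image-equals-cokernel isomorphism, implicitly assuming $\gamma_M$ can be taken to be the restriction map. Your naturality argument makes that identification explicit: since morphisms $(P'\to 0)\to(f\colon P\to X)$ in $\mcQ$ are exactly maps $t\colon P'\to\Ker f$, naturality against a projective cover $t\colon P'\twoheadrightarrow\Ker f$ pins down $\gamma_M(f)$ up to an automorphism of the target (coming from the isomorphism $e'(\gamma_M)$), which does not affect the image. One small pedantic caveat: strictly speaking $\gamma_M$ is the restriction map only up to this invertible factor, so ``forces $\gamma_M(f)$ to be restriction'' should be read as ``up to a natural automorphism of $r'(M)$,'' but this is harmless for the image computation and your conclusion stands.
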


\begin{proof}
For the moment let $\ell'$, $r'$ and $c'$ be defined by the formulas in the statement of the theorem.
Since $P$ is finitely generated and projective, $\ell'$ is right exact and commutes with direct sums.
Thus, to show it agrees with the left adjoint, it suffices to check this on the free module $A$, that is, 
$\Hom_{\mcQ}(\ell'(A),F)\cong \Hom_A(A,e'(F))$.
This follows from Yoneda's Lemma, since $\ell'(A) \cong \Hom_{\mcQ}(-,(A\to 0))$.

Since $e'$ is right exact and commutes with direct sums, by considering a projective presentation
of $F\in\QKmod$ by direct sums of representable functors, in order to prove that $r'$ is right
adjoint to $e'$ it suffices to prove that
$\Hom_{\QKmod} (R,r'(M)) \cong \Hom_A (e'(R),M)$
for any representable functor $R = \Hom_{\QKmod}(-,(f\colon P\to X))$.
This is clear since on the left hand side Yoneda's Lemma
gives
\[
\Hom_{\QKmod} (R,r'(M)) \cong r'(M)(f\colon P\to X) = \Hom_A(\Ker f,M)
\]
and on the right hand side we have
$e'(R) = \Hom_{\QKmod}((A\to 0),(f\colon P\to X)) \cong \Ker f$.

Now the isomorphism
\[
\Bild(\Hom_A(P,M)\to \Hom_A(\Ker f,M)) \cong \coKer (\Hom_A(X,M) \to \Hom_A(P,M)).
\]
gives the result for $c'$.
\end{proof}

\subsection*{The homotopy category $\mcH$ and a second auxiliary recollement}

We define $\mcH = \mcQ / \mcE$.
It is easy to see that a morphism $(\theta,\phi)$
from the object $f\colon P\to X$ to the object $f'\colon P'\to X'$ 
in $\mcQ$ factors through $\mcE$ if and only if
there is a map $h\colon X\to P'$ with $\theta = h f$
and $\phi = f' h$.
Thus, if one considers $\mcQ$ as a category of complexes with two terms, then $\mcH$ is the
corresponding homotopy category.
The category $\mcH$ is Krull-Schmidt and up to isomorphism its indecomposable objects are those of types (1) and (3)
in Lemma~\ref{initial}.
Associated to the quotient $\mcH = \mcQ/\mcE $ there is a second auxiliary recollement
\[ 
\xymatrix{
\HKmod \ar[rr]|{i_0} \;  && \; \QKmod \; \ar@<2ex>[ll]^{p_0} \ar@<-2ex>[ll]_{q_0} \ar[rr]|>>>>>>>>>{e_0}  && \; 
\text{$A$-$\Modu$} \ (\cong \EKmod)
\ar@<2ex>[ll]^{r_0{\!\!\quad\quad \quad }} \ar@<-2ex>[ll]_{\ell_0{\!\!\quad\quad \quad }}.
}
\]

%Recall, that the natural functor $i_0\colon \HKmod \to \QKmod$ is fully faithful and has a left adjoint $q_0$ and a right adjoint $p_0$. 
%It is an easy observation, that \[ \EXT^1_{\HKmod}(F,G) =\EXT^1_{\QKmod}(i_0(F), i_0(G)), \] but this is not necessarily true for the higher extension spaces. 

\begin{lem} \label{qzero-pzero}
The functor $q_0\colon \QKmod\to \HKmod$ sends
a representable functor
$\Hom_{\QKmod}(-,(P\to X))$ to the representable
functor
$\Hom_{\HKmod}(-,(P\to X))$.
%The functor $p_0 \colon  \QKmod \to \HKmod$ is given by
%\[
%p_0(F)(f\colon P\to X) = \Ker (F(f\colon P\to X) \to F(1\colon P\to P))
%\]
%where $F\in \QKmod$, $f\colon P\to X$ is an object in $\mcH$, so an object in $\mcQ$,
%and the map is induced by the homomorphism $(1,f)$ from $1\colon P\to P$ to $f\colon P\to X$ in $\mcQ$.
\end{lem}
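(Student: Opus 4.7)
The plan is to argue entirely from the adjunction built into the recollement together with two applications of Yoneda's lemma, without using the tensor-product formula for left adjoints. Recall that by construction $i_0\colon \HKmod \to \QKmod$ is given by composition with the quotient functor $\pi\colon \mcQ \to \mcH = \mcQ/\mcE$, so that $(i_0 G)(Z) = G(\pi(Z))$ for every $G \in \HKmod$ and every $Z \in \mcQ$, and $q_0$ is characterised as its left adjoint by the recollement.

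Fix an object $Z = (P\to X) \in \mcQ$ and write $\bar{Z} = \pi(Z)$. For any $G \in \HKmod$ I would compute the functor $\Hom_{\HKmod}(q_0 \Hom_\mcQ(-, Z), G)$ in two ways. Using the $(q_0,i_0)$-adjunction and then Yoneda on $\mcQ$ one gets
\[
\Hom_{\HKmod}(q_0 \Hom_\mcQ(-, Z),\, G) \;\cong\; \Hom_{\QKmod}(\Hom_\mcQ(-, Z),\, i_0 G) \;\cong\; (i_0 G)(Z) \;=\; G(\bar{Z}).
\]
On the other hand, Yoneda in $\HKmod$ gives $G(\bar{Z}) \cong \Hom_{\HKmod}(\Hom_\mcH(-, \bar{Z}), G)$.

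Both isomorphisms are clearly natural in $G$, so a final application of Yoneda in $\HKmod$ forces a natural isomorphism $q_0 \Hom_\mcQ(-, Z) \cong \Hom_\mcH(-, \bar{Z})$, which is the assertion of the lemma under the usual abuse of identifying an object $(P\to X) \in \mcQ$ with its image in $\mcH$. Nothing in the argument is delicate; the only point worth checking is the description of $i_0$ as composition with the quotient functor $\pi$, after which the claim reduces to the standard fact that a left adjoint to a restriction functor between presheaf categories sends representables to representables. So the ``main obstacle'' is merely unpacking the definition of $i_0$ correctly.
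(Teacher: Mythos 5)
Your argument is exactly the paper's: the same chain of adjunction, Yoneda in $\QKmod$, the identity $i_0(G)(Z)=G(\bar Z)$, and Yoneda in $\HKmod$, with a final Yoneda application to conclude (which the paper leaves implicit as "giving the claim"). No differences beyond exposition.
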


\begin{proof}
For $G\in \HKmod$ we have
\[
\begin{split}
\Hom_{\HKmod}(q_0(\Hom_{\mcQ}(-,(P\to X))),G)
\cong
\Hom_{\QKmod}(\Hom_{\mcQ}(-,(P\to X)),i_0(G))
\\
\cong
i_0(G)(P\to X)
\cong
G(P\to X)
\cong
\Hom_{\HKmod}(\Hom_{\mcH}(-,(P\to X)),G)
\end{split}
\]
giving the claim.
\end{proof}

\begin{lem}\label{monoInH}
If $(\theta,\phi)$ is a morphism in $\mcQ$ and $\theta$ is a split monomorphism,
then $(\theta,\phi)$ induces a monomorphism in $\mcH$.
\end{lem}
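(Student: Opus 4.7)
My plan is to verify the monomorphism property of $(\theta,\phi)$ in $\mcH$ directly. I take an arbitrary morphism $(\alpha,\beta)\colon (g\colon Q\to Y)\to (f\colon P\to X)$ in $\mcQ$ whose post-composition $(\theta\alpha,\phi\beta)$ with $(\theta,\phi)$ factors through $\mcE$, and aim to show that $(\alpha,\beta)$ itself factors through $\mcE$. By the explicit description of such factorisations recalled just before the lemma, the hypothesis provides a morphism $h'\colon Y\to P'$ with $\theta\alpha=h'g$ and $\phi\beta=f'h'$.

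Next I fix a left inverse $\sigma\colon P'\to P$ of $\theta$ and propose $h:=\sigma h'\colon Y\to P$ as the null-homotopy of $(\alpha,\beta)$. The first required equation drops out at once from $hg=\sigma h' g=\sigma\theta\alpha=\alpha$. The second required equation $\beta=fh$ is the only delicate point. Using that $(\alpha,\beta)$ is a morphism in $\mcQ$, so $\beta g=f\alpha$, I compute
\[
(\beta-fh)g=\beta g-f(hg)=f\alpha-f\alpha=0.
\]
The essential feature of the category $\mcQ$ is that every object is a \emph{surjection} $g\colon Q\to Y$, so right-cancellation against $g$ is valid and I conclude $\beta-fh=0$. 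Hence $h$ witnesses that $(\alpha,\beta)$ factors through $\mcE$, as required.

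I do not anticipate any real obstacle: the only slightly subtle point is recognising that the surjectivity of the source arrow $g$, which is built into the definition of $\mcQ$, is exactly what allows the splitting $\sigma$ on the $P$-component to propagate to the $X$-component. In particular no hypothesis on $\phi$ beyond the commutativity already implicit in $(\theta,\phi)$ being a morphism of $\mcQ$ is needed.
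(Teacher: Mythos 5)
Your proof is correct and follows essentially the same approach as the paper: fix a retraction $\sigma$ of $\theta$, transport the null-homotopy via $\sigma$, and use surjectivity of the source arrow $g$ to identify the second components. The only difference is presentational — you invoke the paper's clean "there is a map $h$ with $\theta=hf$, $\phi=f'h$" characterization of factoring through $\mcE$, whereas the paper unpacks the factorization through an object of $\mcE$ explicitly and constructs the corresponding null-homotopy $r\alpha'\beta$ by hand, so your write-up is a bit shorter but not a different argument.
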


\begin{proof}
Say $(\theta,\phi)$ is a morphism from $f\colon P\to X$ to $f'\colon P'\to X'$, 
and let $r\colon P'\to P$ be a retraction for $\theta$, so $r \theta = 1_P$.
Take a morphism in $\mcH$ from an object $f''\colon P''\to X''$ to $f\colon P\to X$, and let it be represented by
a morphism $(\theta',\phi')$ in $\mcQ$. If the composition with $(\theta,\phi)$ is zero in $\mcH$,
then in $\mcQ$ the composition $(\theta \theta',\phi \phi')$ factors through an object in $\mcE$,
say as $(\alpha,\beta)$ from $f''\colon P''\to X''$ to $1\colon Q\to Q$ composed with $(\alpha',\beta')$ from $1\colon Q\to Q$
to $f'\colon P'\to X'$. 
Then $(\theta',r \alpha' \beta)$ defines a map from $f\colon P''\to X''$ to $1\colon P\to P$, and its
composition with the map $(1,f)$ from $1\colon P\to P$ to $f\colon P\to X$ is equal to $(\theta',f r \alpha' \beta)$.
But $(\theta',\phi')$ is also a morphism from $f''\colon P''\to X''$ to $f\colon P\to X$, and since these two maps have
the same first component, and $f''$ is surjective, they must be equal, $(\theta',f r \alpha' \beta) = (\theta',\phi')$.
This shows that $(\theta',\phi')$ is the zero map in $\mcH$, as required.
\end{proof}

\begin{lem}
\label{l:pdlemma}
If $G$ is a functor in $\HKmod$ and $i_0(G)$ is finitely presented in $\QKmod$, 
then $\pdim_{\QKmod} i_0(G) \le 2$.
Moreover $G$ is finitely presented in $\HKmod$ and $\pdim_{\HKmod} G \le 2$. 
\end{lem}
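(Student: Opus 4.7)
The plan is to construct an explicit length-two projective resolution of $i_0(G)$ in $\QKmod$ and then descend it to $\HKmod$. Start with any projective presentation
\[
\Hom_{\mcQ}(-, Q_1) \xrightarrow{\alpha_*} \Hom_{\mcQ}(-, Q_0) \to i_0(G) \to 0
\]
coming from a morphism $\alpha = (\theta, \psi)\colon (f_1\colon P_1 \to X_1) \to (f_0\colon P_0 \to X_0)$ in $\mcQ$. The critical first reduction is that any summand of $Q_0$ in $\mcE$ can be split off: indeed $(1\colon P \to P)$ becomes the zero object in $\mcH$, so $i_0(G)(1\colon P\to P) = 0$, and by Yoneda every natural transformation from $\Hom_{\mcQ}(-, (1\colon P\to P))$ to $i_0(G)$ vanishes. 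Once $Q_0$ has no $\mcE$-summand, evaluating $\alpha_*$ at $(1\colon P_0 \to P_0)$ (where the cokernel $i_0(G)$ is zero) forces $\theta$ to be a split epimorphism, and hence $\ker\theta$ is a projective direct summand of $P_1$.

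Now set $Y = f_1(\ker\theta) \subseteq \ker\psi$ and $Q_2 = (f_2\colon \ker\theta \to Y)$, which lies in $\mcQ$ since $\ker\theta$ is projective and $f_2$ is surjective. Let $\gamma\colon Q_2 \to Q_1$ be given by the canonical inclusions $\ker\theta \hookrightarrow P_1$ and $Y \hookrightarrow X_1$. Then $\alpha\gamma = 0$, and $\gamma_*$ on representables is injective because both components of $\gamma$ are injective. For exactness in the middle I check that the image of $\gamma_*$ equals $\ker\alpha_*$: any $(\theta',\psi')\colon Q'\to Q_1$ with $\alpha\circ(\theta',\psi') = 0$ has $\theta'$ landing in $\ker\theta$ and $\psi'$ landing in $\ker\psi$, and the relation $\psi' f' = f_1 \theta'$ together with the surjectivity of $f'$ forces $\psi'$ to actually land in $Y$. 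This produces a length-two projective resolution in $\QKmod$ and yields $\pdim_{\QKmod} i_0(G) \le 2$.

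For the $\HKmod$ statement, I apply $q_0$ to the original presentation; using $q_0 i_0 \cong \id$ together with Lemma~\ref{qzero-pzero} and right-exactness of $q_0$ gives $\Hom_{\mcH}(-, Q_1) \to \Hom_{\mcH}(-, Q_0) \to G \to 0$, proving $G$ is finitely presented. To bound $\pdim_{\HKmod} G \le 2$, I would verify directly that
\[
0 \to \Hom_{\mcH}(-, Q_2) \to \Hom_{\mcH}(-, Q_1) \to \Hom_{\mcH}(-, Q_0) \to G \to 0
\]
is exact, which reduces to a homotopy-matching statement: a morphism $(\eta,\omega)\colon Q' \to Q_2$ factors through $\mcE$ if and only if $\gamma\circ(\eta,\omega)\colon Q'\to Q_1$ does, because any chain-homotopy witness $h\colon X'\to P_1$ for the composite must land in $\ker\theta$ (its precomposition with $f'$ equals the $\ker\theta$-valued map $\eta$, and $f'$ is surjective), hence furnishes the required homotopy witness $X'\to \ker\theta$ for $(\eta,\omega)$ itself.

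The main obstacle is the reduction in step one: recognizing that the vanishing of $i_0(G)$ on $\mcE$ forces $\theta$ to be a split epimorphism is what makes $\ker\theta$ projective so that $Q_2$ lives in $\mcQ$, and once this is in place both resolutions and the homotopy-matching amount to a clean diagram chase.
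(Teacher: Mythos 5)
Your approach is essentially the paper's: you correctly identify that $i_0(G)$ vanishing on $\mcE$ forces the top component of the presenting morphism to be a split epimorphism, build the same kernel object $Q_2 = (\ker\theta \to f_1(\ker\theta))$, verify exactness of the four-term sequence in $\QKmod$, and then apply $q_0$ to descend to $\HKmod$. (Your preliminary step of stripping $\mcE$-summands off $Q_0$ is harmless but unnecessary; the split-epi conclusion follows directly without it.) The $\QKmod$ part is complete and correct.

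However, there is a genuine gap in the descent to $\HKmod$. Applying $q_0$ gives a complex that is exact at the two rightmost terms automatically. Full exactness requires two separate checks: (i) the leftmost map $\Hom_{\mcH}(-,Q_2) \to \Hom_{\mcH}(-,Q_1)$ is injective, and (ii) the sequence is exact at the term $\Hom_{\mcH}(-,Q_1)$, i.e.\ any morphism $Q'\to Q_1$ whose composite with $\alpha$ factors through $\mcE$ is, up to a morphism factoring through $\mcE$, in the image of $\gamma$. Your ``homotopy-matching statement'' --- that $(\eta,\omega)$ factors through $\mcE$ if and only if $\gamma\circ(\eta,\omega)$ does --- is precisely assertion (i), and your argument for it (using surjectivity of $f'$ to transport the homotopy witness into $\ker\theta$) is correct; it is the content of the paper's Lemma~\ref{monoInH} specialized to this case. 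But you do not address (ii) at all, and it does not follow from (i). The argument for (ii) is of the same homotopy-chasing flavour but uses different data: one picks a section $s$ of $\theta$ and replaces $(a,b)\colon Q'\to Q_1$ by $(a - shf',\, b - f_1 sh)$, showing this lands in $Q_2$ and that the difference $(shf', f_1 sh)$ factors through $\mcE$ via $sh$. The paper spends a full paragraph on this; your proposal asserts exactness but supplies only half the verification.
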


\begin{proof}
By assumption there is a projective presentation
\[
\Hom_{\mcQ}(-,(f'\colon P'\to X')) \to
\Hom_{\mcQ}(-,(f''\colon P''\to X'')) \to
i_0(G) \to 0
\]
for some morphism $(\theta',\phi')$ from the object $f'\colon P'\to X'$ to the object $f''\colon P''\to X''$ in $\mcQ$.
Now the functor $i_0(G)$ vanishes on every object of $\mcE$, so in particular on $1\colon P''\to P''$,
so the map
\[
\Hom_{\mcQ}((1\colon P''\to P''),(f'\colon P'\to X')) \to
\Hom_{\mcQ}((1\colon P''\to P''),(f''\colon P''\to X'')) 
\]
is onto. Thus $(1,f'')$ lifts to a map from $1\colon P''\to P''$ to $f'\colon P'\to X'$.
In particular $\theta'$ is a split epimorphism. Thus $\phi'$ is also an epimorphism, and we 
obtain the diagram 
\[
\begin{CD}
0 @>>> P @>\theta >> P' @>\theta'>> P'' @>>> 0 \\
& & @Vf VV @Vf' VV @Vf'' VV \\
0 @>>> X @>\phi >> X' @>\phi' >> X'' @>>> 0 \\
\end{CD}
\]
where $P = \Ker \theta'$ and $X = \Ker \phi'$.
Now the map $f\colon P\to X$ need not be onto, but factorizing it as $f_0\colon P\to \Bild(f)$
followed by the inclusion, we get an
exact sequence
\[
\begin{split}
0 \to
&\Hom_{\mcQ}(-,(f_0\colon P\to \Bild(f))) \to
\Hom_{\mcQ}(-,(f'\colon P'\to X')) 
\\
\to
&\Hom_{\mcQ}(-,(f''\colon P''\to X'')) \to
i_0(G) \to 0.
\end{split}
\]
Applying $q_0$ one obtains a sequence of functors
\[
\begin{split}
0 \to
&\Hom_{\mcH}(-,(f_0\colon P\to \Bild(f))) \to
\Hom_{\mcH}(-,(f'\colon P'\to X')) 
\\
\to
&\Hom_{\mcH}(-,(f''\colon P''\to X'')) \to
G \to 0
\end{split}
\]
in $\HKmod$. 
Since $q_0$ is a left adjoint, this sequence is
right exact.
To show it is exact at 
the term $\Hom_{\mcH}(-,(f'\colon P'\to X'))$,
we consider a morphism $(\alpha,\beta)$
from $g\colon Q\to M$ to $f'\colon P'\to X'$.
which is sent to zero in
$\Hom_{\mcH}((g\colon Q\to M),(f''\colon P''\to X''))$.
Thus there is a map $h\colon M\to P''$
with $h g = \theta' \alpha$
and $f'' h = \phi' \beta$.
Let $s$ be a section for $\theta'$ and $r$ a retraction for $\theta$ with $1_{P'} = \theta r + s \theta'$.
Then $\alpha-shg = \theta\alpha'$ for some $\alpha'\colon Q\to P$
and $\beta - f'sh = \phi\beta'$ for some $\beta'\colon M\to X$,
which actually has image contained in $\Bild(f)$
since 
\[
\phi \beta' g = \beta g - f'shg = f'\alpha - f's\theta'\alpha = f'\theta r g = \phi f r g
\]
and since $\phi $ is a monomorphism and $g$ is an epimorphism, we have $\beta' = fr$.  
Then, up to a morphism factoring through a projective, $(\alpha,\beta)$ comes from the map
$(\alpha',\beta')$ from $g\colon Q\to M$ to $f_0\colon P\to \Bild(f)$.

Now, because the map from $P$ to $P'$ is a split monomorphism, the map from $f_0\colon P\to \Bild(f)$
to $f'\colon P'\to X'$ induces a monomorphism in $\mcH$, and it follows that the sequence
is exact on the left, too.
\end{proof}

\section{The main recollement}
\label{s:mainrecoll}

Observe that there are no non-zero maps from an object $0\colon P\to 0$ in $\mathcal{P}$ to 
an object $1\colon P'\to P'$ in $\mathcal{E}$,
so the natural functor from $\mcP$ to $\mcH = \mcQ/\mcE$ is fully faithful,
and hence $\mcP$ can be considered as a full subcategory of $\mcH$.
Again, identifying $\PKmod\cong \text{$A$-$\Modu$}$, we have a recollement
\[ 
\xymatrix{
\HPKmod \ar[rr]|{i} \;  && \; \HKmod \; \ar@<2ex>[ll]^{p} \ar@<-2ex>[ll]_{q} \ar[rr]|>>>>>>>>>{e}  && \; 
\text{$A$-$\Modu$} \ (\cong \PKmod) 
\ar@<2ex>[ll]^{r{\!\!\quad\quad \quad } } \ar@<-2ex>[ll]_{\ell{\!\!\quad\quad \quad } }.
}
\]
Clearly $e = e'  i_0$.
We denote by $c\colon \text{$A$-$\Modu$} \to \HKmod$ the corresponding intermediate extension functor.
%We can describe these functors explicitly. 

\subsection*{Projective dimension}
\begin{lem}
For $A$-modules $M$, we have a natural isomorphism
$r'(M) \cong i_0 r(M)$ 
and a natural epimorphism $\ell'(M) \to i_0 \ell(M)$.
\end{lem}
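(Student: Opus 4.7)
The plan is to exploit the factorisation $e = e' \circ i_0$ recorded just before the lemma, combined with the adjunctions coming from the two auxiliary recollements ($\ell' \dashv e' \dashv r'$ and $q_0 \dashv i_0 \dashv p_0$). Taking left and right adjoints of both sides of $e = e' i_0$ and using uniqueness of adjoints, I obtain natural isomorphisms
\[
\ell \;\cong\; q_0 \circ \ell', \qquad r \;\cong\; p_0 \circ r'
\]
of functors $\text{$A$-}\Modu \to \HKmod$. Under these identifications the comparison maps in the lemma become, respectively, the counit $i_0 p_0 r'(M) \to r'(M)$ of $i_0 \dashv p_0$ and the unit $\ell'(M) \to i_0 q_0 \ell'(M)$ of $q_0 \dashv i_0$.

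For the isomorphism, I will use the explicit formula for $r'$ to compute
\[
r'(M)(1_P\colon P \to P) \;=\; \Hom_A(\Ker 1_P, M) \;=\; \Hom_A(0, M) \;=\; 0,
\]
so $r'(M)$ vanishes on every object of $\mcE$, i.e.\ $e_0(r'(M)) = 0$. Recollement axiom~(4) then places $r'(M)$ in the essential image of $i_0$; equivalently, the counit $i_0 p_0 r'(M) \to r'(M)$ is an isomorphism. Combined with $r \cong p_0 r'$ this yields the natural isomorphism $r'(M) \cong i_0 r(M)$.

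For the epimorphism, I will start from any projective presentation
\[
\bigoplus_\alpha \Hom_\mcQ(-, X_1^\alpha) \;\longrightarrow\; \bigoplus_\beta \Hom_\mcQ(-, X_0^\beta) \;\longrightarrow\; \ell'(M) \;\longrightarrow\; 0
\]
in $\QKmod$. The functor $i_0 q_0$ is right exact (as $i_0$ is exact and $q_0$ is a left adjoint), and Lemma~\ref{qzero-pzero} identifies $i_0 q_0 \Hom_\mcQ(-, X) = \Hom_\mcH(-, X)$, so applying it gives a right exact sequence of the same shape with $\Hom_\mcH$ in place of $\Hom_\mcQ$. On each representable summand the unit $\eta$ is, by Yoneda, the natural quotient map $\Hom_\mcQ(-, X) \twoheadrightarrow \Hom_\mcH(-, X)$, which is objectwise surjective by the very definition of $\mcH = \mcQ/\mcE$. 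A four-lemma argument applied objectwise to the vertical map of these two presentations then shows that $\eta_{\ell'(M)}\colon \ell'(M) \to i_0 q_0 \ell'(M) = i_0 \ell(M)$ is objectwise surjective, hence an epimorphism in $\QKmod$.

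The only step that requires some care is the identification of the unit on representables with the Hom-quotient map, which is a direct Yoneda computation using Lemma~\ref{qzero-pzero}; everything else is formal recollement yoga and the explicit formulas for $\ell'$ and $r'$ already at hand.
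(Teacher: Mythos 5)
Your proposal is correct and, for the isomorphism, takes essentially the same route as the paper: both observe from the explicit formula $r'(M)(f\colon P\to X)=\Hom_A(\Ker f,M)$ that $r'(M)$ kills $\mcE$, hence lies in the essential image of $i_0$; you additionally package this via $r\cong p_0 r'$ (adjoints of the composite $e=e'i_0$) and the counit-isomorphism criterion, which is a clean reformulation of what the paper does by hand via a Hom computation.

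For the epimorphism, however, your route is genuinely different. The paper defines the map $\ell'(M)\to i_0\ell(M)$ via an adjunction identity, then invokes \cite[Proposition 4.2]{FP} to get that $i_0 p_0(G)\to G$ is a monomorphism for all $G\in\QKmod$; pushing this through the exact functor $e'$ and the adjunctions yields that precomposition $\Hom_{\QKmod}(i_0\ell(M),G)\to\Hom_{\QKmod}(\ell'(M),G)$ is injective for all $G$, i.e.\ the map is epi by Yoneda. You instead identify the comparison map with the unit $\eta_{\ell'(M)}$ of $q_0\dashv i_0$ and verify objectwise surjectivity directly: the unit on a representable is, via Lemma~\ref{qzero-pzero} and Yoneda, the quotient $\Hom_{\mcQ}(-,X)\twoheadrightarrow\Hom_{\mcH}(-,X)$, and since $i_0q_0$ is right exact and preserves direct sums, a projective presentation of $\ell'(M)$ transports this surjectivity to $\ell'(M)\to i_0\ell(M)$. (Your invocation of the ``four-lemma'' is more than needed; the relevant fact is just that if $P_0\twoheadrightarrow\ell'(M)$ and the composite $P_0\to i_0q_0P_0\twoheadrightarrow i_0\ell(M)$ is surjective, then so is $\ell'(M)\to i_0\ell(M)$.) Your argument is more elementary and self-contained, avoiding the external reference; the paper's argument is shorter once one grants the cited fact and fits more naturally with the co-Yoneda style used elsewhere in that section.
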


\begin{proof}
For $M$ an $A$-module, the functor $r'(M)$ vanishes on objects in $\mcE$, so it is equal to $i_0 r(M)$
for some functor $r\colon \text{$A$-$\Modu$}\to \HKmod$. Now for $G\in \HKmod$ we have
\[
\begin{split}
\Hom_{\HKmod}(G,r(M)) 
&= \Hom_{\QKmod}(i_0(G),i_0 r(M)) =
\Hom_{\QKmod}(i_0(G),r'(M)) 
\\
&= \Hom_A (e' i_0(G),M) = \Hom_A(e(G),M), 
\end{split}
\]
so this functor is the adjoint.

Now for $M$ in $A$-$\Modu$ and $F\in\HKmod$, using that $e = e' i_0$ we have
\[
\Hom_{\QKmod}(\ell'(M),i_0(F)) 
%= \Hom_A(M,e'(i_0(F))) 
= \Hom_A(M,e(F))
%\]
%\[
= \Hom_{\HKmod}(\ell(M),F) = \Hom_{\QKmod}(i_0\ell(M),i_0(F)).
\]
The identity map on the right hand side for $F=\ell(M)$ gives a natural morphism
$\ell'(M)\to i_0 \ell(M)$. 

Now for any $G\in\QKmod$, 
%the formula in Lemma~\ref{qzero-pzero} for $p_0$ 
by \cite[Proposition 4.2]{FP} the natural map $i_0 p_0(G)\to G$
is a monomorphism. Since $e'$ is exact and $e=e' i_0$, we deduce that for any $A$-module $M$, the map 
\[
\Hom_A(M,e p_0(G))\to \Hom_A(M,e'(G))
\]
is a monomorphism.
By adjointness we can rewrite this as 
\[
\Hom_{\QKmod}(i_0 \ell(M),G) \hookrightarrow \Hom_{\QKmod}(\ell'(M),G)
\]
giving the result.
\end{proof}

\begin{lem}
We have $c' = i_0 c$, so that $c$ is given by the same formula as $c'$, that is,
\[
c(M)(f\colon P\to X) = \coKer \left(\Hom_A(X,M) \to \Hom_A(P,M) \right).
\]
\end{lem}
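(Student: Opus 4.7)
The plan is to combine the natural transformations furnished by the preceding lemma with the exactness of $i_0$ to exhibit a natural isomorphism $c'(M) \cong i_0 c(M)$, from which the explicit formula drops out. Recall that $c(M) = \Bild(\gamma_M)$ and $c'(M) = \Bild(\gamma'_M)$, where $\gamma_M \colon \ell(M) \to r(M)$ and $\gamma'_M \colon \ell'(M) \to r'(M)$ are the canonical maps corresponding under adjunction to the identity $\id_M$.

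Composing the natural epimorphism $\alpha_M \colon \ell'(M) \twoheadrightarrow i_0 \ell(M)$ and the natural isomorphism $\beta_M \colon i_0 r(M) \xrightarrow{\sim} r'(M)$ from the preceding lemma with $i_0(\gamma_M) \colon i_0 \ell(M) \to i_0 r(M)$, one obtains a natural morphism $\delta_M \colon \ell'(M) \to r'(M)$. The first step is to verify $\delta_M = \gamma'_M$: both lie in $\Hom_{\QKmod}(\ell'(M), r'(M)) \cong \Hom_A(M, e'r'(M))$, and using $e' \circ i_0 = e$ together with the fact that $\alpha_M$, $\beta_M$, and $\gamma_M$ are each constructed via adjunction from $\id_M$, a routine diagram chase shows that both $\delta_M$ and $\gamma'_M$ correspond to $\id_M$. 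Since $i_0$ is exact (being the middle embedding of a recollement, it has both adjoints), it preserves images, so $i_0 c(M) = \Bild(i_0(\gamma_M))$. Because $\alpha_M$ is an epimorphism, $c'(M) = \Bild(\gamma'_M) = \Bild(\delta_M)$ coincides with $\beta_M(\Bild(i_0(\gamma_M))) = \beta_M(i_0 c(M))$, yielding the desired natural isomorphism $c'(M) \cong i_0 c(M)$.

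Finally, since the objects of $\mcH = \mcQ/\mcE$ coincide with those of $\mcQ$, evaluating $c(M)$ on $(f \colon P \to X)$ agrees with evaluating $i_0 c(M) \cong c'(M)$ on $(f \colon P \to X)$, which is $\coKer(\Hom_A(X,M) \to \Hom_A(P,M))$ by the theorem concluding Section~\ref{s:qhdef}. The main technical point is the adjunction diagram chase identifying $\delta_M$ with $\gamma'_M$; the remainder is formal.
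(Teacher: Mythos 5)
Your proof is correct and follows essentially the same route as the paper: both use the natural epimorphism $\ell'(M)\twoheadrightarrow i_0\ell(M)$ and the natural isomorphism $i_0 r(M)\cong r'(M)$ from the preceding lemma, together with the exactness of $i_0$, to show that $\gamma'_M$ factors as $\ell'(M)\twoheadrightarrow i_0\ell(M)\twoheadrightarrow i_0 c(M)\hookrightarrow i_0 r(M)=r'(M)$ and hence has image $i_0 c(M)$. Your write-up just makes explicit the adjunction bookkeeping that the paper leaves implicit when asserting the factorization.
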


\begin{proof}
For an $A$-module $M$, recall that $c'(M)$ is the image of the map from $\ell'(M)$ to $r'(M)$. 
Now, using that $i_0$ is exact, this factors as
\[
\ell'(M) \twoheadrightarrow i_0 \ell(M) \twoheadrightarrow i_0 c(M)
\hookrightarrow i_0 r(M) = r'(M).
\]
\end{proof}

\begin{lem}
\label{l:cpd}
Let $M$ be a finite-dimensional left $A$-module and let $p\colon Q\to M$ be a projective cover.
Then $\cp (M)$ has a projective resolution
\[
0\to \Hom_{\mcH}(- , (p\colon Q\to M))\to \Hom_{\mcH}(- , (0\colon Q\to 0) ) \to \cp (M)\to 0
\]
in $\HKmod$. In particular $\cp(M)$ is finitely presented and $\pdim_{\HKmod} (\cp (M)) \leq 1$.
\end{lem}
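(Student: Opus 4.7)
The plan is to verify directly that the proposed two-term sequence is a projective resolution of $c(M)$ in $\HKmod$ by evaluating it on an arbitrary object $(f\colon P\to X)\in\mcH$. Both left-hand terms are representable, hence projective in $\HKmod$, so exactness will immediately yield finite presentation of $c(M)$ together with $\pdim_{\HKmod}c(M)\le 1$.

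First I would set up the two maps. The pair $(1_Q,0)\colon (p\colon Q\to M)\to (0\colon Q\to 0)$ is a morphism in $\mcQ$ (the commutativity condition is vacuous since the codomain map is zero), and by Yoneda its class in $\mcH$ induces the first natural transformation, which evaluates on $(f\colon P\to X)$ to $[(\alpha,\beta)]\mapsto[\alpha]$. A direct computation using the characterization of maps factoring through $\mcE$ from Section~\ref{s:qhdef} shows that $\Hom_{\mcH}((f\colon P\to X),(0\colon Q\to 0))$ equals $\Hom_A(P,Q)$ modulo the subspace of maps of the form $hf$ with $h\in\Hom_A(X,Q)$. Post-composition with $p$ then yields a well-defined second map $[\alpha]\mapsto[p\alpha]$ into $c(M)(f\colon P\to X)=\coKer(\Hom_A(X,M)\to\Hom_A(P,M))$, because if $\alpha=hf$ then $p\alpha=(ph)f$ lies in the image of $\Hom_A(X,M)$. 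Naturality in $(f\colon P\to X)$ is routine.

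Next I would check exactness at each spot. Surjectivity onto $c(M)(f\colon P\to X)$ follows because any $\gamma\in\Hom_A(P,M)$ lifts to some $\tilde\gamma\in\Hom_A(P,Q)$ with $p\tilde\gamma=\gamma$ (using that $P$ is projective and $p$ is surjective), and $\tilde\gamma$ maps to $[\gamma]$. For exactness in the middle, the composition sends $[(\alpha,\beta)]$ to $[p\alpha]=[\beta f]=0$; conversely, if $[p\alpha]=0$ then $p\alpha=\beta f$ for some $\beta\colon X\to M$, and the resulting $(\alpha,\beta)$ is a morphism in $\mcQ$ whose class maps to $[\alpha]$. The main obstacle, and the step where the definition of $\mcQ$ is genuinely used, is injectivity of the first map: given $[(\alpha,\beta)]$ with $p\alpha=\beta f$ and $\alpha=hf$ for some $h\colon X\to Q$, one obtains $\beta f=p\alpha=phf$. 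At this point I would invoke the fact that every structure morphism in $\mcQ$ is an epimorphism to cancel $f$ and conclude $\beta=ph$. Then $(\alpha,\beta)=(hf,ph)$ factors as $(1_Q,p)\circ(hf,h)$ through $(1\colon Q\to Q)\in\mcE$, and so vanishes in $\mcH$.

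Having verified exactness, the displayed sequence is a projective resolution of $c(M)$ of length one in $\HKmod$, proving both that $c(M)$ is finitely presented and that $\pdim_{\HKmod}c(M)\le 1$, as claimed.
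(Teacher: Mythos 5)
Your proof is correct, and it takes a genuinely different route from the paper's. The paper first builds the analogous two-term resolution of $c'(M)$ in $\QKmod$ (where all objects $f\colon P\to X$ are available and the diagram chase is done with Hom sets of $A$-modules), and then transports it to $\HKmod$ by applying the left adjoint $q_0$, invoking Lemma~\ref{qzero-pzero} (that $q_0$ preserves representables) for the identification of the terms and Lemma~\ref{monoInH} for left-exactness. You instead work entirely inside $\HKmod$, using the explicit description of morphisms in $\mcH = \mcQ/\mcE$ to evaluate the sequence objectwise and check exactness by hand. Both proofs rely on the formula $c(M)(f\colon P\to X) = \coKer(\Hom_A(X,M)\to\Hom_A(P,M))$ established just before, and both ultimately use the fact that the structure map $f$ of an object of $\mcQ$ is surjective (the paper inside Lemma~\ref{monoInH}, you directly in the cancellation step $\beta f = phf \Rightarrow \beta = ph$). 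Your approach buys a more elementary and self-contained argument that sidesteps the auxiliary recollement machinery; the paper's approach amortizes the effort, since Lemmas~\ref{qzero-pzero} and \ref{monoInH} are reused elsewhere (e.g.\ in the proof of Lemma~\ref{l:pdlemma}).
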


\begin{proof}
Let $f\colon P\to X$ be an object in $\mcQ$.
By diagram chasing one obtains an exact
sequence
\[
0\to
\Ker\left(\Hom_A(P,Q)\oplus \Hom_A(X,M)\to \Hom_A(P,M)\right) \to \Hom_A(P,Q) \to c'(M)(P\to X) \to 0.
\]
This gives an exact sequence
\[
0\to
\Hom_{\mcQ}(-,(Q\to M))
\to
\Hom_{\mcQ}(-,(Q\to 0))
\to
c'(M)\to 0.
\]
Applying $q_0$ and using that $q_0$ preserves representables by Lemma~\ref{qzero-pzero}
\[
0\to
\Hom_{\mcH}(-,(Q\to M))
\to
\Hom_{\mcH}(-,(Q\to 0))
\to
c(M)\to 0.
\]
It is right exact since $q_0$ is a left adjoint, and it is exact on the left by Lemma~\ref{monoInH} about monomorphisms in $\mcH$.
\end{proof}

\subsection*{Rigidity}
In general, we do not know when intermediate extension functors map all modules to rigid modules but in our situation, we can prove this statement, generalizing \cite[Theorem 5.6]{CIFR}.

\begin{thm} 
\label{rigid}
We have $\EXT^1_{\HKmod}(\cp (M), \cp (N) )=0$ for finite-dimensional left $A$-modules $M$, $N$.
\end{thm}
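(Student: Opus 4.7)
The plan is to apply $\Hom_{\HKmod}(-, c(N))$ to the two-term projective resolution of $c(M)$ supplied by Lemma~\ref{l:cpd}, and to recognize that the resulting connecting map is manifestly surjective. This reduces the Ext vanishing to a computation that is immediate from the explicit formula for $c(N)$ established just before Theorem~\ref{rigid}.

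More precisely, let $p\colon Q\to M$ be a projective cover. Lemma~\ref{l:cpd} gives an exact sequence
\[
0\to \Hom_{\mcH}(-, (p\colon Q\to M))\to \Hom_{\mcH}(-, (0\colon Q\to 0))\to c(M)\to 0
\]
in $\HKmod$, with the first map induced by the morphism $(1_Q,0)\colon (p\colon Q\to M)\to (0\colon Q\to 0)$ in $\mcH$. Applying $\Hom_{\HKmod}(-, c(N))$ and using the Yoneda lemma to identify $\Hom_{\HKmod}(\Hom_{\mcH}(-,X), c(N))\cong c(N)(X)$, I obtain an exact sequence
\[
c(N)(0\colon Q\to 0) \xrightarrow{\;\alpha\;} c(N)(p\colon Q\to M)\to \Ext^1_{\HKmod}(c(M), c(N))\to 0,
\]
where $\alpha$ is the value of the contravariant functor $c(N)$ on $(1_Q,0)$. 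The goal is thus to show that $\alpha$ is surjective.

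Now the formula $c(N)(f\colon P\to X)=\coKer(\Hom_A(X,N)\to \Hom_A(P,N))$ gives
\[
c(N)(0\colon Q\to 0)=\Hom_A(Q,N),\qquad c(N)(p\colon Q\to M)=\coKer\bigl(\Hom_A(M,N)\xrightarrow{-\circ p}\Hom_A(Q,N)\bigr),
\]
and the naturality of this construction in $(\theta,\phi)$, specialised to $(\theta,\phi)=(1_Q,0)$, shows that $\alpha$ is simply the canonical quotient map $\Hom_A(Q,N)\twoheadrightarrow c(N)(p\colon Q\to M)$. This is surjective, so $\Ext^1_{\HKmod}(c(M), c(N))=0$, as required.

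There is essentially no obstacle: all the work has been done in Lemma~\ref{l:cpd} (which guarantees the resolution has the special shape $(p\colon Q\to M)\to (0\colon Q\to 0)$, so that its image in $c(N)$ is computed by a cokernel) and in the formula for $c(N)$. The only point that needs a brief sanity check is the identification of $\alpha$ with the canonical quotient map, which follows directly from functoriality because the first component of $(1_Q,0)$ is the identity.
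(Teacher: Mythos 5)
Your proposal is correct and takes essentially the same approach as the paper: both apply $\Hom_{\HKmod}(-,c(N))$ to the two-term projective resolution from Lemma~\ref{l:cpd}, use Yoneda to identify the middle terms with values of $c(N)$, and observe that the connecting map is the tautological quotient onto $\coKer(\Hom_A(M,N)\to\Hom_A(Q,N))$. The only cosmetic difference is that the paper also invokes full faithfulness of $c$ to identify the leftmost term of the four-term exact sequence with $\Hom_A(M,N)$, whereas you skip that identification since it is not needed to conclude surjectivity at the $\Ext^1$ end.
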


\begin{proof} 
%(compare \cite{CIFR}, proof of thm. 5.6) 
Applying $\Hom_{\HKmod} (-, \cp (N))$ to the projective resolution for $\cp(M)$
gives an exact sequence 
\[
\begin{aligned}
0 & \to \Hom_{\HKmod} (\cp (M), \cp (N))\to \Hom_{\mcH}( (-, Q\to 0), \cp (N)) \\
& \to \Hom_{\mcH} ( (-, Q\to M), \cp (N)) \to \EXT^1_{\HKmod}(\cp (M), \cp (N)) \to 0.
\end{aligned}
\]
Using the fully faithfulness of $\cp $ we can write it as 
\[ 
0\to \Hom_A(M,N) \to \cp(N)(Q\to 0)\to \cp(N)(Q\to M)\to \EXT^1_{\HKmod}(\cp(M), \cp(N))\to 0
\]
and by the definition of $\cp(N)$ the middle map is
\[ 
\Hom_A (Q,N) \to 
\coKer (\Hom_A (M,N) \to \Hom_A (Q,N)) 
\]
which is tautologically surjective, giving the result.
\end{proof}

We observe the following special property that $\cp$ maps injectives to injectives and projectives to projectives. More precisely, one has

\begin{lem} \label{cproj-cinj}
Let $M$ be an $A$-module.
\begin{itemize}
\item[(1)] 
If $M$ is injective, then
$\cp (M)=r(M)$ is injective in $\HKmod$.
\item[(2)] 
If $M$ is projective, then
$\cp (M)=\ell (M) $ is projective in $\HKmod$. 
\end{itemize}
\end{lem}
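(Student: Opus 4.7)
For part (1), I will compare $c(M)$ and $r(M)$ pointwise using the explicit formulas already at hand: $c(M)(f\colon P\to X) = \coKer(\Hom_A(X,M)\to \Hom_A(P,M))$ from the previous lemma, and $r(M)(f\colon P\to X) = \Hom_A(\Ker f, M)$, which is the formula for $r'$ proved in Section~\ref{s:qhdef} transported via the relation $r' = i_0 r$. The natural monomorphism $c(M)\hookrightarrow r(M)$ becomes, at each object, the map induced by applying $\Hom_A(-, M)$ to the short exact sequence $0\to \Ker f\to P\to X\to 0$. Since $M$ is injective, this long exact sequence terminates after the third term, so the map from $\coKer(\Hom_A(X,M)\to\Hom_A(P,M))$ to $\Hom_A(\Ker f, M)$ is an isomorphism, giving $c(M) = r(M)$. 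For the injectivity of $r(M)$ in $\HKmod$, I will invoke the standard fact that a right adjoint of an exact functor preserves injectives: $e$ is exact as part of the recollement, so $\Hom_{\HKmod}(-, r(M)) \cong \Hom_A(e(-), M)$ is exact when $M$ is injective.

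For part (2), I will exploit that the projective resolution of $c(M)$ from Lemma~\ref{l:cpd} degenerates when $M$ itself is projective. Choosing the projective cover to be $p = 1_M\colon M\to M$, the object $(1\colon M\to M)$ lies in $\mcE$, so it is a zero object in $\mcH = \mcQ/\mcE$, and hence the first term $\Hom_{\mcH}(-,(1\colon M\to M))$ of the resolution vanishes. This yields $c(M) \cong \Hom_{\mcH}(-, (0\colon M\to 0))$, a representable functor, hence projective in $\HKmod$. To identify this with $\ell(M)$, I will note that $M$, viewed as an object of $\PKmod$ under $\PKmod\cong \text{$A$-$\Modu$}$, is already the representable functor $\Hom_{\mcP}(-,(0\colon M\to 0))$ because $M$ is projective; and the left adjoint $\ell$ of the restriction functor $\HKmod\to \PKmod$ sends representables to representables, which one verifies directly from the adjunction (Yoneda gives $\Hom_{\HKmod}(\Hom_{\mcH}(-,(0\colon M\to 0)),G) \cong G(0\colon M\to 0) = e(G)(0\colon M\to 0) \cong \Hom_{\PKmod}(\Hom_{\mcP}(-,(0\colon M\to 0)),e(G))$).

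I do not anticipate any serious obstacle; the two parts are driven by complementary vanishings. Part (1) uses $\Ext^1_A(X, M) = 0$ for injective $M$ to make the relevant Hom-sequence right exact and collapse $c(M)\hookrightarrow r(M)$. Part (2) uses that the object $(1\colon M\to M)$ is genuinely zero in $\mcH$ to make the first syzygy in the resolution of $c(M)$ vanish. Both observations are essentially immediate from the definitions, and the only care needed is in correctly naming the representable functor one arrives at and matching it with $\ell(M)$ via the adjunction.
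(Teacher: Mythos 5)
Your proposal is correct and follows essentially the same route as the paper's proof: part (1) via the pointwise formulas together with the vanishing of $\Ext^1_A(X,M)$ and the right-adjoint-of-exact-functor observation, part (2) via the projective resolution of Lemma~\ref{l:cpd} with $Q=M$ and $p=1_M$ so the first term lies in $\mcE$ and vanishes in $\mcH$. Your explicit Yoneda verification that the resulting representable functor is $\ell(M)$ is a useful detail the paper leaves implicit; the only point to keep in mind is that, as in the paper, the argument is phrased for finite-dimensional (or finitely generated) projective $M$ and one passes to general projectives by direct-sum compatibility of $c$ and $\ell$.
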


\begin{proof}
(1) If $M$ is injective, then
\[
c(M)(f\colon P\to X) = \coKer \left(\Hom_A(X,M) \to \Hom_A(P,M) \right) 
\cong \Hom_A(\Ker f,M) \cong r(M)(f\colon P\to X),
\]
so $c(M)\cong r(M)$. Now the functor
$\Hom_{\HKmod}(-,r(M)) \cong \Hom_A(e(-),M)$
is exact since $e$ is exact and $M$ is injective, so $r(M)$ is injective.

(2) It suffices to prove this for $M$ a finitely generated projective module (or even for $M=A$).
In this case it follows from the projective resolution of $c(M)$ since one can take $Q=M$ and the first term is zero.
\end{proof}

We remark that $c'$ sends injectives to injectives, but it need not send projectives to projectives.

\subsection*{Injective dimension}
The Nakayama functors $\nu$ and $\nu^-$ 
for $A$-modules are defined by $\nu(M) = D\Hom_A(M,A)$
and $\nu^-(M) = \Hom_A(DM,A)$.
They define inverse equivalences between the category of finite-dimensional projective $A$-modules
and the category of finite-dimensional injective $A$-modules.
For any modules $X$ and $Y$ there is a functorially defined map
\[
\Theta_{X,Y}\colon D\Hom_A(X,Y) \to \Hom_A(Y,\nu(X))
\]
which is an isomorphism if $X$ is a finite-dimensional projective module.

\begin{lem}
The following triangle is commutative
\[
\xymatrix{
D\Hom_A(\nu(P),\nu(P'))
\ar[rr]^{D\nu} 
\ar[rd]_{D\Theta_{P',\nu(P)}} 
& & 
D\Hom_A(P,P')
\ar[ld]^{\Theta_{P,P'}} 
\\ & 
\Hom_A(P',\nu(P))
&}
\]
for any finite-dimensional projective $A$-modules $P$ and $P'$. %, applying the functor 
%$\nu$ and dualizing, one gets a map $\Hom(\nu(P),\nu(P'))^*\to \Hom(P,P')^*$, and its composition with $\Theta_{P,P'} : \Hom(P,P')^* \to \Hom(P',\nu(P))$ is equal to the map $\Hom(\nu(P),\nu(P'))^* \to \Hom(P',\nu(P))$ dual to $\Theta_{P'\nu(P)}$.
\end{lem}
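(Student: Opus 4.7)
The plan is as follows. Both compositions define $K$-linear maps $D\Hom_A(\nu(P),\nu(P'))\to\Hom_A(P',\nu(P))$, and as $P,P'$ vary over finite-dimensional projective $A$-modules, every arrow in the triangle (the action of $\nu$ on morphisms, the natural maps $\Theta$, and the canonical identification $DDV\cong V$ for finite-dimensional $V$) is bifunctorial and biadditive. Since every finite-dimensional projective $A$-module is a direct sum of indecomposables $Ae_i$, by additivity and naturality it suffices to verify the commutativity in the case $P=Ae_i$ and $P'=Ae_j$ for primitive idempotents $e_i,e_j$.

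In that case I would unwind everything using $\Hom_A(Ae,M)\cong eM$. This gives $\Hom_A(P,P')\cong e_iAe_j$, $\nu(P)=D(e_iA)$, and, via the contravariant equivalence $D$ between finite-dimensional left and right $A$-modules, $\Hom_A(\nu(P),\nu(P'))\cong\Hom_{A^{op}}(e_jA,e_iA)\cong e_iAe_j$. Tracing through the definition of $\nu$ on morphisms (apply $\Hom_A(-,A)$ and then $D$) shows that the map $\nu\colon\Hom_A(P,P')\to\Hom_A(\nu(P),\nu(P'))$ becomes the identity of $e_iAe_j$, so $D\nu$ is the identity of $D(e_iAe_j)$. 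In the same way, both $\Theta_{P,P'}$ and $\Theta_{P',\nu(P)}$ reduce to the canonical isomorphism arising from the non-degenerate pairing $\Hom_A(X,A)\otimes_A Y\to \Hom_A(X,Y)$ (an isomorphism when $X$ is finite-dimensional projective). Taking $D$ of the second one and applying the double-dual identification $DD\cong\id$ one obtains the identity of $D(e_iAe_j)$ as well. Both compositions therefore agree with the canonical identification $D(e_iAe_j)=\Hom_A(P',\nu(P))$.

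Conceptually, the identity to be proved expresses that the Nakayama functor is compatible with the tensor–hom adjunction $\Hom_A(-,D(?))\cong D(?\otimes_A -)$ underlying the definition of $\Theta$; once this is recognised, the triangle commutes for the same reason that the standard pairings defining $\Theta$ are natural. The main obstacle I anticipate is not mathematical depth but careful bookkeeping: tracking left versus right actions, the direction of the $D$-duality, and the chosen convention for the Nakayama functor on morphisms, so that no transpositions are silently introduced. With conventions fixed consistently, all three reductions in the previous paragraph become the identity and the commutativity follows.
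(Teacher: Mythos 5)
Your proof is correct and is essentially the same as the paper's: the paper's entire proof is the one-line reduction ``one can reduce to the case when $P=P'=A$'' via naturality and additivity, and you carry out the same reduction (to $P=Ae_i$, $P'=Ae_j$, which amounts to the same thing) and then verify the resulting identities by unwinding the definitions.
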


\begin{proof}
One can reduce to the case when $P=P'=A$.
\end{proof}

\begin{lem}
\label{l:cid}
If $M$ is a finite-dimensional left $A$-module, then $\cp (M)$ is finitely copresented in $\HKmod$ and
$\idim_{\HKmod} (\cp (M)) \leq 1$. 
\end{lem}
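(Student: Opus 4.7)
The plan is to use the short exact sequence
\[
0 \to c(M) \to r(M) \to iqr(M) \to 0
\]
from Lemma~\ref{IntExt}(7), reducing the injective-dimension bound on $c(M)$ to separate bounds on $r(M)$ and $iqr(M)$ via the long exact $\Ext$-sequence.

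First I would bound $\idim r(M) \leq 1$. Taking an injective copresentation $0 \to M \to I^0 \to I^1$ of $M$ in $A$-$\modu$ and applying the left exact functor $r$ (a right adjoint) yields an exact sequence $0 \to r(M) \to r(I^0) \to r(I^1)$ in $\HKmod$. By Lemma~\ref{cproj-cinj}(1), each $r(I^j) = c(I^j)$ is injective in $\HKmod$, so this is an injective copresentation of $r(M)$. In particular $r(M)$ is finitely copresented, and it is also finitely presented, as the kernel of a map between finitely presented injectives.

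Next I would bound $\idim iqr(M) \leq 2$. By Lemma~\ref{l:cpd}, $c(M)$ is finitely presented; combined with the previous step this makes $iqr(M) = r(M)/c(M)$ finitely presented. Since $A$ has finite representation type, $\mcH$ has only finitely many indecomposables up to isomorphism, so $\HKmod$ is equivalent to $B$-$\Modu$ for the finite-dimensional projective quotient algebra $B = \End_{\mcH}(G)^{op}$. By Lemma~\ref{l:pdlemma}, every finitely presented object has projective dimension at most $2$, so $B$ has global dimension at most $2$; since $B$ is artin, this symmetrically bounds both projective and injective dimensions of every finitely generated $B$-module. Hence $\idim_{\HKmod} iqr(M) \leq 2$.

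For each $X \in \HKmod$, the long exact $\Ext$-sequence of the short exact sequence supplies
\[
\Ext^i_{\HKmod}(r(M), X) \to \Ext^i_{\HKmod}(c(M), X) \to \Ext^{i+1}_{\HKmod}(iqr(M), X),
\]
whose outer terms both vanish for $i \geq 2$ (using $\idim r(M) \leq 1$ and $\idim iqr(M) \leq 2$). Hence $\Ext^i_{\HKmod}(c(M), X) = 0$ for all $X$ and all $i \geq 2$, so $\idim_{\HKmod} c(M) \leq 1$. Since $c(M)$ is finitely presented in a finite-dimensional artin module category, it is also finitely copresented. The main obstacle in the argument is the second step: passing from a projective-dimension bound (Lemma~\ref{l:pdlemma}) to an injective-dimension bound on $iqr(M)$ requires identifying the finitely presented part of $\HKmod$ with $B$-$\modu$ and invoking the left-right symmetry of global dimension for artin algebras; everything else is formal once this bound is available.
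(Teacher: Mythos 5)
Your proposal has a genuine gap at the first step. You write down the exact sequence $0 \to r(M) \to r(I^0) \to r(I^1)$ and observe that the two right-hand terms are injective, but this only exhibits an \emph{injective copresentation} of $r(M)$; it does not show $\idim r(M)\leq 1$. For that you would need the cokernel of $r(M)\to r(I^0)$ to be injective, equivalently $r(I^0)\to r(I^1)$ to be surjective onto an injective module, and nothing you have said forces this. In fact $0\to M\to I^0\to I^1$ is only a copresentation of $M$ (so $I^0\to I^1$ need not be onto), and even if you completed it to an injective resolution, applying the merely left-exact $r$ would yield a complex that is not exact beyond degree one; the higher cohomology groups $R^i r(M)$ are exactly the obstruction to reading off $\idim r(M)$ from this sequence. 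Since your reduction via the long exact sequence of $\Ext$ needs the sharp bound $\idim r(M)\leq 1$ (the weaker bound $\leq 2$ that follows from a global-dimension argument would only give $\idim c(M)\leq 2$), the argument does not go through as written.

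A second, more structural problem: Lemma~\ref{l:cid} sits in \S\ref{s:mainrecoll}, where $A$ is an arbitrary basic finite-dimensional algebra; the hypothesis that $A$ has finite representation type is only introduced at the start of \S\ref{s:pqa}. Your second and fourth steps rely on the finite-dimensionality of $B$ (to invoke Lemma~\ref{l:pdlemma} globally, to use the artin symmetry between projective and injective global dimension, and to upgrade ``finitely presented'' to ``finitely copresented''), and $B$ only exists under the representation-finite hypothesis. So even if the $\idim r(M)$ step were fixed, your argument would establish a weaker statement than the one in the paper. The paper's own proof avoids both issues entirely: it works directly with the Nakayama functor and the Auslander--Reiten translate to construct an explicit two-term injective copresentation of $c(M)$, namely $0\to c(M)\to D\Hom_{\mcH}((\nu^- I\to 0),-)\to D\Hom_{\mcH}((\nu^- J\to\tau^- M),-)\to 0$, which at once gives finite copresentation and $\idim\leq 1$ with no representation-finiteness assumption.
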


\begin{proof}
Choose a minimal injective copresentation 
$0\to M\to I \to J$.
Applying $\nu^-$ and using the definition of the Auslander-Reiten translate $\tau^-$ 
we get an exact sequence
\[
0\to \nu^- M \to \nu^- I \to \nu^- J \to \tau^-(M)\to 0.
\]
This gives an object $\nu^- J\to \tau^- M$ in $\mcQ$
and a morphism from $\nu^- I\to 0$ to $\nu^- J\to \tau^- M$.

For an object $f\colon P\to X$ in $\mcQ$, consider the sequence
\[
0
\to
\Hom_{\mcH}((\nu^- J\to\tau^- M),(P\to X))
\to
\Hom_{\mcH}((\nu^- I\to 0),(P\to X))
\xrightarrow{h}
\Hom(M,\nu P)
\]
where $h$ sends a map in the left hand space, which can be identified with $\Hom(\nu^- I,\Ker f)$
to the map obtained by first composing with the inclusion $\Ker f\to P$ to get a map $\nu^-I\to P$,
then applying $\nu$ to get a map $I \to \nu P$, and then composing with the inclusion $M\to I$
to get a map $M\to \nu P$, as required.
The sequence is exact in the middle because a map $\nu^- I\to \Ker f$ is sent to 0 by $h$
if and only if the map $I\to \nu P$ factors through the map $I\to J$ (using that $J$ is injective),
so if and only if the map $\nu^- I\to P$ factors through $\nu^- I \to \nu^- J$,
which is the condition for the map 
from $\nu^- I\to 0$ to $P\to X$ to come from a map from
$\nu^- J\to\tau^- M$ to $P\to X$.
The sequence is easily seen to be exact on the left.
Now the dual of $h$ is the composition along the top of the diagram
\[
\begin{CD}
D\Hom(M,\nu P) @>>> D\Hom(I,\nu P) @>>> D\Hom(\nu^- I,P) @>>> D\Hom(\nu^- I,\Ker f) 
\\
D\Theta_{P,M} @| 
D\Theta_{P,I} @| 
\Theta_{\nu^- I,P} @| 
\Theta_{\nu^- I,\Ker f} @|
\\
\Hom(P,M) @>>> \Hom(P,I) @= \Hom(P,I) @>>> \Hom(\Ker f,I) 
\end{CD}
\]
The diagram commutes - the middle square commutes by dualizing a triangle in a lemma above.
The composition along the bottom can also be factorized as
\[
\Hom(P,M)\to \Hom(\Ker f,M) \hookrightarrow \Hom(\Ker f,I)
\]
so the image of the dual of $h$ is isomorphic to the image of the map
$\Hom(P,M)\to \Hom(\Ker f,M)$, which is $c(M)(P\to X)$.
Thus, dualizing the original sequence, we get an exact sequence
\[
0 \to c(M)
\to
D\Hom_{\mcH}((\nu^- I\to 0),-)
\to
D\Hom_{\mcH}((\nu^- J\to\tau^- M),-)
\to
0,
\]
giving the result.
\end{proof}

\subsection*{Stable and costable functors}
We want to characterize stable and costable objects for our main recollement.

\begin{lem} 
\label{l:Fstable}
Let $F$ in $\HKmod$. The following are equivalent. 
\begin{itemize}
\item[(1)] $F$ is stable.
\item[(2)] There is an injective $A$-module $I$ such that $F$ is a subfunctor of $\cp (I)$.
\item[(3)]  There is an\ $A$-module $M$ such that $F$ is a subfunctor of $\cp (M)$.
\end{itemize}
\end{lem}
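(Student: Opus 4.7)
The plan is to establish the cycle of implications (2) $\Rightarrow$ (3) $\Rightarrow$ (1) $\Rightarrow$ (2), where the first implication is trivial and the other two follow from combining the general properties of stable objects in Lemma/Definition \ref{stable} with the specific result $c(I) = r(I)$ for $I$ injective from Lemma \ref{cproj-cinj}.

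For (3) $\Rightarrow$ (1), I would argue as follows. By the remark after Definition \ref{d:bistable}, the intermediate extension $c(M)$ is bistable, hence in particular stable. The last sentence of Lemma/Definition \ref{stable} then tells us that every subfunctor of a stable functor is stable, so $F \subseteq c(M)$ forces $F$ to be stable.

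For (1) $\Rightarrow$ (2), I would use characterization (2) of Lemma/Definition \ref{stable}: $F$ stable means the natural map $F \to r(e(F))$ is a monomorphism. Choose an embedding $e(F) \hookrightarrow I$ of the $A$-module $e(F)$ into an injective $A$-module $I$ (for instance the injective envelope, or a product of injective envelopes of simples for infinite-dimensional $e(F)$). Since $r$ is a right adjoint, it is left exact, so applying $r$ yields a monomorphism $r(e(F)) \hookrightarrow r(I)$. By Lemma \ref{cproj-cinj}(1), we have $r(I) = c(I)$. Composing gives the desired monomorphism $F \hookrightarrow c(I)$.

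The main obstacle is really just bookkeeping: confirming that (a) right adjoints between abelian categories are automatically left exact, hence $r$ preserves monomorphisms, and (b) the equality $r(I) = c(I)$ genuinely requires $I$ to be injective (which is why condition (2) is stated with injectives and (3) with arbitrary modules). There is no delicate argument needed: all the content is packaged into the preceding lemmas, and this result is essentially a direct consequence of combining them.
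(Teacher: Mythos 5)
Your proof is correct and follows the paper's own argument essentially verbatim: the paper also dispatches (3)$\Rightarrow$(1) by citing that subobjects of stable objects are stable, and proves (1)$\Rightarrow$(2) by taking the injective hull $I$ of $e(F)$, applying the left-exact functor $r$ to the embedding $e(F)\hookrightarrow I$, invoking $r(I)=c(I)$ from Lemma~\ref{cproj-cinj}, and composing with the monomorphism $F\to re(F)$ from Lemma/Definition~\ref{stable}(2). There is no meaningful divergence from the paper's proof.
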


\begin{proof} 
(3) implies (1) because submodules of stable modules are stable. 
Assume $F$ fulfills (1). Let $I$ be an injective hull of $\eprime (F)$. We apply the left exact functor $\rp$ and get a monomorphism $\rp \eprime (F) \to \rp (I)$. By Lemma~\ref{cproj-cinj} we have $\rp (I) =\cp (I)$ and by Lemma~\ref{stable}, we have a monomorphism $F\to \rp \eprime (F)$. Composition gives a monomorphism $F\to \cp (I)$ and (2) is fulfilled.
Now, (2) implies clearly (3). 
\end{proof}

\begin{lem}
\label{l:Hcostable}
Let $H$ in $\HKmod$. The following are equivalent. 
\begin{itemize}
\item[(1)] $H$ is costable.
 \item[(2)] There is a projective $A$-module $P$ such that $H$ is a quotient of $\cp (P)$.
 \item[(3)]  There is a $A$-module $M$ such that $H$ is a quotient of $\cp (M)$.
\end{itemize}
 \end{lem}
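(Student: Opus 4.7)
The strategy is to dualize the argument for Lemma \ref{l:Fstable}, replacing injective hulls by projective covers, monomorphisms by epimorphisms, the right adjoint $r$ by the left adjoint $\ell$, and part (1) of Lemma \ref{cproj-cinj} by part (2).

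\textbf{Plan.} I will show $(3) \Rightarrow (1) \Rightarrow (2) \Rightarrow (3)$.

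First I would verify $(3) \Rightarrow (1)$. Any $c(M)$ is costable: indeed $ec(M) \cong M$ by Lemma~\ref{IntExt}(1), and the natural map $\ell e c(M) \to c(M)$ factors the epimorphism $\ell(M) \twoheadrightarrow c(M)$ that arises in the definition of the intermediate extension, so it is an epimorphism, giving condition (2) of Lemma/Definition~\ref{costable}. Since quotient objects of costable objects are again costable (Lemma/Definition~\ref{costable}, consequence of (3)), any quotient of $c(M)$ is costable.

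Next, $(1) \Rightarrow (2)$: assume $H$ is costable. Choose a finite-dimensional projective $A$-module $P$ together with an epimorphism $P \twoheadrightarrow e(H)$ (for instance, a projective cover of $e(H)$). Since $\ell$ is a left adjoint it is right exact, so it yields an epimorphism $\ell(P) \twoheadrightarrow \ell e(H)$. By Lemma~\ref{cproj-cinj}(2) we have $\ell(P) = c(P)$. By costability (Lemma/Definition~\ref{costable}(2)) the natural map $\ell e(H) \to H$ is an epimorphism. Composing the two epimorphisms exhibits $H$ as a quotient of $c(P)$.

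Finally, $(2) \Rightarrow (3)$ is immediate because a projective $A$-module is in particular an $A$-module. There is no real obstacle here; the only point requiring care is checking that $c(M)$ is costable in step one, but this is built into the definition of the intermediate extension functor.
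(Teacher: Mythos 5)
Your proof is correct, and it is precisely the dualization of the paper's proof of Lemma~\ref{l:Fstable} (the paper leaves the proof of Lemma~\ref{l:Hcostable} implicit as the dual argument): projective cover in place of injective hull, the right exact left adjoint $\ell$ in place of the left exact right adjoint $r$, Lemma~\ref{cproj-cinj}(2) in place of (1), and condition (2) of Lemma/Definition~\ref{costable} in place of that of Lemma/Definition~\ref{stable}. The only small stylistic difference is in $(3)\Rightarrow(1)$: you re-derive costability of $c(M)$ by identifying the counit $\ell e c(M)\to c(M)$ with the surjection $p_M\colon \ell(M)\twoheadrightarrow c(M)$, whereas it would be slightly shorter to invoke Lemma~\ref{IntExt}(3) directly (no non-zero quotients in $\mcA$), exactly as the paper's proof of \ref{l:Fstable} does with ``submodules of stable modules are stable.''
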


We remark the following. 
\begin{lem}\label{Ext2} 
If
$
0\to F\to \cp (M) \to H \to 0
$
is a short exact sequence in $\HKmod$, with $M$ an $A$-module,
then $F$ is stable, $\cp (M)$ is bistable and $H$ is costable. 
If $\idim F\leq 2$ or $\pdim H \leq 2$, then  
\[ 
\EXT^2_{\HKmod}(F,F)=0=\EXT^2_{\HKmod}(H,H). 
\]
\end{lem}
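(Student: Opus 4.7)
The plan is to handle the two parts in sequence: first the stability/bistability/costability claims, and then the two $\EXT^2$ vanishings, which I will reduce to a single computation of $\EXT^3_{\HKmod}(H,F)$.

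For the first part, I would appeal to the criterion stated right after Definition~\ref{d:bistable}, namely that an object is bistable if and only if it is isomorphic to $\cp \eprime$ of itself. Since Lemma~\ref{IntExt}(1) gives $\eprime\cp(M)\cong M$, we get $\cp\eprime\cp(M)\cong \cp(M)$, so $\cp(M)$ is bistable, hence both stable and costable. Then $F$ is a subobject of the stable object $\cp(M)$, so stable by Lemma~\ref{stable} (every subobject of a stable object is stable), and $H$ is a quotient of the costable object $\cp(M)$, hence costable by Lemma~\ref{costable}.

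For the second part, the key input is Lemmas~\ref{l:cpd} and~\ref{l:cid}, which give $\pdim_{\HKmod}\cp(M)\leq 1$ and $\idim_{\HKmod}\cp(M)\leq 1$. I would apply $\Hom_{\HKmod}(-,F)$ to the short exact sequence; at degree two the long exact sequence reads
\[
\EXT^2_{\HKmod}(\cp(M),F)\to \EXT^2_{\HKmod}(F,F)\to \EXT^3_{\HKmod}(H,F)\to \EXT^3_{\HKmod}(\cp(M),F),
\]
and both outer terms vanish because $\pdim_{\HKmod}\cp(M)\leq 1$, giving an isomorphism $\EXT^2_{\HKmod}(F,F)\cong \EXT^3_{\HKmod}(H,F)$. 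Dually, applying $\Hom_{\HKmod}(H,-)$ to the sequence and using $\idim_{\HKmod}\cp(M)\leq 1$ on the neighbouring terms produces the isomorphism $\EXT^2_{\HKmod}(H,H)\cong \EXT^3_{\HKmod}(H,F)$. Both $\EXT^2$ groups are therefore identified with one and the same third Ext group.

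Finally, I would observe that $\EXT^3_{\HKmod}(H,F)$ vanishes under either of the stated hypotheses: if $\pdim H\leq 2$ then $\EXT^3_{\HKmod}(H,-)$ is identically zero, while if $\idim F\leq 2$ then $\EXT^3_{\HKmod}(-,F)$ is identically zero; either way we conclude $\EXT^2_{\HKmod}(F,F)=0=\EXT^2_{\HKmod}(H,H)$. There is no real obstacle here; the only conceptual point is to recognise that both desired vanishings are governed by the same third Ext group, so that the two alternative hypotheses are symmetric ways of killing it.
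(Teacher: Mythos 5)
Your proof is correct and follows essentially the same approach as the paper: bistability of $\cp(M)$ plus sub/quotient stability for $F$ and $H$, then the long exact sequence argument reducing both $\EXT^2$-vanishings to the vanishing of the single group $\EXT^3_{\HKmod}(H,F)$. A small improvement: you invoke $\pdim_{\HKmod}\cp(M)\leq 1$ (Lemma~\ref{l:cpd}) to kill $\EXT^2_{\HKmod}(\cp(M),F)$, which is the precise justification, whereas the paper's text cites Theorem~\ref{rigid} for that term even though that theorem only concerns $\EXT^1$ between objects in the image of $c$.
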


\begin{proof}
The first part is straightforward.
Apply $\Hom_{\HKmod}(-, F)$ to the sequence to get a long exact sequence
\[
\cdots \to \EXT^2_{\HKmod}(\cp (M), F) \to \EXT^2_{\HKmod}(F,F)\to \EXT^3_{\HKmod}(H,F)\to \cdots. \]
Now $\EXT^2_{\HKmod}(\cp(M), F)=0$ by Theorem \ref{rigid}, and
$\EXT^3_{\HKmod}(H,F) = 0$ by the hypothesis on injective or projective dimension,
so $\EXT^2(F,F)=0$. 
Similarly, to show that
$\EXT^2_{\HKmod}(H,H)=0$, apply $\Hom_{\HKmod}(H,-)$ and use the analogue argument.
\end{proof}

\section{The projective quotient algebra}
\label{s:pqa}
Let $A$ be a basic finite-dimensional algebra as in \S\ref{s:qhdef} and \S\ref{s:mainrecoll}.
We now suppose that $A$ has finite representation type. In this case $\mcH$ has only finitely many
indecomposable objects up to isomorphism, those of the form $P_U\to U$, where $U$ is a
non-projective indecomposable $A$-module and $P_U$ is a projective cover, and those of the form $P_i\to 0$. 
We denote by $G$ the direct sum of all these indecomposable objects
and define $B = B_A = \End_{\mcH}(G)^{op}$. It is a finite-dimensional basic algebra,
and $\mcH$ is equivalent to the category of finitely generated projective $B$-modules.
Now the object $A\to 0$ in $\mcH$ is the direct sum of the objects 
$P_i \to 0$, so it is is a summand of $G$,
and $e\in B$ is the projection of $G$ onto this summand.
Then $eB e \cong \End_{\mcH}(A\to 0)^{op} \cong \End_A(A)^{op} \cong A$.
The recollement given at the start of this section can be identified with
the one stated in the introduction to the paper, given by the idempotent $e\in B$.

% Let $e_i\in B$ ($1\le i\le n$) be the idempotent corresponding to the projection onto the summand $P_i\to 0$, and let $e_i\in B$ ($n<i\le m$) be the idempotents corresponding to the projections onto indecomposable summands of the form $P_U\to U$. Then $e_1,\dots,e_m$ is a complete set of primitive orthogonal idempotents in $B$.

Recall that $E$ is the direct sum of all indecomposable $A$-modules and the
Auslander algebra is $\Gamma = \End_A(E)^{op}$. 
We define $C=c(E)$. Since $c$ is fully faithful, we have $\End_B(C)^{op} \cong \Gamma$.

\begin{lem} 
\label{gldim2}
The projective quotient algebra $B$ has global dimension at most $2$
and the module $C$ is a tilting and cotilting module.
\end{lem}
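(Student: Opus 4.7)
My plan is to derive both claims from the machinery already in place, using representation-finiteness of $A$ to meet the finite-presentation hypothesis of Lemma \ref{l:pdlemma}, and using the fully faithfulness of the intermediate extension functor $c$ to count indecomposable summands.

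For the global dimension bound, I would reduce to showing every simple $B$-module has projective dimension at most $2$. Under the equivalence $\HKmod \cong B$-$\Modu$, such a simple corresponds to a simple functor $S_X$ at some indecomposable $X \in \mcH$. Because $A$ is representation-finite, Lemma \ref{initial} shows that $\mcQ$ has only finitely many indecomposables up to isomorphism, so any functor on $\mcQ$ taking finite-dimensional values is automatically finitely presented. Now $i_0(S_X)$ takes the value $K$ at $X$ and $0$ on all other indecomposables of $\mcQ$ (in particular on every object of $\mcE$), hence is finitely presented in $\QKmod$. Lemma \ref{l:pdlemma} then yields $\pdim_{\HKmod} S_X \leq 2$, so $B$ has global dimension at most $2$.

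For $C$ to be a tilting and cotilting module, I would verify the standard conditions $\pdim_B C \leq 1$, $\idim_B C \leq 1$ and $\Ext^1_B(C,C) = 0$; these follow from Lemmas \ref{l:cpd}, \ref{l:cid} and Theorem \ref{rigid} applied summand-wise to $E$. By the standard (Bongartz) criterion, it then suffices to verify that $C$ has as many pairwise non-isomorphic indecomposable summands as $B$ has simple modules. The number of simples of $B$ equals the number of indecomposables of $\mcH$, which by Lemma \ref{initial} consists of the $P_U \to U$ for non-projective indecomposable $U$ together with the $P_i \to 0$ for $i=1,\dots,n$, yielding $|\mathrm{ind}(A)|$ in total. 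On the other hand, $E$ is a direct sum of $|\mathrm{ind}(A)|$ pairwise non-isomorphic indecomposable $A$-modules and, since $c$ is fully faithful and preserves indecomposables, $C=c(E)$ inherits exactly this count. The two numbers agree, so $C$ is a classical tilting module; by applying the cotilting analogue of the Bongartz criterion (or by a duality argument), $C$ is also cotilting.

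The main non-routine point is the first part: the key observation is that representation-finiteness of $A$ is exactly what makes Lemma \ref{l:pdlemma} applicable to every simple $B$-module, by ensuring that $i_0$ of any finite-length functor in $\HKmod$ is finitely presented in $\QKmod$. Once this is in hand, the tilting/cotilting conclusion reduces to assembling the rigidity, projective-dimension and injective-dimension results already proved, together with the standard counting criterion.
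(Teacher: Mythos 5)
Your proof is correct and takes essentially the same route as the paper: global dimension via Lemma~\ref{l:pdlemma} (with representation-finiteness supplying the finite-presentation hypothesis), and the tilting/cotilting property via Lemmas~\ref{l:cpd}, \ref{l:cid}, Theorem~\ref{rigid} together with a count of indecomposable summands. The only difference is that you spell out the reduction to simples and the finitely-presented check explicitly, whereas the paper states these steps more tersely.
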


\begin{proof}
Any finite-dimensional $B$-module has projective dimension $\le 2$ by
Lemma~\ref{l:pdlemma}, giving the global dimension claim.
The number of indecomposable summands of $C$ is equal to the number of
indecomposable $A$-modules, which is also the number of indecompsable objects in $\mcH$,
so the number of simple $B$-modules. Thus
$C$ is a tilting and cotilting module by Lemmas~\ref{l:cpd}, \ref{l:cid} and Theorem~\ref{rigid}.
\end{proof}

\begin{thm}
\label{t:cformula}
The intermediate extension functor $c:\text{$A$-$\Modu$} \to \text{$B$-$\Modu$}$
is given by $c(X) = C \otimes_\Gamma \Hom_A(E,X)$.
It satisfies $\Ext^1_B(c(X),c(Y)) = 0$ for any $A$-modules $X,Y$.
\end{thm}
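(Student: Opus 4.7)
The overall strategy is to view $C = c(E)$ as a classical tilting $B$-module and invoke Brenner--Butler tilting theory. By Lemma~\ref{gldim2}, $C$ is tilting, and since $c$ is fully faithful, $\End_B(C)^{op} \cong \End_A(E)^{op} = \Gamma$. This makes $C$ into a $(B, \Gamma)$-bimodule and provides the adjunction $C \otimes_\Gamma - \dashv \Hom_B(C, -)$ between $\Gamma\text{-}\Modu$ and $B\text{-}\Modu$. Brenner--Butler then states that the counit $\varepsilon_M \colon C \otimes_\Gamma \Hom_B(C, M) \to M$ is an isomorphism whenever $M$ lies in the tilting torsion class $\mcT = \{M \in B\text{-}\Modu : \Ext^1_B(C, M) = 0\}$.

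For the formula itself, full faithfulness of $c$ first yields a natural identification $\Hom_B(C, c(X)) = \Hom_B(c(E), c(X)) \cong \Hom_A(E, X)$, so $\varepsilon_{c(X)}$ becomes a natural candidate map $C \otimes_\Gamma \Hom_A(E, X) \to c(X)$. The remaining content is to verify $c(X) \in \mcT$. For finite-dimensional $X$ this is exactly Theorem~\ref{rigid}. For a general $A$-module $X$ I would decompose $X = \bigoplus_\alpha X_\alpha$ into finite-dimensional indecomposable summands, using that $A$ has finite representation type; apply Lemma~\ref{IntExt}(5) to get $c(X) = \bigoplus_\alpha c(X_\alpha)$; and invoke Lemma~\ref{l:cpd} which ensures each $c(X_\alpha)$ is finitely presented. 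Since $C$ itself is finitely presented, $\Ext^1_B(C, -)$ commutes with the direct sum, reducing the required vanishing to the finite-dimensional case.

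The Ext vanishing $\Ext^1_B(c(X), c(Y)) = 0$ is essentially Theorem~\ref{rigid} itself (using the equivalence $\HKmod \simeq B\text{-}\Modu$ coming from the additive generator $G$ of $\mcH$ with $\End_\mcH(G)^{op} = B$). For general $X, Y$ the same decomposition argument applies: each $c(X_\alpha)$ being finitely presented makes $\Ext^1_B(c(X_\alpha), -)$ commute with direct sums in the second argument, while $\Ext^1_B(-, c(Y))$ turns direct sums in the first argument into products, so everything reduces to the finite-dimensional case. The main point requiring care is naturality, namely that the isomorphism $c(X) \cong C \otimes_\Gamma \Hom_A(E, X)$ really comes from the counit of the tilting adjunction composed with the identification from full faithfulness of $c$; but this is a bookkeeping check rather than a substantial obstacle.
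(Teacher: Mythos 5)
Your argument is correct, but it takes a genuinely different (and heavier) route than the paper. You construct essentially the same comparison map that the paper uses: the counit of the adjunction $C\otimes_\Gamma(-)\dashv\Hom_B(C,-)$, precomposed with the identification $\Hom_B(C,c(X))\cong\Hom_A(E,X)$ from full faithfulness of $c$, agrees with the paper's $h_X\colon C\otimes_\Gamma\Hom_A(E,X)\to c(X)$ induced by $f\mapsto c(f)$. Where the two proofs diverge is in showing this map is an isomorphism. You invoke the Brenner--Butler theorem, which requires verifying that $c(X)$ lies in the tilting torsion class $\mcT=\Gen(C)$, and you do this via Theorem~\ref{rigid} (plus the direct-sum reduction for infinite-dimensional $X$). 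The paper avoids Brenner--Butler entirely: it observes that $h_E$ is the canonical isomorphism $C\otimes_\Gamma\Gamma\cong C$, that both $X\mapsto C\otimes_\Gamma\Hom_A(E,X)$ and $X\mapsto c(X)$ commute with arbitrary direct sums (the latter by Lemma~\ref{IntExt}(5), the former because $E$ is finitely generated), and that every $A$-module is a direct sum of summands of $E$ by representation-finiteness; this yields the formula by a purely formal argument, with Theorem~\ref{rigid} used only for the $\Ext^1$-vanishing statement, exactly as in your last paragraph. Your approach is more conceptual---it explains the formula as a shadow of tilting theory---but it imports the (big-module version of the) Brenner--Butler theorem before the paper has recalled it, and it makes the formula logically dependent on Theorem~\ref{rigid}, which the paper's direct argument does not. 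Both are valid; the paper's is shorter and more self-contained.
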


\begin{proof}
The assignment $\Hom_A(E,X)\to \Hom(c(E),c(X))$ induces a map
$h_X:c(E)\otimes_\Gamma \Hom_A(E,X)\to c(X)$ which is functorial in $X$. Now $h_E$ is an
isomorphism, both sides commute with direct sums, and any $A$-module is
a direct sum of summands of $E$, so $h_X$ is an isomorphism for all $X$.
The assertion about extensions follows from Theorem~\ref{rigid} since every
$A$-module is a direct sum of finite-dimensional $A$-modules.
\end{proof}

The next lemma follows from Lemmas~\ref{l:Fstable} and \ref{l:Hcostable}.

\begin{lem}  
The stable $B$-modules are those cogenerated by $C$ and the
costable modules are those generated by $C$. 
\end{lem}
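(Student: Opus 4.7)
The plan is to deduce both equivalences directly from Lemmas~\ref{l:Fstable} and~\ref{l:Hcostable}, plus two essentially formal ingredients: the closure properties of (co)stability under (co)products and sub/quotient objects already visible in Lemmas~\ref{stable} and~\ref{costable}, and the fact that over a representation-finite finite-dimensional algebra every $A$-module decomposes as a direct sum of indecomposable summands of $E$ (the Auslander--Ringel--Tachikawa theorem).

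For the stable case, the $\Leftarrow$ direction is the easy one: $C=c(E)$ is bistable by Lemma~\ref{IntExt}(1), hence in particular stable. Characterisation~(1) of Lemma~\ref{stable} shows that stable modules are closed under arbitrary products, since $\Hom_B(G,-)$ turns products into products, and condition~(3) shows they are closed under submodules. Hence any $F$ with $F\hookrightarrow C^I$ is stable. For the $\Rightarrow$ direction, given a stable $F$ I would apply Lemma~\ref{l:Fstable} to obtain a monomorphism $F\hookrightarrow c(M)$ for some $A$-module $M$. Writing $M\cong\bigoplus_\alpha E_\alpha$ with each $E_\alpha$ an indecomposable summand of $E$, and noting that $c$ commutes with direct sums by Lemma~\ref{IntExt}(5) (the hypothesis there holds in $B$-$\Modu$), we get $c(M)\cong\bigoplus_\alpha c(E_\alpha)$. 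Each $c(E_\alpha)$ is a direct summand of $C$, so $c(M)$ embeds into a product of copies of $C$, and so does $F$.

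The costable/generated-by-$C$ equivalence is entirely dual. For $\Leftarrow$, $C$ is costable (being bistable); by Lemma~\ref{costable}(1) costability is preserved under arbitrary direct sums, and by~(3) under quotients, so any $H$ generated by $C$ is costable. For $\Rightarrow$, Lemma~\ref{l:Hcostable} furnishes an epimorphism $c(M)\twoheadrightarrow H$; decomposing $M$ as before and using that $c$ commutes with direct sums exhibits $c(M)$, and hence $H$, as a quotient of a direct sum of copies of $C$.

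The only non-formal input is the decomposition $M\cong\bigoplus_\alpha E_\alpha$, which is precisely where the standing finite representation type hypothesis on $A$ enters; the rest is bookkeeping with the recollement machinery already set up in Section~\ref{s:mainrecoll}. I do not anticipate any genuine obstacle here.
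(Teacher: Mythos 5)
Your proof is correct and fills in exactly the details the paper leaves implicit behind its one-line ``follows from Lemmas~\ref{l:Fstable} and~\ref{l:Hcostable}'': the key lemmas are the same, and the remaining ingredients (closure of (co)stables under (co)products and sub/quotient objects from Lemmas~\ref{stable} and~\ref{costable}, commutation of $c$ with direct sums from Lemma~\ref{IntExt}(5), and decomposition of modules into indecomposable summands of $E$) are precisely the formal bookkeeping one would use. A very minor observation: using condition~(2) of Lemmas~\ref{l:Fstable} and~\ref{l:Hcostable} (injective $I$, projective $P$) lets you replace the appeal to the Auslander--Ringel--Tachikawa theorem by the more elementary fact that injective and projective modules always decompose into indecomposables, but both routes are fine.
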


We now recall some results from tilting theory (which apply to the projective quotient algebra). 
For the moment let $B$ be an arbitrary finite dimensional algebra.  A tilting and cotilting $B$-module $C$ induces two torsion pairs on $B$-$\modu$ 
\[ 
\left(\mcT=\Gen (C), \mcF= \Ker \Hom_B(C,-)\right), \left(\mcX= \Ker \Hom_B(-,C) , \mcY=\Cogen (C)\right),   
\]
the first from the tilting property, the second from the cotilting property. Because $C$ is cotilting all  injective modules are objects in $\Gen (C)$ and because $C$ is tilting all projective objects are in $\Cogen (C)$. We set $\Gamma = \End_B (C)^{op}$, then $\Dual C$ is a tilting and cotilting (left) $\Gamma $-module
and by the Brenner-Butler theorem the two torsion pairs in $B$-$\modu$ in the order above are mapped under (four different) equivalences to the subcategories of $\Gamma$-$\modu $
\[ 
 \left( \mcY^\prime =\Cogen (\Dual C), \mcX^\prime =\Ker \Hom_{\Gamma} (-, \Dual C)\right)  ,\left( \mcF^\prime = \Ker \Hom_{\Gamma} ( \Dual C,-), \mcT^\prime =\Gen (\Dual C)\right) 
\]   
respectively (swapping torsion pairs and torsion and torsion-free classes).  
The first two equivalences hold since $C$ is tilting, the second two since $C$ is cotilting. 

This leads to another characterization of 
projective quotient algebras.
Let $B$ again be the projective quotient algebra.

\begin{thm}
\label{t:kerhomaddcond}
We have
$\Ker\Hom_B(C,-) = \Ker\Hom_B(-,C)$
and 
$\Gen (C)\cap \Cogen (C) =\Add (C)$.
\end{thm}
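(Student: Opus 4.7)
The plan is to leverage two pieces of data already assembled in the paper: the TTF-triple attached to the main recollement of \S\ref{s:mainrecoll}, and the two torsion pairs one obtains from the tilting and cotilting properties of $C$. The key preliminary step will be to identify the TTF-triple $(\Ker q,\Ker e,\Ker p)$ explicitly. The lemma immediately preceding the theorem identifies stable $B$-modules with $\Cogen(C)$ and costable $B$-modules with $\Gen(C)$, so the TTF-triple reads $(\Gen(C),\Ker e,\Cogen(C))$; consequently both $(\Gen(C),\Ker e)$ and $(\Ker e,\Cogen(C))$ will be torsion pairs in the category of $B$-modules.

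For the first equality, I will compare these two torsion pairs with the Brenner--Butler torsion pairs $(\Gen(C),\Ker\Hom_B(C,-))$ and $(\Ker\Hom_B(-,C),\Cogen(C))$ attached to the tilting-cotilting module $C$. Since either class of a torsion pair determines the other, the comparison forces
\[
\Ker\Hom_B(C,-)=\Ker e=\Ker\Hom_B(-,C),
\]
yielding the first equality directly.

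For the second equality, the inclusion $\Add(C)\subseteq\Gen(C)\cap\Cogen(C)$ is immediate. For the reverse I would take $M\in\Gen(C)\cap\Cogen(C)$, observe that $M$ is simultaneously stable and costable, hence bistable, and then apply the observation right after Definition~\ref{d:bistable} to obtain $M\cong c(e(M))$. Finite representation type of $A$ then forces $e(M)$ to decompose as a (possibly infinite) direct sum of indecomposable summands of $E$, so $e(M)$ is a summand of some $E^{(I)}$; applying $c$, which preserves direct sums by Lemma~\ref{IntExt}(5), yields that $M\cong c(e(M))$ is a summand of $C^{(I)}$, so $M\in\Add(C)$.

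The main obstacle will be the bookkeeping in the first paragraph, namely verifying that the TTF-triple genuinely matches both Brenner--Butler torsion pairs in the correct positions; once that identification is secured, both equalities reduce to a one-line comparison of torsion pairs together with a short manipulation using the bistable characterization.
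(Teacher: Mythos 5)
Your proposal is correct and follows essentially the same route as the paper: it uses the identification of the recollement's TTF-triple with $(\Gen(C),\Ker e,\Cogen(C))$ and the Brenner--Butler torsion pairs to pin down $\Ker e$ for the first equality, and the characterization of bistable objects as $F\cong c e(F)$ together with representation-finiteness of $A$ for the second. The paper states these steps more tersely (essentially as one sentence each), but the underlying argument is the one you spell out.
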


Here $\Add (C)$ is the full subcategory of $B$-$\Modu$ of summands of direct sums of copies of~$C$.

\begin{proof}
The TTF-triple of our recollement (see after Definition~\ref{d:bistable}) is given by 
\[ 
(\Gen (C), \Ker \Hom_B(C,-) =\Ker \eprime =\Ker \Hom_B(-,C) , \Cogen (C) ),
\]
giving the first equality, $\mcF = \mcX$.
The second follows from 
the discussion after Definition~\ref{d:bistable}.
\end{proof}

Recall from the introduction that there is a unique basic tilting  cotilting $\Gamma$-module $T$ generated and cogenerated by projective-injectives.

\begin{thm}
\label{t:dct}
We have $\Dual C \cong T$, so $B \cong \End_\Gamma(T)$.
\end{thm}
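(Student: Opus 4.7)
The plan is to verify the hypotheses of the uniqueness statement in Lemma~\ref{l:introlem}: I will show that $\Dual C$ is a basic tilting-cotilting left $\Gamma$-module which is generated and cogenerated by a projective-injective $\Gamma$-module. Lemma~\ref{l:introlem} then forces $\Dual C\cong T$, and the identification $B\cong\End_\Gamma(T)^{op}$ (as stated in the introduction) follows from the tilting identity $\End_\Gamma(\Dual C)^{op}\cong B$ arising from $\End_B(C)^{op}\cong\Gamma$.

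First I would observe that $C=\bigoplus_i c(E_i)$ is a basic $B$-module, since $c$ is fully faithful and preserves indecomposables; Lemma~\ref{gldim2} supplies the tilting-cotilting property. Standard tilting duality then makes $\Dual C$ a basic tilting-cotilting left $\Gamma$-module.

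The main work is to show $\Dual C$ is generated and cogenerated by a projective-injective $\Gamma$-module. Since $\Gamma$ is the Auslander algebra of $A$, the indecomposable projective-injective $\Gamma$-modules are exactly $\Hom_A(E,I_k)$ for $I_k$ an indecomposable injective $A$-module; write $\bar I=\bigoplus_k I_k$ and $Q=\Hom_A(E,\bar I)$. Under the Brenner-Butler equivalence $\Hom_B(C,-)\colon\Add C\xrightarrow{\sim}\Gamma\text{-proj}$ (which sends $c(X)\mapsto\Hom_A(E,X)$ by the fully-faithfulness of $c$), these projective-injectives correspond exactly to the injective $B$-modules $c(I_k)\in\Add C$ (injective in $B$ by Lemma~\ref{cproj-cinj}(1)). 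Using the standard identification $\Dual C\cong\Hom_B(C,\Dual B)$ and decomposing $\Dual B$ into indecomposable injective $B$-module summands $I^B_l$, one gets $\Dual C=\bigoplus_l\Hom_B(C,I^B_l)$ as a decomposition into indecomposable $\Gamma$-summands (indexed by primitive idempotents of $B$). When $I^B_l\in\Add C$---which occurs precisely when the corresponding indecomposable object of $\mcH$ lies in $\mcP$, i.e.\ corresponds to a projective indecomposable $A$-module---the summand $\Hom_A(E,I_k)$ is already projective-injective, so it is trivially generated and cogenerated by $Q$.

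The main obstacle is to handle the remaining indecomposable $\Gamma$-summands, those corresponding to non-projective indecomposable $A$-modules. For these I would use the projective resolutions from Lemma~\ref{l:cpd} and the injective copresentations from Lemma~\ref{l:cid} of $c(-)$ in $B\text{-}\modu$, combined with the Auslander-Reiten combinatorics of $\Gamma$, to show that the top and socle of such a summand consist only of simple $\Gamma$-modules $S_j$ whose projective cover $\Gamma e_j=\Hom_A(E,E_j)$ and injective envelope $I(S_j)$ are projective-injective in $\Gamma\text{-}\modu$, i.e.\ with $E_j$ injective (respectively projective) as an $A$-module. This yields the required generation and cogeneration of $\Dual C$ by $Q$, completing the proof.
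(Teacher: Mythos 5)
Your reduction to Lemma~\ref{l:introlem} is exactly the paper's starting point, and your preliminary observations (that $\Dual C$ is a basic tilting-cotilting $\Gamma$-module, that the projective-injective $\Gamma$-modules are the $\Hom_A(E,I_k)$, and that $\Dual C\cong\Hom_B(C,\Dual B)=\bigoplus_l\Hom_B(C,I^B_l)$) are all correct. The gap is in the final paragraph: the claim that for a summand $\Hom_B(C,I^B_l)$ with $I^B_l\notin\Add C$ the top consists only of simples with projective-injective projective cover (and the socle only of simples with projective-injective injective envelope) is precisely the statement that needs proving, and you do not prove it --- you gesture at ``the projective resolutions from Lemma~\ref{l:cpd} and the injective copresentations from Lemma~\ref{l:cid} \dots combined with the Auslander-Reiten combinatorics of $\Gamma$.'' Those resolutions describe the top and socle of $c(M)$ as a $B$-module (in terms of which objects $P_i\to 0$ resp.\ $\nu^-I\to 0$ appear), not the top and socle of $\Hom_B(C,I^B_l)$ as a $\Gamma$-module; the tilting functor $\Hom_B(C,-)$ does not simply carry projective covers in $B$-$\modu$ to projective covers in $\Gamma$-$\modu$, so translating one into the other is a real piece of work that you have skipped. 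As stated, the argument is circular: the top/socle assertion is equivalent to the conclusion you are trying to reach.

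The paper takes a different and much more direct route, which avoids any decomposition of $\Dual C$ and any case analysis. It exploits the $(B,\Gamma)$-bimodule structure on $C$ (coming from $\Gamma=\End_B(C)^{op}$): the unit and counit maps of the recollement
\[
\ell e(C)\xrightarrow{\;p\;} c\,e(C)=C\xrightarrow{\;i\;}r\,e(C)
\]
are epi resp.\ mono, and because they are functorial in $C$ they are morphisms of right $\Gamma$-modules. Then one just observes that $re(C)=\Hom_A(eB,E)$ embeds into $\Hom_K(eB,E)\cong E^{\dim eB}$ and $\ell e(C)=Be\otimes_A E$ is a quotient of $Be\otimes_K E\cong E^{\dim Be}$, both in $\Add(E)$ as right $\Gamma$-modules, with $E=e(C)$ projective-injective over $\Gamma$. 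Hence $C$ is generated and cogenerated by $\Add(E)$, and dualizing gives the statement for $\Dual C$. If you want to salvage your approach you would need to actually carry out the analysis of the $\Gamma$-module $\Hom_B(C,I^B_l)$; but it is cleaner to abandon the decomposition and use the bimodule/recollement argument above.
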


\begin{proof}
By Lemma~\ref{l:introlem} it suffices to show that $\Dual C$ is 
generated and cogenerated by projective-injective left $\Gamma$-modules,
or equivalently that $C$ is generated and cogenerated by projective-injective
right $\Gamma$-modules.
By the theory of Auslander algebras (or the Nakayama isomorphism $\Theta_{X,Y}$),
$E = e(C)$ is a projective-injective right $\Gamma$-module. 
We look at the epimorphism $p$ and monomorphism $i$ 
\[ 
\ell e(C) \xrightarrow{p} c e(C) =C \xrightarrow{i} re (C). 
\]
These are maps of left $B$-modules, but they are functorial, so also
maps of right $\Gamma$-modules. By definition of the right adjoint, $re (C) = \Hom_A (e(B), e(C))$ is a $\Gamma$-submodule of $\Hom_K(e(B), e(C))\in \Add (e(C)) =\Add (E)$. By definition of the left adjoint, $\ell e (C) = Be \otimes_A e(C)$, which is a quotient of $Be \otimes_K e(C)\in \Add (E) $ as a $\Gamma $-module. This proves that $C$ is generated and cogenerated by $\Add (E)$.
\end{proof}

The same argument gives another characterization of
projective quotient algebras.

\begin{thm}
If $B$ is an arbitrary finite-dimensional algebra
and $C$ is a basic tilting and cotilting $B$-module 
satisfying the properties in Theorem~\ref{t:kerhomaddcond},
then $B$ is a projective quotient algebra.
\end{thm}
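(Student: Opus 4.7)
The plan is to reduce the statement to Theorem~\ref{t:dct}: I aim to extract from $C$ a representation-finite basic algebra $A$ together with an identification of $\Gamma := \End_B(C)^{\mathrm{op}}$ with the Auslander algebra of $A$, and then to check that $\Dual C$ is the unique basic tilting and cotilting $\Gamma$-module singled out by Lemma~\ref{l:introlem}.

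First I would produce the idempotent. The tilting and cotilting hypothesis on $C$ together with the first condition of Theorem~\ref{t:kerhomaddcond} yields a TTF-triple $(\Gen(C),\,\Ker\Hom_B(C,-),\,\Cogen(C))$ on $B$-$\Modu$. Its middle term is a Serre subcategory closed under arbitrary products and coproducts, hence of the form $\{M\in B\text{-}\Modu : eM=0\}$ for some idempotent $e\in B$; this uses the general fact that an idempotent two-sided ideal in a finite-dimensional algebra is generated by an idempotent, and I expect this matching step to be the main technical point. Setting $A:=eBe$ identifies our recollement with the standard recollement for $e$ and the associated intermediate extension functor with the $c$ of Section~\ref{s:mainrecoll}.

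Next I would verify that $A$ is representation-finite and that $\Gamma$ is its Auslander algebra. By the definitions and Lemma~\ref{IntExt}, the bistable $B$-modules form exactly $\Gen(C)\cap\Cogen(C)$, which by the second condition of Theorem~\ref{t:kerhomaddcond} coincides with $\Add(C)$; they are also the essential image of $c$ on $A$-$\Modu$. Since the basic tilting module $C$ has only finitely many non-isomorphic indecomposable summands and $c$ is fully faithful and preserves indecomposables, $A$-$\modu$ has only finitely many indecomposables, so $A$ is of finite representation type. Since $C$ is itself bistable, $C\cong c(eC)$, and the bijection between indecomposable $A$-modules and indecomposable bistable $B$-modules shows that every indecomposable $A$-module is of the form $eF$ for an indecomposable summand $F$ of $C$; hence $eC$ is an additive generator of $A$-$\modu$, and full faithfulness of $c$ gives $\Gamma\cong\End_A(eC)^{\mathrm{op}}$, the Auslander algebra of $A$.

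Finally I would invoke the same argument used in the last paragraph of the proof of Theorem~\ref{t:dct}. The canonical factorization $\ell e(C)\twoheadrightarrow C\hookrightarrow re(C)$ is a composition of maps that are simultaneously left $B$-linear and right $\Gamma$-linear; the outer terms embed into, respectively are quotients of, $\Hom_K(eB,eC)$ and $Be\otimes_K eC$, each of which lies in $\Add(eC)=\Add(E)$ as a right $\Gamma$-module. Thus $C$ is generated and cogenerated by $\Add(E)$, so after dualising $\Dual C$ is generated and cogenerated by projective-injective left $\Gamma$-modules. Lemma~\ref{l:introlem} then forces $\Dual C\cong T$, and Theorem~\ref{t:dct} yields $B\cong\End_\Gamma(T)^{\mathrm{op}}\cong B_A$.
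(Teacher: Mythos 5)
Your proposal is correct and follows essentially the same route as the paper: extract the idempotent $e$ from the strong TTF triple (the paper cites a result of Psaroudakis--Vitória where you argue more directly via idempotent ideals in a finite-dimensional algebra, both valid), use the equivalence $c\colon A\text{-}\Modu \to \Add(C)$ to see that $A=eBe$ is representation-finite with Auslander generator $e(C)$ and Auslander algebra $\Gamma$, then re-run the argument from Theorem~\ref{t:dct} to show $C$ is generated and cogenerated by projective-injective right $\Gamma$-modules, forcing $\Dual C\cong T$. The only small imprecision is at the end: the isomorphism $B\cong\End_\Gamma(T)^{\mathrm{op}}$ comes from the Brenner--Butler double-centralizer property of the tilting module $C$ (not from Theorem~\ref{t:dct} itself); Theorem~\ref{t:dct} is then what identifies $\End_\Gamma(T)^{\mathrm{op}}$ with the projective quotient algebra $B_A$.
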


\begin{proof}
The condition on the kernels gives a strong TTF triple 
\[
(\mcX = \Gen (C) , \mcY = \Ker \Hom_B (C, - ) = \Ker \Hom_B (-, C), \mcZ = \Cogen (C)) 
\]
in $B$-$\Modu$. 
The corresponding recollement is given by an idempotent element $e\in B$ by \cite[Corollary 5.5]{PV}. 
Letting $A = eBe$, the intermediate extension functor gives an equivalence 
\[ 
c\colon \text{$A$-$\Modu$} \to \Gen (C) \cap \Cogen (C) = \Add (C)
\]
from which it follows that $A$ is representation-finite and $E=e(C)$ is an Auslander generator. This implies that
\[ 
\Gamma = \End_B (C)^{op} = \End_A (E)^{op} =\End_A ( e(C))^{op}
\]
is the Auslander algebra of $A$.
Now $C$ is generated and cogenerated by projective-injective right $\Gamma$-modules as in the proof of Theorem~\ref{t:dct}. The result follows.  
\end{proof}

Now, let $\Gamma$ be the Auslander algebra and $\epsilon \in \Gamma = \End_A(E)^{op}$ 
be the idempotent element corresponding to the projection on the direct sum of all projective $A$-modules 
(which is isomorphic to $A$, so $\epsilon \Gamma \epsilon \cong \End_A(A)^{op} \cong A$). 
We look at the map given by left multiplication with this 
idempotent,
\[ 
\epsilon\colon \text{$\Gamma$-$\Modu$} \to \text{$A$-$\Modu$}. 
\]
We remark that, $\Ker \epsilon $ is equivalent to 
$\left( \underline{\text{$A$-$\modu$}}\right)^{\wedge } $, which is also equivalent to 
$\Ker \eprime \cong \HPKmod$ in our main recollement. It means we can find two recollements with the same outer terms and modules over the two tilted algebras $B$ and $\Gamma$ in the middle. 
The next result shows that $T$ appears naturally in the recollement
given by the idempotent $\epsilon \in \Gamma$.

\begin{pro} 
One has $\Ker \epsilon =\Ker \Hom_{\Gamma} (- ,T) $.
In particular, we can find $\Add (T)$ as the Ext-injectives in the corresponding torsionfree class to $\Ker \epsilon$.  
\end{pro}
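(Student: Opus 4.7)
The plan is to use Theorem~\ref{t:dct}, which gives $T \cong \Dual C$, and reduce the vanishing of $\Hom_\Gamma(-,T)$ to a condition about $C \otimes_\Gamma -$. Viewing $C$ as a right $\Gamma$-module via $\Gamma = \End_B(C)^{op}$, the tensor-hom adjunction provides a natural isomorphism
\[
\Hom_\Gamma (M, T) \;=\; \Hom_\Gamma (M, \Dual C) \;\cong\; \Dual (C \otimes_\Gamma M)
\]
for every left $\Gamma$-module $M$. Since $\Dual$ is faithful, showing $\Ker \epsilon = \Ker \Hom_\Gamma(-, T)$ is equivalent to showing $C \otimes_\Gamma M = 0$ if and only if $\epsilon M = 0$.

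Next I would connect $C \otimes_\Gamma -$ with $c \circ \epsilon$. The indecomposable projective left $\Gamma$-modules are $P_Y = \Hom_A(E,Y)$ for $Y$ indecomposable in $A$-$\modu$, and a direct calculation gives $\epsilon P_Y \cong Y$, while Theorem~\ref{t:cformula} gives $C \otimes_\Gamma P_Y = c(Y)$. Taking direct sums, $C \otimes_\Gamma P \cong c(\epsilon P)$ for every projective $\Gamma$-module $P$. Choosing a projective presentation $P_1 \to P_0 \to M \to 0$, the exactness of $\epsilon$ and right exactness of $C \otimes_\Gamma -$ yield a commutative diagram whose lower row is obtained by applying $c$ to the exact sequence $\epsilon P_1 \to \epsilon P_0 \to \epsilon M \to 0$, producing a natural epimorphism $C \otimes_\Gamma M \twoheadrightarrow c(\epsilon M)$.

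Both directions now follow cleanly. If $\epsilon M = 0$, then $\epsilon P_1 \to \epsilon P_0$ is surjective; applying $c$, which preserves epimorphisms by Lemma~\ref{IntExt}(4), makes $c(\epsilon P_1) \to c(\epsilon P_0)$ surjective, so its cokernel $C \otimes_\Gamma M$ vanishes. Conversely, if $C \otimes_\Gamma M = 0$ then $c(\epsilon M) = 0$, and because $c$ is fully faithful this forces $\epsilon M = 0$. Combining with the adjunction gives the desired equality, and the ``in particular'' claim follows from standard cotilting theory: $T$ is cotilting by Lemma~\ref{gldim2}, so $(\Ker \Hom_\Gamma(-,T), \Cogen T)$ is a torsion pair whose class of Ext-injective objects in the torsion-free part is precisely $\Add T$. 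The main subtlety is the non-exactness of $c$, which means $C \otimes_\Gamma M \to c(\epsilon M)$ is only an epimorphism rather than an isomorphism; the argument still works because $c$'s preservation of epimorphisms, combined with its faithfulness, is exactly what is needed to propagate vanishing in both directions.
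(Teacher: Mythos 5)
Your proof is correct, but it takes a genuinely different route from the paper. The paper establishes that $\Ker\epsilon$, $\Ker\Hom_B(C,-)$ and $\Ker\Hom_\Gamma(-,T)$ are equivalent as categories (via the preceding remark and the Brenner--Butler theorem), observes that by this equivalence it suffices to verify a single inclusion, and then checks that inclusion directly: if $\Hom_\Gamma(X,T)=0$, then since every projective-injective indecomposable is a summand of $T$, one gets $\Hom_\Gamma(X, I(a))=0$ for each simple $\Gamma$-module $a$ supported on a projective $A$-module, forcing $\epsilon X=0$. Your argument instead avoids the Brenner--Butler step entirely: you use the adjunction $\Hom_\Gamma(M,\Dual C)\cong \Dual(C\otimes_\Gamma M)$ to recast the condition as $C\otimes_\Gamma M=0$, then exploit the explicit formula $c(X)=C\otimes_\Gamma\Hom_A(E,X)$ from Theorem~\ref{t:cformula} to identify $C\otimes_\Gamma-$ with $c\circ\epsilon$ on projectives, and finally use projective presentations together with the facts that $\epsilon$ is exact, $c$ preserves epimorphisms (Lemma~\ref{IntExt}(4)), and $e\circ c\cong\id$ to run both implications. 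Your route is more computational and self-contained; it proves both inclusions directly rather than leaning on the abstract equivalence plus a finiteness argument. One small citation slip: for ``$T$ is cotilting'' the relevant statement is Lemma~\ref{l:introlem} (which concerns $T$ as a $\Gamma$-module), not Lemma~\ref{gldim2} (which concerns $C$ as a $B$-module); the fact is available either way.
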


\begin{proof}
Using the previous remark and the Brenner Butler theorem, we know that $\Ker \epsilon \cong \Ker \eprime =\Ker \Hom_B(C,-)\cong \Ker \Hom_{\Gamma} (- ,T)$. Therefore, 
it is enough to prove one inclusion. So, let $X$ be a $\Gamma$-module with 
$\Hom_{\Gamma }(X, T)=0$. Since $T$ is a tilting module, it has as a direct summand every finite dimensional projective-injective module. So in particular, every $I(a)$ with $a$ being a simple $\Gamma$-module corresponding to a projective $A$-module is a summand of $\Gamma$. Therefore, $\Hom_{\Gamma}(X, I(a))=0$ for those $a$. But this implies 
$\dim_a X=0$ for those $a$ and therefore $\epsilon X=0$. 
\end{proof}

\section{Representation schemes and varieties}
In this section we prove some general results about representation varieties and schemes for algebras.
We prove a version of the First Fundamental Theorem for quivers, and then we apply it to finite-dimensional algebras. Then we prove a criterion for smoothness in representation schemes. These results are used in later sections.

We begin by recalling the basic definitions.
Let $K$ be an algebraically closed field, let $A$ be a finitely generated $K$-algebra
and let $e_1,\dots,e_n$ be a complete set of orthogonal idempotents in $A$.
For $\dd\in \N_0^n$, there is an affine scheme $\RRdA$ of $\dd$-dimensional representations 
of $A$.
For a commutative $K$-algebra $R$, its $R$-points are the $K$-algebra homomorphisms 
$\theta:A\to M_d(R)$
where $d = \sum_{i=1}^n d_i$, with the property that $\theta(e_i) = \epsilon_i$ for all $i$, 
where $\epsilon_i$ is the idempotent in
$M_d(K)$ which, under the isomorphism
\[
M_d(K) \cong \End_K(\bigoplus_{i=1}^n K^{d_i}),
\]
corresponds to projection onto the summand $K^{d_i}$.

The $K$-points of $\RRdA$ are $K$-algebra maps $\theta:A\to M_d(K)$, and 
the corresponding $A$-module is $X = K^d$ with the action of $a\in A$ given by left
multiplication by $\theta(a)$. The condition that $\theta(e_i)=\epsilon_i$ ensures
that $e_i X = K^{d_i}$ under the identification $K^d = \bigoplus_{i=1}^n K^{d_i}$.
We write $\RdA$ for the affine variety of $\dd$-dimensional representations of $A$. Its points
are the $K$-points of $\RRdA$, ignoring any non-reduced structure on the scheme.
The algebraic group $\Gd$ acts on by conjugation, and its orbits are the isomorphism
classes of representations of $A$ of dimension $\dd$, that is, $A$-modules $X$ with $\dim e_i X = \alpha_i$ for all $i$.

Suppose that $K$ has characteristic zero. For an algebraic group $G$ 
acting on an affine variety $X$, we denote by $X\dbslash G$ the
categorical quotient, the variety with coordinate ring $K[X]^G$, where $K[X]$ is the
coordinate ring of $X$.

\subsection*{First Fundamental Theorem for quivers with partial group action}
\label{deframedquiver}
Let $Q_B$ be a finite quiver with vertices $1,\ldots , n, n+1, \ldots , m$ ($n \le m$)
and let $(\dd, \rr) \in \N_0^n  \times \N_0^{m-n} $ be a dimension vector for $Q$. 
Let $Q^{>n}$ be the full subquiver of $Q_B$ supported on the vertices $n+1, \ldots , m$. 
We call a non-trivial path in $Q_B$ primitive if starts and ends at a vertex in the range $1, \ldots , n$ but does not otherwise pass through any such vertex. 
Let $e:= \sum_{i=1}^n e_i$, and observe that $e KQ_B e$ is isomorphic to the path algebra of a quiver $Q_A'$ with vertices $1, \ldots , n$ whose arrows are the primitive paths. Note that $Q_A'$
may be infinite, but if we only consider primitive paths of length at most $N^2$, where 
$N= 1+\sum_{i=n+1}^m r_i$, we obtain a finite subquiver $Q_A$ of $Q_A'$.

\begin{lem}
\label{l:quivedeframe}
The natural map
$
\RdrQB \dbslash \Glr \to \RdQA \times \left(\RrQn \dbslash \Glr \right)
$
 is a closed immersion. 
\end{lem}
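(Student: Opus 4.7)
The plan is to reduce the closed immersion claim to a surjectivity statement for coordinate rings, and then to verify surjectivity using invariant-theoretic generators together with a Cayley--Hamilton style reduction.

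Since all schemes involved are affine and $\Glr$ is reductive, the categorical quotients have coordinate rings equal to the invariant rings:
\[
K[\RdrQB\dbslash\Glr]=K[\RdrQB]^{\Glr},\qquad K[\RdQA\times(\RrQn\dbslash\Glr)]=K[\RdQA]\otimes K[\RrQn]^{\Glr}.
\]
The natural map is induced by the ``primitive paths'' morphism $\pi_A\colon\RdrQB\to\RdQA$, which sends a $Q_B$-representation to the $Q_A$-representation whose arrows evaluate to the compositions along primitive paths of length at most $N^2$, and by the restriction $\pi_W\colon\RdrQB\to\RrQn$. The first is $\Glr$-invariant because base changes on intermediate unfrozen vertices cancel along any primitive path, and the second is $\Glr$-equivariant; so the combined map descends to the quotient. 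The lemma thus reduces to showing that the induced ring homomorphism $K[\RdQA]\otimes K[\RrQn]^{\Glr}\to K[\RdrQB]^{\Glr}$ is surjective.

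Next I would invoke a First Fundamental Theorem for quiver representations with partial reductive action (the variant of Le~Bruyn--Procesi extended to the partial case in work of Derksen--Weyman and Domokos--Zubkov): the invariant ring $K[\RdrQB]^{\Glr}$ is generated by (a) matrix entries of linear maps coming from oriented paths in $Q_B$ with both endpoints in the frozen range $\{1,\dots,n\}$, together with (b) traces of oriented cycles in $Q_B$. Every path of type (a) factors uniquely as a concatenation of primitive paths, so its matrix entries are polynomials in matrix entries of primitive paths. Every cycle of type (b) either lies entirely in $Q^{>n}$, in which case its trace is already in $K[\RrQn]^{\Glr}$, or visits some frozen vertex; in the latter case, cyclic invariance of the trace lets us start the cycle at a frozen vertex, so it becomes the trace of a product of primitive paths, again reducible to entries of primitive paths.

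The main obstacle is then to show that matrix entries of primitive paths of length exceeding $N^2$ can be expressed as polynomials in matrix entries of primitive paths of length at most $N^2$ and elements of $K[\RrQn]^{\Glr}$. I would factor such a primitive path as $\alpha_{\mathrm{out}}\circ\gamma\circ\alpha_{\mathrm{in}}$, where $\alpha_{\mathrm{in}}$ and $\alpha_{\mathrm{out}}$ are the boundary arrows and $\gamma$ is a word in the arrows of $Q^{>n}$ acting on $W=\bigoplus_{i=n+1}^m W_i$, which has total dimension $r=N-1$. By a noncommutative Cayley--Hamilton / Procesi--Razmyslov-type trace identity applied to the subalgebra of $\End(W)$ generated by the arrow matrices, any such $\gamma$ of length exceeding $r^2$ equals a $K[\RrQn]^{\Glr}$-linear combination of words of length at most $r^2$. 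Pre- and post-composing with $\alpha_{\mathrm{in}}$ and $\alpha_{\mathrm{out}}$ (which contribute the two boundary arrows) yields the desired expression for the matrix entries of the primitive path, using that $N^2=(r+1)^2\ge r^2+2$. Combining these reductions shows that every generator of $K[\RdrQB]^{\Glr}$ lies in the image of $K[\RdQA]\otimes K[\RrQn]^{\Glr}$, so the ring map is surjective and the induced map on categorical quotients is a closed immersion.
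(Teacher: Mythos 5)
Your argument is correct in outline, but it takes a genuinely different route from the paper's. The paper's proof proceeds by \emph{deframing}: it builds an auxiliary quiver $Q^\infty$ with a single extra vertex $\infty$ of dimension $1$ that absorbs all the frozen vertices (arrows from/to/between frozen vertices become $d_i$ or $d_id_j$ arrows to/from/at $\infty$), establishes a $\Glr$-equivariant isomorphism $\RdrQB\cong\RrQinfty$, and then applies the classical Le Bruyn--Procesi First Fundamental Theorem to $Q^\infty$, where \emph{every} vertex carries a group action. The degree bound $N^2$ is read off directly from that theorem, and since $\infty$ has dimension $1$, traces of oriented cycles through $\infty$ are literally scalar matrix entries of primitive paths in $Q_B$ of the same length, which is exactly how the $Q_A$-coordinates appear. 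You instead invoke a ``partial First Fundamental Theorem'' (generators are traces of cycles plus matrix entries of paths between frozen vertices) as a black box, and then obtain the length bound by hand via a Razmyslov--Procesi/Nagata--Higman trace-identity reduction of the interior word $\gamma$ in $Q^{>n}$. This is morally equivalent --- both routes ultimately rest on the same Nagata--Higman-type nilpotence bound $r^2$ --- but the trade-offs differ: the paper's deframing lets it cite a single, cleanly formulated theorem and makes the appearance of primitive paths transparent, while your version is more explicit about where the degree bound comes from but asks the reader to accept a ``partial FFT'' that does not have a standard crisp citation (the usual way to prove it \emph{is} the deframing trick the paper uses), and an iterated trace-identity reduction whose details (in particular, that the replacement words in the reduction of $\gamma$ necessarily have the same source and target as $\gamma$, which follows from the block structure) you should spell out. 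Neither of these is a genuine gap, but if you wanted a self-contained proof you would essentially find yourself re-deriving the deframing argument.
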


We use the method of deframing, compare 
\cite[\S 1]{CBmm} and the proof of \cite[Proposition 3.1]{CFR}. 

\begin{dfn} \label{deframed} Let $Q^{\infty}$ be the quiver with vertices $\infty , n+1, n+2, \ldots , m$ and arrows as follows. For  each arrow $a\colon i\to j$ in $Q_B$ we have 
\begin{itemize}
\item[(1)] If $i,j \in \{ n+1, \ldots ,m \}$, then associated to $a$ there is one arrow $i\to j$ in $Q^{>n}$. 

\item[(2)] If $i\in \{1, \ldots ,n\} $, $j \in \{ n+1, \ldots ,m \}$ then associated to $a$ there are $d_i$ arrows $\infty \to j$ in $Q^\infty $.

\item[(3)] If $i \in \{ n+1, \ldots ,m \}$, $j\in \{1, \ldots , n\} $ then 
associated to $a$ there are $d_j$ arrows $i \to \infty $ in $Q^\infty $.

\item[(4)] If $i, j \in \{1, \ldots , n\}$, then associated to $a$ there are $d_id_j$ loops at $\infty$.   
\end{itemize}
\end{dfn}

\begin{proof}[Proof of  Lemma \ref{l:quivedeframe}]
By breaking matrices into their rows and columns it is easy to see that there is a $\Glr$-equivariant isomorphism of varieties
\[ 
\RdrQB \to \RrQinfty.
\]
Moreover, the diagonal copy of the multiplicative group $K^*$ in 
$\Gl_{(1,\rr)} \cong K^* \times \Gl_{\rr}$ acts trivially on $\RrQinfty$,
so 
\[
\RrQinfty \dbslash \Gl_{(1, \rr)}\cong \RrQinfty \dbslash \Glr.
\]
We have to prove that the map on the coordinate rings 
\[ 
\varphi\colon K[ \RdQA ]\otimes_K K[ \RrQn ]^{\Gl_{(1, \rr )}} \to K[\RrQinfty ]^{\Glr}
\]
is surjective. By the First Fundamental Theorem \cite{LP} the ring $K[\RrQinfty ]^{\Gl_{(1, \rr )}}$ is generated by traces along oriented cycles in $Q^\infty$ of length at most $N^2$. Moreover, since the dimension vector at the vertex $\infty $ is equal to $1$, we can reduce to oriented cycles of 
the following two types 

(i) An oriented cycle in passing only through the vertices $n+ 1, \ldots , m$. This invariant is the image under $\varphi $ of the corresponding trace invariant for $Q^{>n}$.  

(ii) An oriented cycle of length at most $N^2$ starting and ending at vertex $\infty $ and not otherwise passing through $\infty $. Each primitive path of length at most $N^2$ from $i$ to $j$ in 
$Q_B$ gives rise to $d_i d_j$ oriented cycles of this type and all arise this way. 
Such a primitive path correspond to an arrow from $i$ to $j$ in $Q_A$ and therefore to a matrix of size $d_j \times d_i$ in $\RdQA$. The maps $\RdQA\to K$ 
picking out the entries of this matrix are sent by $\varphi $ to the required trace invariant. 

Thus each generator of $K[\RrQinfty ]^{\Glr}$ is in the image of $\varphi$, so $\varphi$ is onto.
\end{proof}

\subsection*{First Fundamental Theorem for finite dimensional algebras with an idempotent}
Let $K$ be an algebraically closed field of characteristic zero.
Let $B$ be a basic finite-dimensional algebra and let 
$e_1, \ldots , e_n, e_{n+1}, \ldots , e_m$ ($n\le m$) be a 
complete set of primitive orthogonal idempotents in $B$.
Let $e=\sum_{i=1}^n e_i$ and let $A = eBe$.
For any dimension vector $(\dd, \rr) \in \N_0^m$ we have a natural restriction map
$e\colon \RdrB \to  \RdA$ given by left multiplication with $e$, and it induces a map
$\RdrB \dbslash \Glr \to \RdA$.

\begin{lem}\label{FFT} 
The map $\RdrB \dbslash \Glr \to \RdA$ is a closed immersion.
\end{lem}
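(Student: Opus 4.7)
My plan is to prove the map is a closed immersion by showing that the induced map on coordinate rings $K[\RdA] \to K[\RdrB]^{\Glr}$ is surjective. The strategy is to reduce to the quiver case via Lemma~\ref{l:quivedeframe} by fixing an admissible presentation $B = KQ_B/I_B$. Under this presentation $\RdrB$ sits inside $\RdrQB$ as a closed $\Glr$-stable subscheme cut out by the relations in $I_B$. Since $K$ has characteristic zero, $\Glr$ is linearly reductive, so by the Reynolds operator the induced map on invariants $K[\RdrQB]^{\Glr} \twoheadrightarrow K[\RdrB]^{\Glr}$ is surjective. Lemma~\ref{l:quivedeframe} then tells us that the domain is generated as a $K$-algebra by the images of $K[\RdQA]$ and $K[\RrQn]^{\Glr}$.

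I would then check that the image of $K[\RdQA]$ in $K[\RdrB]^{\Glr}$ factors through $K[\RdA]$: the generators, being matrix entries of arrows in $Q_A$ (i.e.\ primitive paths in $Q_B$ of length at most $N^2$), map to the matrix entries of the corresponding elements of $A = eBe$ acting on the restricted $A$-module $eM$; these are precisely the coordinate functions on $\RdA$ pulled back along the restriction map $\RdrB \to \RdA$. Thus, provided the contribution from $K[\RrQn]^{\Glr}$ reduces to scalars, surjectivity of $K[\RdA] \to K[\RdrB]^{\Glr}$ follows.

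The main obstacle is showing that the image of $K[\RrQn]^{\Glr}$ in $K[\RdrB]^{\Glr}$ consists only of scalars, so that the extra factor from Lemma~\ref{l:quivedeframe} collapses when passing from the quiver case to the algebra. By the First Fundamental Theorem for quivers, $K[\RrQn]^{\Glr}$ is generated by trace functions of oriented cycles in $Q^{>n}$. Trivial cycles (vertex idempotents $e_v$) contribute only the constants $r_v$. For any non-trivial cycle $c$, the path has positive length in $Q_B$ and so lies in $\mathrm{rad}(KQ_B)$; by admissibility of $I_B$ its image in $B$ lies in $\mathrm{rad}(B)$, which is nilpotent since $B$ is finite-dimensional. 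Hence $c$ acts nilpotently on any finite-dimensional $B$-module, and its trace vanishes identically on $\RdrB$. Combining these observations, $K[\RdrB]^{\Glr}$ is generated by the image of $K[\RdA]$, as required.
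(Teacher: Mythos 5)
Your proof is correct and follows essentially the same route as the paper: fix an admissible presentation $B=KQ_B/I$, reduce to Lemma~\ref{l:quivedeframe} via the Reynolds operator, and then show the extra $\RrQn$-factor collapses. Where the paper does this last step by factoring through $\textrm{R}_{fBf}(\rr)\dbslash\Glr$ and observing it is a point because $fBf$ is basic, you argue directly on invariants that all positive-length cycle traces vanish on $\RdrB$ since such cycles land in the nilpotent radical of $B$---the same fact, expressed on coordinate rings rather than on the variety.
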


Note that here we must use the reduced scheme structure on $\RdA$ and $\RdrB$.

\begin{proof}
%Since this is a morphism of affine schemes, it is enough to see that 
%the natural map on the coordinate rings 
%$ e^*\colon {\rm K} [\RdA ] \to {\rm K} [\RdrB]$ has image in the subalgebra of $\Glr$-invariant elements  (which follows since the map in $\Glr$-equivariant) and the map 
%\[
%e^*\colon {\rm K}[ \RdA] \to {\rm K}[ \RdrB]^{\Glr}
%\]
%is surjective.

We write $B=KQ_B/I$ with $Q_B$ a quiver with vertices $1, \ldots , m$ corresponding to the idempotents $e_1, \ldots , e_m$ in $B$ and $I$ an admissible ideal. 
Then there is a closed immersion $\RdrB \to \RdrQB$.
Now we find a commutative diagram
\[
\xymatrix{
\RdrQB \dbslash \Glr \ar[r] & \RdQA \times \left( \RrQn \dbslash \Glr \right) \\
\RdrB \dbslash \Glr \ar[r]\ar[u]& \RdA \times \left(\textrm{R}_{fBf}(\rr)\dbslash \Glr\right)  \ar[u].
}
\]
where $f= \sum_{i=n+1} ^m e_i$ and the right hand map is induced by the algebra homomorphisms 
$KQ_A \to A$ and $KQ^{>n}\to fBf$.
By Lemma~\ref{l:quivedeframe} the top map is a closed immersion, and the Reynolds
operator ensures that the left hand vertical map is a closed immersion. It follows that the 
bottom map is a closed immersion.
Now, the variety  $\textrm{R}_{fBf}(\rr)\dbslash \Glr $ classifies semi-simple 
$fBf$-modules of dimension vector $\rr$. Since $B$ is basic, so is $fBf$ and therefore this quotient variety is just a point.
% (Note that as a scheme this could have a non-reduced structure but we are neglecting this.) 
The claim follows. 
\end{proof}

%---------- --------------------------------------------------------------------
%..............Section: Smooth points of module schemes
%-------------------------------------------------------------------------------

\subsection*{Smooth points of module schemes}
Let $A$ be a finitely generated algebra over an algebraically closed field $K$,
let $e_1,\dots,e_n$ be a complete set of orthogonal idempotents in $A$ and let
$\dd\in\N^n$ be a dimension vector.
(Note that we allow the possibility that $n=1$, so we just work with the total dimension of a representation).
Recall that the closed points of the scheme $\RRdA$ are identified
with its $K$-points, so correspond to $A$-module structures $X$ on $\bigoplus_{i=1}^n K^{d_i}$.

\begin{lem} 
\label{Ext2smooth}
If $\Ext^2_A(X,X)=0$, then $X$ is a smooth point of $\RRdA$.
\end{lem}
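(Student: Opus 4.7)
The plan is to combine Krull's principal ideal theorem with a direct computation of the tangent space via a quiver presentation of $A$, using the well-known fact that $\Ext^2_A(X,X)$ measures the ``excess'' of $\dim T_X\RRdA$ over the Krull codimension bound. If this excess is zero, the codimension bound is tight and matches the tangent space dimension, forcing smoothness.

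I would first present $A = KQ/J$ with $Q$ a finite quiver having vertex set $\{1,\dots,n\}$ corresponding to the $e_i$, and choose finitely many relations $\rho_1,\dots,\rho_s\in J$ whose matrix entries generate the defining ideal of $\RRdA$ inside the smooth affine space $R:=\Rep_Q(\dd) = \prod_{a\in Q_1}\Hom_K(K^{d_{t(a)}},K^{d_{h(a)}})$; such a finite set exists by Noetherianity of $K[R]$. The relations assemble into a morphism $\mu\colon R\to S:=\bigoplus_i\Hom_K(K^{d_{t(\rho_i)}},K^{d_{h(\rho_i)}})$ with $\RRdA = \mu^{-1}(0)$ scheme-theoretically. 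Since $\RRdA$ is cut out in $R$ by the $\dim S$ scalar equations given by the matrix entries of $\mu$, Krull's principal ideal theorem yields $\dim_X\RRdA \geq \dim R - \dim S$.

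Next I would identify the complex
\[
\mathfrak{gl}_{\dd}\xrightarrow{d^0} R\xrightarrow{d\mu_X} S
\]
(with $d^0$ the differential at $X$ of the conjugation action of $\Gd$) with the low-degree portion of a standard complex computing $\Ext^*_A(X,X)$, so that its cohomology in degrees $0,1,2$ is $\End_A(X)$, $\Ext^1_A(X,X)$ and $\Ext^2_A(X,X)$ respectively — the last identification holds when the $\rho_i$ are chosen as a minimal generating set, following Bongartz. Since $T_X\RRdA = \ker(d\mu_X)$, rank--nullity gives $\dim T_X\RRdA = \dim R - \dim S + \dim\Ext^2_A(X,X)$. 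Under the hypothesis $\Ext^2_A(X,X)=0$ this becomes $\dim R - \dim S \leq \dim_X\RRdA$, and the general inequality $\dim_X\RRdA\leq\dim T_X\RRdA$ then forces equality throughout, so $X$ is a smooth point.

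The main obstacle is the cohomology identification $\coKer(d\mu_X)\cong\Ext^2_A(X,X)$, which requires minimality of the chosen relations together with a careful comparison against a projective resolution of $X$. If this is awkward in the general finitely generated setting, an equivalent approach is purely deformation-theoretic: verify by a bar-resolution computation that $\Ext^2_A(X,X)$ is the obstruction space for the pro-representable functor of deformations of $X$ as an $A$-module with fixed decomposition $X = \bigoplus_i e_iX$, so its vanishing forces $\hat{\mathcal{O}}_{\RRdA,X}$ to be a formal power series ring, again giving smoothness.
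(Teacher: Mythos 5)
The main approach has a genuine gap: the claimed identification $\coKer(d\mu_X)\cong\Ext^2_A(X,X)$ is false, even for a finite-dimensional algebra presented with a minimal set of relations. Consider $A=K[x]/(x^2)$ (one vertex, one loop, the single minimal relation $x^2$) and $X=A$ the regular module, so $d=2$. Since $X$ is projective, $\Ext^2_A(X,X)=0$, yet $d\mu_X\colon M_2(K)\to M_2(K)$ is $N\mapsto XN+NX$ with $X$ a nilpotent Jordan block; this map has rank $2$, so $\dim\coKer(d\mu_X)=2\ne 0$. What is actually true is only an inclusion $\Ext^2_A(X,X)\hookrightarrow\coKer(d\mu_X)$ — the truncated complex $\mathfrak{gl}_{\dd}\to R\to S$ is the degree-$\le 2$ part of a Hochschild-type complex, and its terminal cohomology group overestimates $\Ext^2$ because you have thrown away the outgoing differential $P_2^*\to P_3^*$. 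This inequality points the wrong way for your argument. Correspondingly, the Krull bound $\dim_X\RRdA\ge\dim R-\dim S$ applied to the global relation map is simply too weak: in the example it gives $\dim_X\ge 0$ whereas the actual local dimension is $2$. So the rank--nullity count does not close.

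Your fallback sketch is the right route, and it is essentially the paper's argument. The paper verifies the infinitesimal lifting criterion directly: given a square-zero extension $R\to R/I$ with residue field $K$, one uses $H^2(A,\End_K(X))\cong\Ext^2_A(X,X)=0$ to split the pulled-back algebra extension of $A$ by $M_d(I)$, producing a lift of the homomorphism $A\to M_d(R/I)$ to $M_d(R)$. What your sketch leaves out — and what takes up roughly half the paper's proof — is arranging the lift to respect the idempotent constraints $\psi(e_i)=\epsilon_i$; the paper does this by observing that the defect is a derivation from the separable algebra $S=K^n$ into $M_d(I)$, hence inner, so the lift can be conjugated into place. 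The ``number of local equations'' reformulation you mention at the end ($\widehat{\mcO}_{\RRdA,X}$ cut out by $\dim\Ext^2$ elements inside a power series ring on $T_X\RRdA$) is a legitimate packaging of the same obstruction theory and does give the inequality $\dim_X\RRdA\ge\dim T_X\RRdA-\dim\Ext^2_A(X,X)$, which is the correct replacement for the faulty global Krull estimate — but note that this is a \emph{local} count via Schlessinger theory, not a count of the globally chosen relations $\rho_1,\dots,\rho_s$.
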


We use the following version of the lifting criterion for smoothness, see \cite[Tag 02HW]{stacks-project}. 

%
%\begin{lem}
%Suppose $A$ is a finitely generated commutative k-algebra and $\m$ a maximal ideal in $A$.
%Then $\Spec A$ is smooth at $\m$ if and only if for any finite dimensional commutative local $K$-algebra $R$ with maximal ideal $\n$, 
%and any ideal $I$ in $R$ with $I \cong R/\n$ as a $R$-module and $I^2=0$, 
%any $k$-algebra map $A\to R/I$, such that the inverse image of the maximal ideal $\n/I$ is $\m$,
%lifts to $K$-algebra map $A\to R$. 
%\end{lem}

\begin{lem}
Suppose $\Lambda$ is a finitely generated commutative $K$-algebra and $\m$ a maximal ideal in $\Lambda$.
The following are equivalent.
\begin{itemize}
\item[(1)] The scheme $\Spec \Lambda$ is smooth at $\m$.  
\item[(2)] Any $K$-algebra map $\Lambda\to R/I$ with 
\[
(*) \left\{
\begin{aligned}
&\text{$R$ is a finite dimensional commutative local $K$-algebra with}
\\[-4pt]
&\text{maximal ideal $\n$ and $I\subset R$ is an ideal with $I^2=0$ and }
\\[-2pt]
&\text{$I\cong R/\n$ as an $R$-module, and the preimage of $\n/I$ is $\m$,}
\end{aligned}
\right.
\] 
lifts to a $K$-algebra map $\Lambda\to R$. 
\end{itemize}
\end{lem}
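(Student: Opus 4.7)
The proof plan is to establish the equivalence by reducing to the standard Grothendieck infinitesimal lifting criterion for smoothness. The direction $(1)\Rightarrow(2)$ is a direct application of formal smoothness of power series algebras, while $(2)\Rightarrow(1)$ requires boosting the restricted lifting property in $(2)$ to the full infinitesimal lifting criterion.

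For $(1)\Rightarrow(2)$: Since $K$ is algebraically closed and $\Lambda$ is a finitely generated $K$-algebra smooth at $\m$, the completion $\widehat{\Lambda_{\m}}$ is isomorphic to a formal power series algebra $K[[x_1,\ldots,x_d]]$. Given a $K$-algebra map $\varphi\colon \Lambda\to R/I$ satisfying $(*)$, it factors through $\widehat{\Lambda_{\m}}$ because $R/I$ is a finite-dimensional local $K$-algebra whose maximal ideal pulls back to $\m$. A power series algebra is formally smooth over $K$, so the images of the $x_i$ (which lie in $\n/I$) can be lifted arbitrarily to elements of $\n$, producing a $K$-algebra map $\widehat{\Lambda_{\m}}\to R$ which, composed with the canonical map $\Lambda\to \widehat{\Lambda_{\m}}$, lifts $\varphi$.

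For $(2)\Rightarrow(1)$: I plan to prove by induction on $\dim_K I$ that $(2)$ implies the \emph{general} infinitesimal lifting criterion, namely that every $K$-algebra map $\Lambda\to R/I$ with $R$ finite-dimensional commutative local, $I^2=0$, and preimage-of-maximal-ideal equal to $\m$, lifts to $\Lambda\to R$ (no hypothesis on $I\cong R/\n$). The case $\dim_K I\le 1$ is either trivial or exactly $(*)$. For the inductive step, choose a simple $R$-submodule $J\subset I$; such $J$ exists because $R$ is Artinian, and $J\cong R/\n=K$ since $R$ is local with residue field $K$. Factor the surjection $R\twoheadrightarrow R/I$ as $R\twoheadrightarrow R/J\twoheadrightarrow R/I$. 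Since $(I/J)^2=0$ and $\dim_K(I/J)<\dim_K I$, the inductive hypothesis applied to the pair $(R/J, I/J)$ produces a lift $\Lambda\to R/J$ of the given map $\Lambda\to R/I$. The remaining step from $\Lambda\to R/J$ to $\Lambda\to R$ has kernel $J\cong R/\n$ with $J^2=0$, which is precisely the setup of $(*)$, so $(2)$ applies. Composing the two lifts gives the desired map $\Lambda\to R$, establishing the full lifting criterion.

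The main obstacle is to close the argument in $(2)\Rightarrow(1)$ by invoking the classical theorem that for a finitely generated $K$-algebra at a closed point with residue field $K$, the general infinitesimal lifting property is equivalent to smoothness of the local ring $\Lambda_{\m}$; this is the input that makes the reduction actually deliver (1). The inductive reduction from general $I^2=0$ to the minimal case $I\cong R/\n$ of $(*)$ is routine once one observes that any nonzero ideal of an Artinian local ring contains a simple submodule, which here is forced to be one-dimensional over $K$.
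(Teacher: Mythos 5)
The paper does not prove this lemma; it is stated with a pointer to the Stacks project and used as a black box, so there is no in-paper argument to compare against. Your proof is correct and supplies a genuine reduction. For $(1)\Rightarrow(2)$, the factorization of $\Lambda\to R/I$ through $\widehat{\Lambda_{\m}}$ holds because the maximal ideal of the finite-dimensional local ring $R/I$ is nilpotent and pulls back to $\m$, so $\m^N$ is killed for $N\gg 0$; then smoothness at $\m$ together with $K$ algebraically closed gives $\widehat{\Lambda_{\m}}\cong K[[x_1,\dots,x_d]]$, and a lift is obtained by sending the $x_i$ to arbitrary preimages in the nilpotent ideal $\n$. For $(2)\Rightarrow(1)$, the d\'evissage by induction on $\dim_K I$ is the standard one and works as you describe: any nonzero ideal of the Artinian local ring $R$ contains a simple submodule $J\cong R/\n$ (one-dimensional over $K$ since $K$ is algebraically closed), and splitting $R\twoheadrightarrow R/J\twoheadrightarrow R/I$ yields two lifting problems of smaller kernel dimension, with base case $\dim_K I\le 1$ being precisely $(*)$; one should just note that in the inductive step the preimage of the maximal ideal is preserved at each stage, which is automatic since $J\subset I\subset\n$. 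The residual ingredient you invoke --- that the unrestricted square-zero lifting property for Artinian local $K$-algebras with residue field $K$ at $\m$ characterizes smoothness of $\Spec\Lambda$ at $\m$ --- is exactly Grothendieck's infinitesimal criterion in local form, which is also what the paper's citation refers to. So your argument and the paper's citation rest on the same standard input; what you add is the explicit reduction from the ``small extension'' formulation of the lemma to the usual general formulation, which is worth having spelled out.
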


\begin{proof}[Proof of Lemma \ref{Ext2smooth}]
The module structure $X$ corresponds to a homomorphism $\theta\colon A \to M_d(K)$
with $\theta(e_i) = \epsilon_i$ for all $i$,
and we can identify $\End_K(X)$ with $M_d(K)$, with the left and right actions of 
$a\in A$ corresponding to multiplication on the left or right by $\theta(a)$.

One knows that $\RRdA = \Spec \Lambda$, for a suitable finitely generated commutative $K$-algebra $\Lambda$ with the property that 
$K$-algebra maps $\Lambda\to R$ correspond to $K$-algebra maps $\theta:A \to M_d(R)$
with $\theta(e_i)=\epsilon_i$ for all $i$.

In the setup of the lifting criterion we are given a finite dimensional commutative local $K$-algebra $R$
with maximal ideal $\n$ and an ideal $I$ with $K$-dimension 1 and $I^2=0$.
Let $p:M_d(R)\to M_d(R/I)$ and $\pi:M_d(R/I) \to M_d(R/\n) \cong M_d(K)$
be the natural maps.
According to the correspondence above, for the criterion we are
given an algebra homomorphism $\phi:A \to M_d(R/I)$ with $\pi \phi = \theta$ and
$\phi(e_i)=\epsilon_i$ for all $i$, and need to lift it to an algebra homomorphism $\psi:A\to M_d(R)$
with $\phi = p\psi$ and $\psi(e_i)=\epsilon_i$ for all $i$.

Now $M_d(I)$ is an ideal in $M_d(R)$, so it becomes a bimodule with left and right actions of $M_d(R)$.
Since $I^2=0$, these actions descend to actions of $M_d(R/I)$, so via $\phi$ they give actions of $A$ on the left and right.
In fact the actions descend to $M_d(R/\n) \cong M_d(K)$, so the actions of $A$ are given by left and
right multiplication by $\theta(a)$.
Thus $M_d(I)$ isomorphic to $\End_K(X)$ as a $A$-$A$-bimodule.
Then $H^2(A,M_d(I)) \cong H^2(A,\End_K(X)) \cong \Ext^2(X,X) = 0$.
Thus any (square zero) algebra extension of $A$ by $M_d(I)$ is trivial
(see for example \cite[\S 9.3]{Weib}).
Consider the pullback diagram
\[
\begin{CD}
0 @>>> M_d(I) @>>> P @>>> A @>>> 0 \\
& & @| @VVV @V\phi VV \\
0 @>>> M_d(I) @>>> M_d(R) @>p >> M_d(R/I) @>>> 0
\end{CD}
\]
where $P = \{ (C,a) \in M_d(R)\oplus A \colon  p(C) = \phi(a) \}$.
Since $p$ and $\phi$ are algebra homomorphisms, $P$ becomes a $K$-algebra, and it is an
extension of $A$ by $M_d(I)$, so it must be the  trivial extension.
Thus the map $P \to A$ has a section $A \to P$. 
Composing it with the homomorphism $P \to M_d(R)$ we get a lifting $\psi_0:A \to M_d(R)$
with $p\psi_0 = \phi$.

We now need to adjust $\psi_0$ to obtain a lifting $\psi$ with $\psi(e_i) = \epsilon_i$ for all $i$.
Let $S = K^n$ with the coordinatewise multiplication, so it is a separable algebra over $K$, 
and let $\sigma:S\to A$ be the map sending the coordinate vectors in $S$ to the idempotents $e_i$.
Let $\eta:S\to M_d(K)$ be the map sending the coordinate vectors to $\epsilon_i$.
Then the condition that $\theta(e_i)=\epsilon_i$ for all $i$ can be rewritten as
$\theta \sigma = \eta$. Similarly $\phi\sigma = \eta$, and we need to find $\psi$ with $\psi\sigma = \eta$.

For $s\in S$ define $d(s) = \psi_0(\sigma(s)) - \eta(s)$. This defines a map from $S$ to $M_d(R)$,
and it has image contained in $M_d(I)$ since $pd = \phi\sigma - \eta = 0$.
Now the bimodule action of $A$ on $M_d(I)$ gives a bimodule action of $S$,
in which, by the discussion above, the left or right action of $s\in S$ can be given
by left or right multiplication by any of $\eta(s)$ or $\phi(\sigma(s))$ or $\psi_0(\sigma(s))$.
Now it is easy to see that $d:S\to M_d(I)$ is a derivation, so, since $S$ is separable,
it is an inner derivation, so there is $\gamma \in M_d(I)$ with 
$d(s) = \eta(s)\gamma - \gamma \eta(s)$ for all $s\in S$.
Letting $g = 1+ \gamma\in \Gl_d(R)$, it follows that
$g \psi_0(\sigma(s)) g^{-1} = \eta(s)$.
Then defining $\psi(a) = g\psi_0(a) g^{-1}$ we obtain the required lift of $\phi$.
\end{proof}

\section{Geometric applications of the projective quotient algebra}

%
%%--------------------------------------------------------------
%%%%%%% Section: Quiver Grassmannians   ----
%%-------------------------------------------------------------
%

\subsection*{Desingularizations of quiver Grassmannians for representation-finite algebras}
This is a straightforward generalization of \cite[\S 7]{CIFR}. 
Let $A$ be a finite-dimensional, representation-finite, basic associative $K$-algebra and $M$ a finite-dimensional $A$-module and let $\dd$ be a dimension vector. 
As in the introduction, the irreducible components of $\GrMd$ are of the form 
$\overline{\mcS_{[N_i]}}$ for some $A$-modules $N_1,\dots,N_\ell$.
Theorem \ref{GrassDes} is a direct consequence of the following theorem (compare \cite[Theorem 7.5]{CIFR} for $A$ hereditary).  
 
\begin{thm} Let $N=N_i$ for some $i\in\{1, \ldots , \ell\}$ and let $(\dd,\rr) = \Dim \cp (N)$.
\begin{itemize}
\item[(1)]
$\GrcMd$ is smooth (not just as a variety, but also with its natural scheme structure) and $\overline{\mcS_{[c(N)]}}$ is a connected component. 
%and equidimensional of dimension $\tits{\wdd}{\Dim \cp(M)- \wdd}_B$ where $\tits{-}{-}_B$ is the homological Euler form of $B$. 
\item[(2)] 
The map $\pi\colon \GrcMd\to \GrMd$ induced by multiplication by $e$ is projective
and $\pi^{-1}(U) \cong \Gr_B\binom{\cp (M)/\cp (U)}{\Dim \cp (N)-\Dim \cp (U)}$ for
$U\in \GrMd$.
\item[(3)] 
The image of $\pi $ is closed and contains $\overline{\SN}$. Over an open subset of $\SN$ the map is an isomorphism.  

\end{itemize}
\end{thm}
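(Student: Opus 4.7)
The plan is to show $\GrcMd$ is smooth as a scheme at every closed point, via the standard tangent--obstruction formalism: at a point $[F\hookrightarrow c(M)]$ with $H=c(M)/F$, the Zariski tangent space is $\Hom_B(F,H)$ and obstructions lie in $\Ext^1_B(F,H)$, so it suffices to show the latter vanishes universally. A two-step long exact sequence chase does this: applying $\Hom_B(c(M),-)$ to $0\to F\to c(M)\to H\to 0$, the outer terms $\Ext^1_B(c(M),c(M))$ and $\Ext^2_B(c(M),F)$ vanish (by Theorem~\ref{rigid} and $\pdim c(M)\le 1$ from Lemma~\ref{l:cpd}, respectively), giving $\Ext^1_B(c(M),H)=0$; applying $\Hom_B(-,H)$ to the same sequence and using $\Ext^2_B(H,H)=0$ (Lemma~\ref{Ext2}, since $\pdim H\le 2$ by Lemma~\ref{gldim2}) then forces $\Ext^1_B(F,H)=0$. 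To identify $\overline{\mcS_{[c(N)]}}$ as a connected component, I match dimensions at $c(N)$: the orbit formula gives $\dim\mcS_{[c(N)]}=\dim\Hom_B(c(N),c(M))-\dim\End_B(c(N))$, while applying $\Hom_B(c(N),-)$ to $0\to c(N)\to c(M)\to c(M)/c(N)\to 0$ together with $\Ext^1_B(c(N),c(N))=0$ identifies this with $\dim\Hom_B(c(N),c(M)/c(N))$, the tangent space dimension at $c(N)$. Smoothness then forces the connected component through $c(N)$ to be irreducible of this dimension, hence equal to $\overline{\mcS_{[c(N)]}}$.

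\textbf{Parts (2) and (3).} Projectivity of $\pi$ is immediate from projectivity of $\GrcMd$. For the fiber formula over $U\in\GrMd$, note $c(U)\hookrightarrow c(M)$ since $c$ preserves monomorphisms (Lemma~\ref{IntExt}(4)); any $V\in\pi^{-1}(U)$ is stable as a subobject of the bistable $c(M)$, so by Lemma~\ref{stable} the natural map $c(e(V))=c(U)\to V$ is a monomorphism, giving a chain $c(U)\subseteq V\subseteq c(M)$. The assignment $V\mapsto V/c(U)$ then identifies $\pi^{-1}(U)$ scheme-theoretically with the subobjects of $c(M)/c(U)$ of dimension vector $\Dim c(N)-\Dim c(U)$; here the constraint $e(V/c(U))=0$ is automatic because the first $n$ coordinates of this difference vanish. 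For (3), closedness of $\im\pi$ comes from properness; the inclusion $c(U)\in\pi^{-1}(U)$ for $U\in\mcS_{[N]}$ shows $\mcS_{[N]}\subseteq\im\pi$, and closedness upgrades this to $\overline{\mcS_{[N]}}\subseteq\im\pi$. Over any $U\in\mcS_{[N]}$ the dimension-vector difference $\Dim c(N)-\Dim c(U)$ vanishes, so by part~(2) the fiber is a single reduced point; thus $\pi^{-1}(\mcS_{[N]})\to\mcS_{[N]}$ is proper and set-theoretically bijective, hence finite, and Zariski's Main Theorem then makes it an isomorphism over a dense open subset of $\mcS_{[N]}$.

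\textbf{Main obstacle.} The subtlest point is Part~(1): the Ext-vanishing calculations are clean, but translating them into scheme-theoretic smoothness of $\GrcMd$ (not merely smoothness of the underlying reduced variety) requires a careful invocation of the deformation theory of Quot / quiver Grassmannian schemes -- specifically, that the obstruction to lifting an $R$-point to an $R[\varepsilon]$-point really lives in $\Ext^1_B(F,H)$ without further correction. Once scheme-theoretic smoothness is secured, the dimension matching at $c(N)$, the fiber formula of Part~(2), and the generic isomorphism in Part~(3) follow essentially formally from the recollement machinery of \S\ref{s:mainrecoll} and the rigidity of the image of $c$.
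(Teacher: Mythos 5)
Your Ext-vanishing argument for smoothness in Part (1) correctly unrolls the paper's citation of \cite[Proposition~7.1]{CIFR}: you show $\Ext^1_B(F,H)=0$ by first killing $\Ext^1_B(c(M),H)$ via rigidity and $\pdim c(M)\le 1$, then feeding this and $\Ext^2_B(H,H)=0$ (from $\operatorname{gl.dim} B\le 2$, Lemma~\ref{Ext2}) into the long exact sequence for $\Hom_B(-,H)$. Your route to the connected-component claim also genuinely differs from the paper's: you match $\dim\mcS_{[c(N)]}$ against the tangent-space dimension at $c(N)$ using $\Ext^1_B(c(N),c(N))=0$, whereas the paper instead identifies $\pi^{-1}(\mcS_{[N]})$ with $\mcS_{[c(N)]}$ and pulls back an open subset. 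Both arguments are valid; yours localises entirely at the point $c(N)$, while the paper's exhibits $\mcS_{[c(N)]}$ as an honest preimage. Part (2) is essentially the paper's argument, though you should be careful about declaring the identification \emph{scheme-theoretic}: the stability argument (Lemma~\ref{stable}) is formulated for $B$-modules over $K$, and extending it to $R$-points requires at least a remark that $c$ commutes with the relevant direct sums, that $c(U)\otimes R\hookrightarrow V$ is split enough to keep quotients flat, etc. The paper contents itself with the set-theoretic claim.

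The genuine gap is in Part (3). The step ``finite and set-theoretically bijective, hence Zariski's Main Theorem makes it an isomorphism over a dense open subset'' is not a consequence of ZMT alone: a finite bijective morphism of varieties need not be generically an isomorphism in positive characteristic (Frobenius $\mathbb{A}^1\to\mathbb{A}^1$ is finite and bijective but nowhere \'etale), and this theorem is stated without any characteristic hypothesis. To rule this out you would need to know that $\pi$ is birational, e.g.\ via the scheme-theoretic reducedness of the generic fibre together with generic flatness. Your appeal to ``a single reduced point'' depends on the scheme-theoretic fibre formula that you asserted but did not prove; without it you only control the underlying point set. The paper instead proves generic isomorphism directly by a tangent-map computation: it writes the differential of $\pi$ at $U\in\mcS_{[c(N)]}$ as $\Hom_B(U,c(M)/U)\to\Hom_A(e(U),M/e(U))$, shows it factors through $\Hom_B(U,c(M/e(U)))$ with $\Hom_B(U,X)=0$ for the kernel $X$ of $c(M)/U\to c(M/e(U))$ (because $U$ is bistable and $e(X)=0$), hence is injective, and then matches dimensions at a smooth point of $\mcS_{[N]}$ to get an isomorphism of tangent spaces. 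That argument gives \'etale at those points, and combined with bijectivity on $K$-points yields the open immersion. This is the real content of Part (3) and is not subsumed by ZMT.
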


\begin{proof} 
(1)  This follows from  \cite[Proposition 7.1]{CIFR} since $B$ has global
dimension $\le 2$ and $c(M)$ has injective and projective dimension $\le 1$ and
$\Ext^1_B(c(M),c(M))=0$.

To show that $\overline{\SwN}$ is a connected component, it is enough to see that 
it contains an open subset. Since $\SN$ contains by assumption an open subset, we just need to see $\pi^{-1}(\SN)=\SwN$. So, take $U\subset \cp (M)$ such that $e(U)\cong N$. By Lemma \ref{IntExt} we conclude $\cp (N)$ is a subquotient of $U$ and since they have the same dimension vector they are isomorphic. On the other hand, any point in $\SwN$ maps clearly to $\SN$.   

(2) 
The map $\pi$ is projective since $\GrcMd$ is projective. 
We claim that we have  
\[
\pi^{-1}(U)= \{ F\subset \cp (M) \mid \Dim F =\wdd, \cp (U) \subset F \}. 
\]
It is enough to show that for $F\subset \cp (M)$ with $\Dim F =\wdd $ one has the following equivalence: 
\[ 
\eprime (F) =U \Leftrightarrow \cp (U) \subset F.
\]
So, given $F\subset \cp (M)$ with $\Dim F =\wdd$ and $\eprime (F) =U$. 
Then, we know that $F$ is in $\HKmod$ and stable because it is the submodule of the stable module $\cp (M)$ and by Lemma~\ref{stable}, we have 
\[ 
\cp (U) =\cp \eprime (F) \subset F. 
\]
On the other hand, given $F\subset \cp (M)$.  
with $\Dim F =\wdd$ and $\cp (U) \subset F$. Since $\eprime $ is exact, we get $U\subset \eprime (F)$. But we also have for every simple $A$-module $S_i$ 
\[ 
\begin{aligned}
(\Dim U)_i & = \dim \Hom (P_i, U) =\dim \Hom (P_i, N) = \\
\dim \cp (N)(P_i\to 0) &=\dim F(P_i\to 0) = (\Dim \eprime (F))_i 
\end{aligned}
\] 
and therefore $U=\eprime (F)$.
 
(3) Since $\pi $ is projective, its image is closed. For every $U\in \SN$ we have $\cp (U) \in \SwN$ and $\pi (c(U)) =U$, therefore $\overline{\SN}$ is contained in the image. By (2), we get that the morphism restricts to a bijection 
$\pi^{-1}(\SN)=\SwN \to \SN$. To see that it is an isomorphism over an open set we just need to see that it is an isomorphism on tangent spaces over an open set. Now, the map $\pi$ has a scheme-theoretic version $\pi^\prime$ -straightforward to define on commutative $K$-algebra valued points- such that the map on underlying reduced schemes coincides with $\pi$. Let $U\in \GrcMd$ (be a $K$-valued point), then the tangent map of $\pi^\prime$ at $U$ is given by $\Hom_B(U, c(M)/U) \xrightarrow{e} \Hom_A( e(U), M/e(U))$. Now, we prove for $U\in \SwN$, the map on tangent space is injective as follows: Since $U$ is bistable, it equals $ce(U)$. The canonical epimorphism $c(M)/U=c(M)/ce(U) \to c( M/e(U))$ has a kernel $X$ with $e(X)=0$, applying $\Hom_B(U,-)$ gives an exact sequence 
\[ 
0\to \Hom_B(U,X)\to \Hom_B(U,c(M)/U )\to\Hom_B(U,c( M/e(U))\cong \Hom_A(e(U),M/e(U))
\]
Since $U$ is is bistable, we get $\Hom_B(U,X)=0$ and the second map equals the tangent map.

Since $\GrcMd$ is reduced, the tangent map actually factors through the tangent space of the variety $\GrMd$ at $e(U)$. For $U\in \SwN$ we have 
\[ 
\dim \Hom_B(U, c(M)/U) =\dim \Hom_A (U, M) - \dim \Hom_A(U,U) =\dim \SN.
\]
Since the variety $\GrMd$ has to have some smooth points in $\SN$, we conclude that for these the map on tangent spaces is an isomorphism. 
\end{proof}

%%%%%%%%%%%%%%%%%%%%%%%%%%%%%%%%%%%%%%%%%%%%%%%%%%%%%%%%%%%%%%%%%%%%%%%%%%
%%%%%%%%%%%%%%%%%%%%%%%%%%%%%%%%%%%%%%%%%%%%%%%%%%%%%%%%%%%%%%%%%%%%%%%%%%%%%%%%
%%%%%%%%%%%%%%%%%%%%%%%%%%%%%%%%%%%%%%%%%%%%%%%%%%%%%%%%%%%%%%%%%%%%%%%%%%%%%%

%
%%--------------------------------------------------------------
%%%%%%% Section Orbit closures ----
%%-------------------------------------------------------------
%

\subsection*{Orbit closures as quotients}
Let $\RdrB$ be the representation space of all $(\dd, \rr)$-dimensional $B$-modules. 
The functor $e$ induces a map
$\RdrB \to \RdA$, also denoted $e$.
We observe the following. 

\begin{lem} 
\label{ImRes} 
Assume $\RdrB\neq \emptyset $ and $N\in \RdA$. Then the following are equivalent. 
\begin{itemize}
\item[(1)] $N\in \Bild \eprime $
\item[(2)] $\Dim \cp (N) \leq (\dd, \rr )$ (pointwise at every idempotent for $B$)
\end{itemize}
In particular, $\Bild \eprime $ is a closed subset of $\RdA$.
\end{lem}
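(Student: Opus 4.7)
The plan is to prove the equivalence $(1)\Leftrightarrow (2)$ directly and then deduce the closedness of $\Bild \eprime$ from an upper-semicontinuity argument applied to the explicit formula for $\cp(N)$.

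For $(1)\Rightarrow (2)$: Take $F\in \RdrB$ with $\eprime(F)\cong N$. By Lemma~\ref{IntExt}(2), $\cp(\eprime(F))=\cp(N)$ is a subquotient of $F$, so $\Dim \cp(N) \le \Dim F = (\dd,\rr)$, componentwise.

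For $(2)\Rightarrow (1)$: Since $\eprime \cp(N) \cong N$, we have $\Dim \cp(N) = (\dd,\rr')$ for some $\rr' \in \N_0^{m-n}$, and by hypothesis $\rr' \le \rr$. Let $S$ be a semisimple $B$-module with $\Dim S = (0,\rr-\rr')$; such an $S$ exists because the simple $B$-modules corresponding to the idempotents $e_{n+1},\dots,e_m$ are annihilated by $e$, hence are $B/BeB$-modules. Set $F:= \cp(N)\oplus S$. Then $F\in \RdrB$ and $\eprime(F)\cong \eprime \cp(N)\cong N$, so $N\in \Bild \eprime$. (Observe that this construction also shows that $\RdrB$ is automatically non-empty whenever there exists some $N\in \RdA$ with $\Dim \cp(N)\le (\dd,\rr)$; the non-emptiness hypothesis is essentially there to ensure that (1) is not vacuous.)

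For the closedness assertion, use the formula from Lemma~\ref{l:cpd}: at the vertex $j\in\{n+1,\dots,m\}$ of $B$ corresponding to the indecomposable $f_U\colon P_U\to U$ of $\mcH$ (with $U$ a non-projective indecomposable $A$-module and $P_U$ its projective cover),
\[
\cp(N)(P_U\to U) = \coKer\!\bigl(\Hom_A(U,N)\hookrightarrow \Hom_A(P_U,N)\bigr),
\]
where the map is injective because $P_U\twoheadrightarrow U$ is surjective. Hence
\[
\dim \cp(N)_j \;=\; \dim \Hom_A(P_U,N) - \dim \Hom_A(U,N).
\]
The first summand depends only on $\dd$ (because $P_U$ is projective, $\dim\Hom_A(P_U,N)$ is determined by the multiplicities of the $P_i$ in $P_U$ and the entries of $\dd$). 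The second is upper semicontinuous in $N\in \RdA$ by the standard semicontinuity of Hom dimensions on representation varieties. Thus $N\mapsto \dim \cp(N)_j$ is lower semicontinuous on $\RdA$, so each condition $\dim \cp(N)_j \le r_j$ defines a closed subset; intersecting over $j=n+1,\dots,m$ yields the closedness of $\Bild\eprime$.

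The only delicate point is the construction of the complement module $S$ in $(2)\Rightarrow(1)$, and this is immediate from the fact that the vertices $n+1,\dots,m$ of $B$ carry simples killed by $e$; everything else is a routine application of Lemma~\ref{IntExt}(2) and semicontinuity, so I do not expect a genuine obstacle.
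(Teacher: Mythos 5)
Your proof of the equivalence $(1)\Leftrightarrow(2)$ is essentially the same as the paper's: $(1)\Rightarrow(2)$ from Lemma~\ref{IntExt}(2), and $(2)\Rightarrow(1)$ by padding $\cp(N)$ with a semisimple $B/BeB$-module $S$ of dimension $(0,\rr-\rr')$. (The paper constructs $S$ as $S_2/S_1$ for the unique semisimple modules of dimensions $(\dd,\rr)$ and $\Dim\cp(N)$, which amounts to the same thing.)

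For the closedness, however, you take a genuinely different and cleaner route. The paper observes that $\Bild\eprime$ is a $\Gd$-stable union of orbits, invokes representation-finiteness to reduce the closedness to checking behaviour under one-step degenerations arising from short exact sequences $0\to N_1\to M\to N_2\to 0$, and then verifies $\Dim\cp(N_1\oplus N_2)\le\Dim\cp(M)$ using that $\cp$ preserves monos and epis. This tacitly relies on the theorem that for representation-finite algebras the degeneration order is generated by such exact sequences. You instead use the explicit formula $\dim\cp(N)_j=\dim\Hom_A(P_U,N)-\dim\Hom_A(U,N)$ at the vertex $j$ corresponding to $P_U\to U$: since the first term is a linear function of $\dd$ and the second is upper semicontinuous in $N$, each constraint $\dim\cp(N)_j\le r_j$ is a closed condition, and intersecting them gives the claim. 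This is more elementary --- it bypasses the degeneration-order theorem entirely --- and it also makes the lemma's closedness assertion visibly independent of the orbit structure of $\RdA$. Both proofs are correct; yours buys economy of hypotheses, while the paper's argument ties in more directly with the degeneration point of view used in the subsequent lemma about $\Bild\eprime=\overline{\mcO_M}$ (which cites \cite{Zw5}).
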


\begin{proof}
Assume $N= \eprime (F)$ for some $F\in \RdrB$. 
By Lemma~\ref{IntExt} we get $\Dim \cp (N) \leq \Dim F= (\dd , \rr)$. 
Now assume $\Dim \cp (N) \leq (\dd , \rr)$. 
Because the algebra $B$ is basic and finite-dimensional, there is a unique semi-simple $B$-module $S_1$ of dimension $\Dim \cp (N)$ and a unique semi-simple $B$-module $S_2$ of dimension 
$(\dd, \rr )$. Since $\Dim \cp (N) \leq (\dd, \rr)$ one has $S_1$ is a submodule of $S_2$, set 
$S= S_2/S_1$. We have $\eprime (S)=0$ and the dimension of $S$ is $(\dd, \rr)-\Dim \cp (N)$. 
Set $F= \cp (N) \oplus S$, this gives a point in $\RdrB$ with $\eprime (F) =N$, so $N \in \Bild \eprime$. 
Since we are in the representation-finite case, we just need to see the following: 
Let 
\[0\to N_1\xrightarrow{i}M \xrightarrow{p} N_2\to 0\] 
be a short exact sequence of $A$-modules (with $\Dim M=\dd)$ and $\Dim \cp (M) \leq (\dd, \rr)$, 
then $\Dim \cp (N_1\oplus N_2) \leq (\dd, \rr)$. 
We look at the (not exact) sequence 
\[ 
\cp (N_1)\xrightarrow{\cp (i)} \cp (M) \xrightarrow{\cp (p)} \cp (N_2)
\]
since $\cp $ is an additive functor we have 
$\Bild \cp (i) \subset \Ker \cp (p)$ and since intermediate extensions preserve mono- and epimorphisms we have that $\cp(i)$ is a monomorphism and $\cp (p) $ is an epimorphism. 
This implies $\Dim \cp (N_1\oplus N_2) \leq \Dim \cp (M)$. 
\end{proof}

Let $M$ be a ($K$-valued) point in $\RdA$. We consider the closure of its $\Gd$-orbit $\overline{\mcO_M} \subset \RdA$ (with the reduced scheme structure). 

\begin{lem} If $\Dim \cp (M)= (\dd , \rr )$,
then the (set-theoretic) image of $\eprime \colon \RdrB \to \RdA$ is $\overline{\mcO_M}$. 
\end{lem}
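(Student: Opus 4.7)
The plan is to combine the preceding Lemma~\ref{ImRes}, which identifies $\Bild e$ set-theoretically with $\{N \in \RdA : \Dim c(N)\le (\dd,\rr)\}$ and shows it is closed, with Zwara's theorem characterizing degenerations for representation-finite algebras in terms of $\Hom$-inequalities. Throughout, the assumption $\Dim c(M)=(\dd,\rr)$ is used twice: first to guarantee that $M$ itself lies in $\Bild e$, and second to turn the defining inequality of $\Bild e$ into a $\Hom$-order comparison against $M$.

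For the inclusion $\overline{\mcO_M}\subseteq\Bild e$: the map $e\colon\RdrB\to\RdA$ is equivariant with respect to the projection $\Gd\times\Glr\to\Gd$, so $\Bild e$ is $\Gd$-stable. Since it is closed by Lemma~\ref{ImRes} and contains $M$ by hypothesis, it contains $\overline{\mcO_M}$.

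For the reverse inclusion, I would compute the dimension vector of $c(N)$ at each indecomposable of $\mcH$, using Lemma~\ref{initial}. At a summand of the form $(P_i\to 0)$, the formula of Lemma yields
\[
\dim c(N)(P_i\to 0)=\dim\Hom_A(P_i,N)=\dim e_iN,
\]
so this part is automatically equal for $M$ and $N$ (both lie in $\RdA$). At a summand $(P_U\to U)$ with $U$ a non-projective indecomposable and $P_U\to U$ its projective cover, the map $\Hom_A(U,N)\to\Hom_A(P_U,N)$ is injective since $P_U\to U$ is surjective, so
\[
\dim c(N)(P_U\to U)=\dim\Hom_A(P_U,N)-\dim\Hom_A(U,N).
\]
Because $P_U$ is a sum of indecomposable projectives, $\dim\Hom_A(P_U,N)$ depends only on $\dd=\Dim N$, hence equals $\dim\Hom_A(P_U,M)$. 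Comparing the $(P_U\to U)$-component of the inequality $\Dim c(N)\le\Dim c(M)$ then yields
\[
\dim\Hom_A(U,M)\le\dim\Hom_A(U,N)
\]
for every non-projective indecomposable $U$, while the analogous inequality for projective indecomposables holds with equality. By additivity it extends to all finite-dimensional $A$-modules $X$.

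The main obstacle is passing from this $\Hom$-order to the actual degeneration $N\in\overline{\mcO_M}$; this is not formal, but is precisely the content of Zwara's theorem for representation-finite algebras (\emph{Degenerations for modules over representation-finite algebras}), which applies since $A$ is assumed representation-finite. Invoking it completes the argument and gives $\Bild e=\overline{\mcO_M}$ set-theoretically.
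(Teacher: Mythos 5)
Your proposal is correct and follows essentially the same route as the paper's proof: use Lemma~\ref{ImRes} for closedness and the dimension-vector inequality, evaluate $c(N)$ and $c(M)$ componentwise using the $\coKer$ formula, cancel the projective contribution (which depends only on $\dd$) to obtain the $\Hom$-order inequality $\dim\Hom_A(X,M)\le\dim\Hom_A(X,N)$ for all $X$, and finally invoke Zwara's theorem for representation-finite algebras.
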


\begin{proof} 
The image is closed and $\Gd$-invariant in $\RdA $ (see the previous lemma). 
Since $\eprime \cp(M) =M$ we conclude that 
$\overline{\mcO_M}$ is a subset in the image of $\eprime $. 
Now, given $N$ in the image of $\eprime $, let us say $N=\eprime (F)$. 
Then, by Lemma~\ref{ImRes} we get $\Dim \cp (N) \leq \Dim \cp (M)$. 
We conclude for $P\to X$ indecomposable in $\mcH $ 
\[
\begin{aligned}
\dim \cp (N) (P\to X) & =\dim \Hom_A (P,N) - \dim \Hom_A (X,N) \\
                          &\leq \dim \Hom_A(P,M) - \dim \Hom_A (X,M) \\
                          &= \dim \cp (M) (P\to X).
\end{aligned}
\]
But since $\Dim N= \Dim M$ we have $\dim \Hom_A (P,N)=\dim \Hom_A (P,M)$, 
so $\dim \Hom_A (X,N)\geq \dim \Hom_A (X,M)$,
and this implies by \cite{Zw5} that $N\in \overline{\mcO_M}$.
\end{proof}

We now have our generalization of \cite[Theorem 1.2]{CFR}.

\begin{thm}
\label{CFR-Thm} 
Assume that $K$ has characteristic zero. 
If $M$ is an $A$-module and $(\dd, \rr ) =\Dim \cp (M)$,
then $\overline{\OM}$ is isomorphic to the affine quotient variety
$\RdrB\dbslash\Glr$, and also to $\overline{\mcO_{c(M)}} \dbslash \Glr$.
\end{thm}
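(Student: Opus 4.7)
The plan is to assemble the theorem from two ingredients already established: Lemma~\ref{FFT} (the First Fundamental Theorem for finite-dimensional algebras with an idempotent), which gives a closed immersion $\RdrB \dbslash \Glr \hookrightarrow \RdA$, and the preceding lemma, which (under the hypothesis $(\dd,\rr) = \Dim c(M)$) identifies the set-theoretic image of $e\colon \RdrB \to \RdA$ with $\overline{\mcO_M}$.

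First I would deduce $\RdrB \dbslash \Glr \cong \overline{\mcO_M}$. The closed immersion from Lemma~\ref{FFT} realises $\RdrB \dbslash \Glr$ as a closed subscheme of $\RdA$ whose underlying set is $\overline{\mcO_M}$. The GIT quotient $\RdrB \dbslash \Glr$ is reduced, since it is the spectrum of the $\Glr$-invariants of the reduced coordinate ring $K[\RdrB]$; and $\overline{\mcO_M}$ carries the reduced structure by convention. A reduced closed subscheme of $\RdA$ is determined by its underlying set, so the two schemes coincide.

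Second I would identify $\overline{\mcO_{c(M)}} \dbslash \Glr$ with $\overline{\mcO_M}$ by restricting the above to the closed $(\Gd\times \Glr)$-invariant subvariety $\overline{\mcO_{c(M)}} \subseteq \RdrB$. Because $\Glr$ is reductive and $K$ has characteristic zero, the Reynolds operator makes the surjection $K[\RdrB] \twoheadrightarrow K[\overline{\mcO_{c(M)}}]$ remain surjective after passing to $\Glr$-invariants; hence the induced morphism $\overline{\mcO_{c(M)}} \dbslash \Glr \to \RdrB \dbslash \Glr$ is a closed immersion. Composing with the isomorphism just obtained yields a closed immersion $\overline{\mcO_{c(M)}} \dbslash \Glr \hookrightarrow \overline{\mcO_M}$. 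Its image is closed, is $\Gd$-invariant (since $\Gd$ and $\Glr$ commute and the whole construction is $\Gd$-equivariant), and contains the point $M = e(c(M))$; therefore the image contains, and hence equals, $\overline{\mcO_M}$. Since both sides are reduced, the closed immersion is an isomorphism.

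The only substantive issue I expect is the bookkeeping around scheme structure and group actions, i.e.\ making sure that (a) the reduced structure on the GIT quotient agrees with the reduced structure on $\overline{\mcO_M}$, and (b) the $\Gd$-action on $\RdrB \dbslash \Glr$ transported through the isomorphism of step one really is the natural $\Gd$-action on $\overline{\mcO_M}\subseteq \RdA$, so that the density/closed argument for the image of $\overline{\mcO_{c(M)}} \dbslash \Glr$ is valid. Once these are checked, there are no further calculations to perform.
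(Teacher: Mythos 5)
Your proposal is correct and follows essentially the same strategy as the paper for the first isomorphism (closed immersion from Lemma~\ref{FFT} plus the preceding lemma identifying the set-theoretic image, then observing both sides are reduced). For the second isomorphism, where the paper simply cites \cite[Theorem~3.3]{CFR} as ``straightforward,'' you supply a genuine argument: restrict the closed immersion to the $\Glr$-invariant closed subvariety $\overline{\mcO_{c(M)}}$, use the Reynolds operator to see the induced map on invariants is still a closed immersion, and use $\Gd$-equivariance plus the fact that the image contains $M$ to conclude surjectivity. That argument is sound and is in fact the content of the cited reference, so you have recovered it independently rather than taken a different route.
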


\begin{proof}
The first isomorphism holds since the map
$\eprime \colon \RdrB \dbslash \Glr \to \RdA$ is a closed immersion
by Lemma~\ref{FFT}, and it has set-theoretic image 
$\overline{\mcO_M}$ by the previous lemma. 
The second isomorphism is straightforward (see \cite[Theorem 3.3]{CFR}).
\end{proof}

%-------------------------------------------------------------------------------------------
%...................Section: Desingularization of orbit closures
%-------------------------------------------------------------------------------------------
%

\subsection*{Desingularization of orbit closures}

Let $K$ be an algebraically closed field of characteristic zero. 
Let $Q_B$ be a quiver as in \S\ref{deframedquiver} and let $Q^\infty$ be the deframed 
quiver constructed from a given dimension vector $(\dd, \rr)$. 
We consider the stability notion on $\RrQinfty $ given by the vector $\theta \in \R^{m-n+1}$ defined as $\theta_i = -1 $ if $n+1\leq i\leq m$ and $\theta_{\infty} = \sum_{i=n+1}^m d_i$. 
Thus a representation $M$ in $\RrQinfty $ is $\theta$-stable if and only if for any non-zero proper subrepresentation $N$ of $M$ one has $\theta \cdot \Dim N > 0$. One can also consider $\theta$-semi-stable representations $M$ defined by the condition $\theta \cdot \Dim N \geq 0$. But for this particular $\theta$ these two notions are equivalent. 
By geometric invariant theory (compare \cite{Ki}) there is a projective morphism of varieties 
\[ 
\RrQinfty^{s} / \Gl_{(1, \rr ) } \to \RrQinfty \dbslash \Gl_{(1, \rr )} 
\]
where $()^{s}$ denotes the subset of $\theta$-stable points and the quotient on the left hand side is a geometric quotient. 

Now recall, that we have an $\Glr$-equivariant isomorphism of varieties  
$\RdrQB \to \RrQinfty$. It is easy to see that under this isomorphism the $\theta$-stable points correspond to the points of $\RdrQB$ which are stable $KQ_B$-modules with respect to the recollement given by the idempotent $e=\sum_{i=1}^n e_i$.  
Thus the projective morphism becomes 
\[ 
\RdrQB^{s} / \Glr \to \RdrQB \dbslash \Glr 
\]
where $()^{s}$ denotes the stable points.

Now, let $B$ be a finitely generated algebra and let $e_1, \ldots , e_n, e_{n+1}, \ldots , e_m$
($n\le m$) be a complete set of orthogonal idempotents in $B$.
By writing $B$ as a quotient of a path algebra $KQ_B$ one obtains a projective morphism 
\[ 
\RdrB^{s} / \Glr \to \RdrB \dbslash \Glr.
\]
% where $e=\sum_{i=1}^n e_i$. 
The left hand side is a geometric quotient---to see that it is a good quotient, use e.g.\ \cite[Theorem 7.1.4]{BB} and the other properties check directly. 

Now let $(\dd, \rr) =\Dim c(M)$. Since $\overline{\mcO_{c(M)}}\cap \RdrB^{s}$ is closed in $\RdrB^{s}$, the geometric quotient by $\Glr$ exists and by composing with the closed immersion and the isomorphism from the previous theorem one gets a projective map  
\[ 
\varphi\colon 
(\overline{\mcO_{c(M)}}\cap \RdrB^{s}) / \Glr \to \RdrB^{s} / \Glr 
\to \RdrB \dbslash \Glr \cong \overline{\mcO_M}.  
\]

\begin{thm}\label{OrbDes}$\varphi$ is a desingularization. 
\end{thm}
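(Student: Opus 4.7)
The plan is to verify the three conditions for $\varphi$ to be a desingularization: it is projective, its source is smooth, and it is birational. Projectivity is built into the GIT construction preceding the statement, so two things remain.

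For smoothness of the source, I would first show that every stable $F \in \overline{\mcO_{c(M)}}$ is a smooth point of the scheme of $(\dd,\rr)$-dimensional $B$-modules. By Lemma \ref{l:Fstable} every stable $F$ embeds as a subfunctor of $c(I)$ for some injective $A$-module $I$, giving a short exact sequence $0 \to F \to c(I) \to H \to 0$ in $\HKmod$. Since $B$ has global dimension at most $2$ by Lemma \ref{gldim2}, we have $\pdim H \le 2$, so Lemma \ref{Ext2} yields $\Ext^2_B(F,F) = 0$, and Lemma \ref{Ext2smooth} gives scheme-theoretic smoothness at $F$. Next, since $c(M)$ is rigid by Theorem \ref{rigid}, its $\Gl_{(\dd,\rr)}$-orbit is open in the irreducible component of $\RdrB$ containing it, so $\overline{\mcO_{c(M)}}$ is precisely an irreducible component. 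A smooth scheme-theoretic point lies in exactly one irreducible component of the underlying variety and is therefore a smooth point of the reduced component. Combined with the fact that $\Glr$ acts on the stable locus freely modulo the trivially-acting scalars (visible from the deframing in which the diagonal $K^*$ in $\Gl_{(1,\rr)}$ acts trivially on $\RrQinfty$), the geometric quotient $(\overline{\mcO_{c(M)}} \cap \RdrB^s)/\Glr$ is smooth.

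For birationality, I would show $\varphi$ is an isomorphism over the open dense orbit $\mcO_M \subset \overline{\mcO_M}$. For any stable $F \in \overline{\mcO_{c(M)}}$ with $e(F) \cong M$, Lemma \ref{stable} provides a monomorphism $c(M) = c e(F) \hookrightarrow F$, and since $F \in \overline{\mcO_{c(M)}}$ forces $\Dim F = \Dim c(M)$, this monomorphism is an isomorphism. Hence the $e$-preimage of $\mcO_M$ inside $\overline{\mcO_{c(M)}} \cap \RdrB^s$ is exactly one $\Gl_{(\dd,\rr)}$-orbit; after quotienting by $\Glr$ this maps $\Gd$-equivariantly and bijectively onto $\mcO_M$, hence isomorphically, so $\varphi$ is birational.

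I expect the main obstacle to be the bookkeeping around smoothness: passing from scheme-theoretic smoothness of the representation scheme at a stable $F$ (via the $\Ext^2$-vanishing coming from Lemmas \ref{l:Fstable}, \ref{Ext2} and \ref{Ext2smooth}) to genuine smoothness of the reduced irreducible component $\overline{\mcO_{c(M)}}$, and then checking that the geometric quotient by $\Glr$ inherits this smoothness. The rigidity of $c(M)$ from Theorem \ref{rigid}, which forces $\overline{\mcO_{c(M)}}$ to be an irreducible component of $\RdrB$, is the key structural ingredient linking the two.
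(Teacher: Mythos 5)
Your proof is correct and follows essentially the same strategy as the paper: Lemma \ref{Ext2} and Lemma \ref{Ext2smooth} give scheme-theoretic smoothness at stable points, the rigidity of $\cp(M)$ (Theorem \ref{rigid}) forces $\overline{\mcO_{\cp(M)}}$ to be an irreducible component, and identifying the preimage of $\mcO_M$ with the single orbit $\mcO_{\cp(M)}$ yields birationality. The only mild variation is in the birationality step, where you use stability of $F$ and Lemma \ref{stable} to produce a monomorphism $\cp e(F)\hookrightarrow F$ and then compare dimensions, whereas the paper invokes the subquotient statement in Lemma \ref{IntExt}; both give $F\cong \cp(M)$ and the rest is the same.
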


\begin{proof}
We claim that $\RdrB^{s}/\Glr$ is smooth. Since the global dimension of $B$ is at most $2$, we can use Lemma~\ref{Ext2} to see that the second self-extension group of every stable point vanishes. By Lemma~\ref{Ext2smooth}, we get that this is a smooth point in $\RdrB$, therefore the open subvariety 
 $\RdrB^{s}$ is smooth. It follows that $\RdrB^{s}/ \Glr$ is smooth because $\RdrB^{s}\to \RdrB^{s}/ \Glr$ is a principal bundle. Now, $(\overline{\mcO_{c(M)}}\cap \RdrB^{s}) / \Glr$ is a connected component of it, since $\overline{\mcO_{c(M)}}$ is an irreducible component of $\RdrB$ because $c(M)$ is rigid. 

We need to see that $\varphi$ is bijective over $\mcO_M$. The preimage of $\mcO_M$ under the map $\RdrB \to \RdrB\dbslash\Glr \cong \overline{\mcO_M}$ is the $\Gl_{(\dd, \rr) }$-orbit of $\cp (M)$. To see this, take a a point $F$ in the fibre over $M$. Then $\cp (M)$ is a subquotient of $F$ by 
Lemma~\ref{IntExt} which has the same dimension as $F$, therefore they are isomorphic. The restriction $\mcO_{\cp (M)} \to \mcO_{M}$ is a principal $\Glr$-bundle. By \cite[Theorem 5.25]{Bo} the geometric quotient exists and the map factors over an induced isomorphism 
$ \mcO_{\cp (M) } / \Glr \to \mcO_M $. But this coincides with the restriction of our map $\RdrB^{s}/\Glr \to \RdrB\dbslash\Glr \cong \overline{\mcO_M}$ to the preimage of $\mcO_M$. 
\end{proof}

For orbit closures of Dynkin quivers there already exists a resolution of singularities constructed by Reineke using the directedness of the Auslander-Reiten quiver of Dynkin quiver, see \cite{R}.

\section{Examples of projective quotient algebras}

In this section, we choose representation-finite algebras $A$ and describe the associated projective quotient algebras $B$.

\subsection{Hereditary algebras of finite representation type.} 
In this case $A=KQ$ with $Q$ Dynkin quiver (i.e. the underlying graph is Dynkin of type A,D or E).  
The homotopy category $\mcH$ is equivalent to the full subcategory of $\mcQ$ given by the objects 
$f\colon P\to X$ such that $X$ has no non-zero projective summand and so it is equivalent to the category $\mcH_Q$ of \cite{CIFR} and \cite{CFR}. 
Our algebra $B$ coincides with the algebra $B_Q$ from loc.\ cit.\ where they calculate its Ext-quiver and the relations from extensions between simple modules. 
Our results for this case are as loc.\ cit. The desingularization of orbit closures is not explicitly considered there, but other desingularizations are already known, see \cite{R}, and the
Hernandez-Leclerc construction \cite{HL} relates orbit closures to Nakajima quiver varieties, and in this context moduli spaces have previously been used to obtain desingularizations, 
cf.\ \cite[Theorem 3.2]{Sch}.

\subsection{Self-injective algebras of finite representation type.}
In this case the functor $\Ker$ from $\mcH$ to $A$-$\modu$ sending an object 
$f\colon P\to X$ to $\Ker (f)$ is an equivalence of categories. So, $B$ is isomorphic to the Auslander algebra $\Gamma $ of $A$. Moreover, we have a commuting diagram of functors 
\[
\xymatrix{
\text{$B$-$\Modu$} \ar[r]^{e}& \text{$A$-$\Modu$} \ar@{=}[d]\\
\text{$\Gamma$-$\Modu$} \ar[u]^{\widehat{\Ker} }\ar[r]^{\ep}& \text{$A$-$\Modu$} 
}
\] 
where $\ep=\sum_{i=1}^n e_{[P_i]}$ is the idempotent in $\Gamma =\End_A(E )^{op}$ corresponding to the summand of $E$ corresponding to indecomposable projective modules.

\subsection{The truncated polynomial ring.}
Let $A=K[X]/(X^n)$. This algebra is representation-finite and self-injective. 
The indecomposables in $\mcQ$ are the objects 
$U_r:=(A\to K[X]/(X^r))$ with $0\leq r \leq n$ and the 
indecomposables in $\mcH$ are those with 
$r<n$. (But note that $\mcH$ is not equivalent to the full subcategory of $\mcQ$ containing these indecomposables.)
The Auslander algebra of $A$ (and therefore also $B$) has a unique structure as quasi-hereditary algebra. Explicitly, we can describe $B$ with the quiver 
\[
\xymatrix{
n-1 \ar@<1ex>[rr]^{p_{n-1}} & & n-2 \ar@<1ex>[ll]^{j_{n-1}} \ar@<1ex>[rr]^{p_{n-2}} &&\ar@<1ex>[ll]^{j_{n-2}}  \cdots\cdots  \ar@<1ex>[rr]^{p_3}&& 2 \ar@<1ex>[rr]^{p_{2}} \ar@<1ex>[ll]^{j_{3}} && 1 \ar@<1ex>[rr]^{p_{1}} \ar@<1ex>[ll]^{j_{1}} && [0] \ar@<1ex>[ll]^{j_{1}}
}
\]
and relations 
$p_rj_r = j_{r-1}p_{r-1}, \quad 0 < r-1 < n-1, \text{ and } p_{n-1} j_{n-1}=0$. The brackets $[-]$  indicate the idempotent $e$ such that $eBe=A$. The stable modules are given by the modules $F$ with 
$\Hom (U_r, F)=0$ for $1\leq r\leq n$, this means in $F$ all maps $p_*$ have to be monomorphisms. The costable modules are given by modules $H$ with $\Hom (H, U_r)=0$ for $1\leq r\leq n$, this means in $H$ all maps $j_*$ have to be epimorphisms. 
The stable modules coincide with the $\Delta$-filtered modules and the costable modules with the $\nabla$-filtered modules for the unique quasi-hereditary structure (see next example or e.g. \cite{BHRR}). In particular, 
the tilting module $C$ coincides with the characteristic tilting module. 

Our geometric construction of the orbit closures as affine quotient varieties coincides with the one from Kraft and Procesi in \cite[\S 3.3]{KP}. Their variety $Z$ equals our $\RdrB$ and the union of the stable and the costable locus is contained in their smooth variety $Z^0$. 

\subsection{The nilpotent oriented cycle.}

Let $Q$ be the quiver with vertices $\{1, \ldots , N\}$ identified with their residue classes in the additive group $\Z/N\Z$. 
For each vertex $i$, we have one arrow $x_{i}\colon i\to i+1$. Let $I$ be the ideal given by all path of length $n$ in $Q$. Then, the algebra $A=KQ/I$ is a  representation-finite, self-injective Nakayama algebra, for $N=1$ we reobtain the previous example. We denote by $E_{i}[r]$ the indecomposable $A$-module with top $S_i$ of dimension $r$, $i\in \Z/N\Z, \; r\in \{1, \ldots , n\}$. If we set $E_i[0]:=0$, the indecomposable objects in $\mcH$ are 
$U_{i,r}:= (E_i[n]\to E_i[r])$, $i\in \Z/N\Z$, $0\leq r\leq n-1$. The algebra $B$ can be described by the quiver with vertices $(i,r)$, $i\in \Z/N\Z$, $0\leq r\leq n-1$ and arrows and relations see below (for $N=3, n=4$ with identification of the left and right boundary) 
\[
\xymatrix{
(2,4)\ar[dr]^p && (3,4)\ar[dr]^p\ar@{.}[ll]&& (1,4) \ar[dr]^p\ar@{.}[ll]&& (2,4)\ar@{.}[ll] \\
& (3,3)\ar[dr]^p \ar[ur]^j\ar@{.}[l] && (1,3)\ar[ur]^j \ar[dr]^p\ar@{.}[ll] && (2,3)\ar[ur]^j \ar[dr]^p\ar@{.}[ll] & \ar@{.}[l]\\
(3,2)\ar[ur]^j\ar[dr]^p && (1,2)\ar[ur]^j\ar[dr]^p\ar@{.}[ll] && (2,2)\ar[ur]^j\ar[dr]^p\ar@{.}[ll]&& (3,2) \ar@{.}[ll] \\
& (1,1)\ar[ur]^j\ar[dr]^p\ar@{.}[l]&& (2,1)\ar[ur]^j\ar[dr]^p\ar@{.}[ll] && (3,1)\ar[ur]^j\ar[dr]^p\ar@{.}[ll] &\ar@{.}[l] \\
[(1,0)]\ar[ur]^j && [(2,0)] \ar[ur]^j&& [(3,0)]\ar[ur]^j && [(1,0)]
}
\]
the $[-]$ indicates the idempotent $e$ with $eBe=A$. The stable modules are given by the condition that all maps $p$ are monomorphisms and the costable modules by the condition that all maps $j$ are epimorphisms. There is a quasi-hereditary structure on $B$ such that the stable modules are 
the $\Delta$-filtered and the costables are the $\nabla$-filtered, this can e.g. be obtained by the following ordering of the vertices in the quiver above: Take any total ordering refining $(i,r)<(j,t)$ for all $r<t$ and use the convention 
$\Delta (\la ) =P(\la ) /\sum_{\mu >\la } \Bild \left( P(\mu ) \to P(\la )\right)$, 
$\nabla (\la ) = \bigcap_{\mu >\la } \Ker \left(Q(\la )\to Q(\mu )\right)$ where $Q(\la )$ is the injective hull of the simple supported at vertex $\la $.  

In this case the orbit closures are normal, desingularizations are known (by adapting the construction of \cite{R}) and they are dense subvarieties of affine Schubert varieties of type $A$, see \cite[Proposition 1.1 and 6.2]{Sc}.

\subsection{The commuting square.} Let $A$ be given by the following quiver with 
relation 
\[ 
\xymatrix{
2 \ar[d] & 1 \ar@{.}[dl]\ar[l]\ar[d] \\
4 & 3\ar[l]
}.
\]
This algebra is of finite representation type, tilted of type $D_4$. But the homotopy category $\mcH$ is not a full subcategory of $\mcQ$. We write $[U]$ for the indecomposable object $P_U\to U$ with $U$ indecomposable non-projective and $[i]$ for the indecomposable $P_i\to 0$ in $\mcH$. 
The algebra $B$ is given by the following quiver with relations. 
\[
\xymatrix{
&&& [2]\ar[dl] &&&  \\
&& \left[ S_2 \right] \ar[dl] && 
 \left[ {\begin{smallmatrix} 0 & 1 \\ 0 & 1 \end{smallmatrix}} \right]\ar[ul] \ar[dl]\ar@{.}[ll] &&  \\
[4] & \left[ \tau^{-1} S_4 \right] \ar[l] &&  \left[ {\begin{smallmatrix} 1&1\\ 0&1 \end{smallmatrix}} \right] \ar[ul]\ar[dl]\ar@{.}[ll]&& \left[ S_1\right] \ar[ul]\ar[dl]\ar@{.}[ll]& [1] \ar[l]\\
&& \left[ S_3\right] \ar[ul]&& \left[ {\begin{smallmatrix} 1&1\\ 0&0 \end{smallmatrix}} \right]\ar[ul]\ar[dl] \ar@{.}[ll] && \\
&&& [3] \ar[ul]&&& }
\]
Here $e= e_{[1]}+e_{[2]}+e_{[3]}+ e_{[4]}$ is the idempotent such that $eBe=A$. The intermediate extensions of the indecomposables are given as follows: $\cp (S_i) =S_{[i]}$ for $1\leq i\leq 4$, and 
\[
\cp \left(\begin{smallmatrix} 1 & 0 \\ 1 & 0 \end{smallmatrix} \right)
=\begin{smallmatrix}
&&& 1 &&&\\
&& 1 && 0 && \\
1 & 1 && 0 && 0 & 0 \\ 
&& 0 && 0 && \\
&&& 0 &&&
\end{smallmatrix}, \quad 
\cp \left(\begin{smallmatrix} 0 & 1 \\ 0 & 1 \end{smallmatrix} \right)
=\begin{smallmatrix}
&&& 0 &&&\\
&& 0 && 0 && \\
0 & 0 && 0 && 1 & 1 \\ 
&& 0 && 1 && \\
&&& 1 &&&
\end{smallmatrix}, \quad 
\cp \left(\begin{smallmatrix} 1 & 0 \\ 1 & 1 \end{smallmatrix} \right)
=\begin{smallmatrix}
&&& 1 &&&\\
&& 1 && 0 && \\
1 & 1 && 0 && 0 & 0 \\ 
&& 1 && 0 && \\
&&& 1 &&&
\end{smallmatrix}, \quad 
\cp \left(\begin{smallmatrix} 1 & 1 \\ 1 & 1 \end{smallmatrix} \right)
=\begin{smallmatrix}
&&& 1 &&&\\
&& 1 && 1 && \\
1 & 1 && 1 && 1 & 1 \\ 
&& 1 && 1 && \\
&&& 1 &&&
\end{smallmatrix} 
\]
and similarly for $\begin{smallmatrix} 1 & 1 \\ 0 & 0 \end{smallmatrix} , \begin{smallmatrix} 0 & 0 \\ 1 & 1 \end{smallmatrix}, \begin{smallmatrix} 1 & 0 \\ 1 & 0 \end{smallmatrix}$ and $\begin{smallmatrix} 1 & 1 \\ 0 & 1 \end{smallmatrix} $.  
Another desingularization for the quiver Grassmannians (for algebras  iterated tilted of Dynkin type) has been found in \cite{KS2} where they discuss the above example. Their construction uses the repetitive algebra of a Dynkin quiver of type $D_4$.

\addcontentsline{toc}{section}{\textbf{References} \hfill}

\bibliographystyle{alphadin}
\bibliography{qgocrepfinite}

\begin{thebibliography}{CIFR13b}

% this bibliography is generated by alphadin.bst [8.2] from 2005-12-21

\providecommand{\url}[1]{\texttt{#1}}
\expandafter\ifx\csname urlstyle\endcsname\relax
  \providecommand{\doi}[1]{doi: #1}\else
  \providecommand{\doi}{doi: \begingroup \urlstyle{rm}\Url}\fi

\bibitem[AS81]{AS}
\textsc{Auslander}, M. ; \textsc{Smal{\o}}, S.~O.:
\newblock Almost split sequences in subcategories.
\newblock {In: }\emph{J. Algebra} 69 (1981), Nr. 2, S. 426--454

\bibitem[Aus74]{AusII}
\textsc{Auslander}, M.:
\newblock Representation theory of {A}rtin algebras. {II}.
\newblock {In: }\emph{Comm. Algebra} 1 (1974), S. 269--310

\bibitem[BB02]{BB}
\textsc{Bia{\l}ynicki-Birula}, A.:
\newblock Quotients by actions of groups.
\newblock {In: }\emph{Algebraic quotients. {T}orus actions and cohomology.
  {T}he adjoint representation and the adjoint action} Bd. 131.
\newblock Springer, Berlin, 2002, S. 1--82

\bibitem[BBD82]{BBD}
\textsc{Be{\u\i}linson}, A.~A. ; \textsc{Bernstein}, J.  ; \textsc{Deligne},
  P.:
\newblock Faisceaux pervers.
\newblock {In: }\emph{Analysis and topology on singular spaces, {I} ({L}uminy,
  1981)} Bd. 100.
\newblock Soc. Math. France, Paris, 1982, S. 5--171

\bibitem[BHRR99]{BHRR}
\textsc{Br{\"u}stle}, T. ; \textsc{Hille}, L. ; \textsc{Ringel}, C.~M.  ;
  \textsc{R{\"o}hrle}, G.:
\newblock The {$\Delta$}-filtered modules without self-extensions for the
  {A}uslander algebra of {$k[T]/\langle T^n\rangle$}.
\newblock {In: }\emph{Algebr. Represent. Theory} 2 (1999), Nr. 3, S. 295--312

\bibitem[Bon98]{Bo}
\textsc{Bongartz}, K.:
\newblock Some geometric aspects of representation theory.
\newblock {In: }\emph{Canadian Mathematical Society, Conference Proceedings} 23
  (1998), S. 1--27

\bibitem[CB01]{CBmm}
\textsc{Crawley-Boevey}, W.:
\newblock Geometry of the moment map for representations of quivers.
\newblock {In: }\emph{Compositio Math.} 126 (2001), Nr. 3, S. 257--293

\bibitem[CIFR13a]{CIFR}
\textsc{Cerulli~Irelli}, G. ; \textsc{Feigin}, E.  ; \textsc{Reineke}, M.:
\newblock Desingularization of quiver {G}rassmannians for {D}ynkin quivers.
\newblock {In: }\emph{Adv. Math.} 245 (2013), S. 182--207

\bibitem[CIFR13b]{CFR}
\textsc{Cerulli~Irelli}, G. ; \textsc{Feigin}, E.  ; \textsc{Reineke}, M.:
\newblock \emph{Homological approach to the Hernandez-Leclerc construction and
  quiver varieties}.
\newblock Feb 2013. --
\newblock arXiv:1302.5297v1

\bibitem[FP04]{FP}
\textsc{Franjou}, V. ; \textsc{Pirashvili}, T.:
\newblock Comparison of abelian categories recollements.
\newblock {In: }\emph{Doc. Math.} 9 (2004), S. 41--56 (electronic)

\bibitem[HL15]{HL}
\textsc{Hernandez}, D. ; \textsc{Leclerc}, B.:
\newblock Quantum {G}rothendieck rings and derived {H}all algebras.
\newblock {In: }\emph{J. Reine Angew. Math.} 701 (2015), S. 77--126

\bibitem[Kin94]{Ki}
\textsc{King}, A.~D.:
\newblock Moduli of representations of finite-dimensional algebras.
\newblock {In: }\emph{Quart. J. Math. Oxford Ser. (2)} 45 (1994), Nr. 180, S.
  515--530

\bibitem[KP79]{KP}
\textsc{Kraft}, H. ; \textsc{Procesi}, C.:
\newblock Closures of conjugacy classes of matrices are normal.
\newblock {In: }\emph{Invent. Math.} 53 (1979), Nr. 3, S. 227--247

\bibitem[KS13]{KS}
\textsc{Keller}, B. ; \textsc{Scherotzke}, S.:
\newblock \emph{Graded quiver varieties and derived categories}.
\newblock Mar 2013. --
\newblock arXiv:1303.2318

\bibitem[KS14]{KS2}
\textsc{Keller}, B. ; \textsc{Scherotzke}, S.:
\newblock Desingularizations of quiver {G}rassmannians via graded quiver
  varieties.
\newblock {In: }\emph{Adv. Math.} 256 (2014), S. 318--347

\bibitem[Kuh94]{Ku}
\textsc{Kuhn}, N.~J.:
\newblock Generic representations of the finite general linear groups and the
  {S}teenrod algebra. {II}.
\newblock {In: }\emph{$K$-Theory} 8 (1994), Nr. 4, S. 395--428

\bibitem[LBP90]{LP}
\textsc{Le~Bruyn}, L. ; \textsc{Procesi}, C.:
\newblock Semisimple representations of quivers.
\newblock {In: }\emph{Trans. Amer. Math. Soc.} 317 (1990), Nr. 2, S. 585--598

\bibitem[PV14]{PV}
\textsc{Psaroudakis}, C. ; \textsc{Vit{\'o}ria}, J.:
\newblock Recollements of module categories.
\newblock {In: }\emph{Appl. Categ. Structures} 22 (2014), Nr. 4, S. 579--593

\bibitem[Rei03]{R}
\textsc{Reineke}, M.:
\newblock Quivers, desingularizations and canonical bases.
\newblock {In: }\emph{Studies in memory of {I}ssai {S}chur
  ({C}hevaleret/{R}ehovot, 2000)} Bd. 210.
\newblock Boston, MA : Birkh{\"a}user Boston, 2003, S. 325--344

\bibitem[Sch04]{Sc}
\textsc{Schiffmann}, O.:
\newblock Quivers of type {$A$}, flag varieties and representation theory.
\newblock {In: }\emph{Representations of finite dimensional algebras and
  related topics in {L}ie theory and geometry} Bd.~40.
\newblock Amer. Math. Soc., Providence, RI, 2004, S. 453--479

\bibitem[Sch15]{Sch}
\textsc{Scherotzke}, S.:
\newblock \emph{Quiver varieties and self-injective algebras}.
\newblock Mar 2015. --
\newblock arXiv:1405.4729v3

\bibitem[{Sta}15]{stacks-project}
\textsc{{Stacks Project Authors}}, The:
\newblock \emph{\itshape Stacks Project}.
\newblock \url{http://stacks.math.columbia.edu}, 2015

\bibitem[Wei94]{Weib}
\textsc{Weibel}, C.~A.:
\newblock \emph{Cambridge Studies in Advanced Mathematics}. Bd.~38: {\emph{An
  introduction to homological algebra}}.
\newblock Cambridge University Press, Cambridge, 1994. --
\newblock  xiv+450 S.

\bibitem[Zwa99]{Zw5}
\textsc{Zwara}, G.:
\newblock Degenerations for modules over representation-finite algebras.
\newblock {In: }\emph{Proc. Amer. Math. Soc.} 127 (1999), Nr. 5, S. 1313--1322

\end{thebibliography}

\end{document}